\date{\today}
\theoremstyle{plain}
 \newtheorem{theorem}{Theorem}[section]
 \newtheorem{proposition}[theorem]{Proposition}
 \newtheorem{fact}[theorem]{Fact}
 \newtheorem*{fact*}{Fact}
 \newtheorem{lemma}[theorem]{Lemma}
 \newtheorem{corollary}[theorem]{Corollary}
 \theoremstyle{remark}
 \newtheorem{definition}[theorem]{Definition}
 \newtheorem{remark}[theorem]{Remark}
 \newtheorem{example}[theorem]{Example}
\numberwithin{equation}{section}
\newcommand{\CC}{\Psi}
\newcommand{\I}{g}
\renewcommand{\t}{t}
\newcommand{\RR}{U}
\newcommand{\dR}{V}
\newcommand{\ivar}{\zeta}
\newcommand{\R}{\boldsymbol{R}}
\newcommand{\ga}{\gamma}
\newcommand{\vect}[1]{\boldsymbol{#1}}
\newcommand{\rank}{\operatorname{rank}}
\begin{document}
\title{Bour's theorem for helicoidal surfaces with singularities}

\author{
Yuki Hattori\\
\vspace{2mm}
{\it\small Graduate School of Engineering Science, Yokohama National University,}
\vspace{-2mm}\\
{\small\it Hodogaya, Yokohama 240-8501, Japan}\\
{\small\tt hattori-yuki-by@ynu.jp}\\
{\small https://orcid.org/0009-0003-4934-4365}
\vspace{6mm}\\
Atsufumi Honda\footnote{Corresponding author.}\\
\vspace{2mm}
{\it\small Graduate School of Engineering Science, Yokohama National University,}
\vspace{-2mm}\\
{\it\small Hodogaya, Yokohama 240-8501, Japan}\\
{\small\tt honda-atsufumi-kp@ynu.ac.jp}\\
{\small https://orcid.org/0000-0003-0515-2866}
\vspace{6mm}\\
Tatsuya Morimoto\\
\vspace{2mm}
{\small\it College of Engineering Science, Yokohama National University,}
\vspace{-2mm}\\
{\it\small Hodogaya, Yokohama 240-8501, Japan}\\
{\small {\it Current address}: Froebel-Kan Co.,Ltd, 
   Honkomagome, Tokyo 113-8611, Japan}\\
{\small\tt morimoto.tatsuya.7@gmail.com}
}


\maketitle

\begin{abstract}
In this paper, 
generalizing the techniques of 
Bour's theorem, 
we prove that
every generic cuspidal edge,
more generally, generic $n$-type edge,
which is invariant
under a helicoidal motion in Euclidean $3$-space
admits non-trivial isometric deformations.
As a corollary,
several geometric invariants, 
such as 
the limiting normal curvature,
the cusp-directional torsion,
the higher order cuspidal curvature
and the bias, 
are proved to be extrinsic invariants.
\end{abstract}

\renewcommand{\thefootnote}{\fnsymbol{footnote}}
\footnote[0]{2020 {\it Mathematics Subject Classification.}
Primary 53A05; 
Secondary 57R45, 
53A04, 
53B25. 
} 
\footnote[0]{{\it Key Words and Phrases.} 
Bour's theorem,
Bour's lemma,
helicoidal surface,
wave front,
singular point.
} 

\section{Introduction}
A surface in the Euclidean $3$-space $\R^3$ is called 
a {\it helicoidal surface} 
if it is invariant under a helicoidal motion.
Here, a helicoidal motion is defined as 
a non-trivial one-parameter subgroup 
of the isometry group of $\R^3$.
Helicoidal surfaces are a generalization of surfaces of revolution.
In 1862, E.\ Bour proved that,
for a given helicoidal surface,
there exists a two-parameter family of 
helicoidal surfaces which are isometric to it
\cite[Theorem II, p.\ 82]{Bour}.
The procedure is as follows.
Let $f:\Sigma \to \R^3$ be a helicoidal surface.
First, 
Bour proved the existence of a special orthogonal coordinate
system $(s,t)$, called {\it natural parameters},
on a source $2$-manifold $\Sigma$
such that the families of $s$-coordinate curves are geodesic on $\Sigma$
parametrized by arc length,
and the families of the images of $t$-coordinate curves 
are the trajectories of the helicoidal motion.
In this natural coordinate system $(s,t)$, 
the first fundamental form $g$ is written as
\begin{equation}\label{eq:metric-Bour}\tag{B$_0$}
  g=ds^2+U(s)^2 \,dt^2,
\end{equation} 
where $U(s)$ is a positive smooth function.
Conversely, 
for given natural parameters $(s,t)$ of $\Sigma$
and a positive smooth function $U(s)$,
Bour determined a $2$-parameter family of 
helicoidal surfaces whose first fundamental form 
coincides with \eqref{eq:metric-Bour}.
See also \cite[pp. 129--130]{Darboux}.
Do Carmo and Dajczer used Bour's theorem 
to obtain the integral representation of helicoidal surfaces of 
constant mean curvature \cite{doCarmo-Dajczer}.

Bour's theorem has been shown to hold 
even if the ambient space is a more general 
Riemannian or Lorentzian $3$-manifold;
The Lorentz-Minkowski space $\R^3_1$
by Sasahara \cite{Sasahara}, 
Ji-Kim \cite{Ji-Kim}
and the second author \cite{Honda};
The product spaces $S^2\times \R$, $H^2\times \R$ 
and the Heisenberg group ${\rm Nil}_3$ by Sa Earp \cite{SaEarp}.
Sa Earp--Toubiana \cite{SaEarp-Toubiana}
generalized the results in \cite{SaEarp};
The Bianchi-Cartan-Vranceanu (BCV) space
by Caddeo--Onnis--Piu \cite{COP}.
Here, the BCV space
is a Riemannian $3$-manifold with $4$-dimensional isometry group,
together with simply connected space forms of 
non-negative sectional curvature.
Recently, Domingos--Onnis--Piu \cite{DOP}
generalized Bour's theorem 
for surfaces that are invariant under the action of 
a one-parameter group of isometries of 
a Riemannian $3$-manifold.

In this paper, we give an intrinsic generalization of Bour's theorem.
Our first fundamental form $g$ is written as
\begin{equation}\label{eq:metric-Bour-k}\tag{B$_k$}
  g=s^{2k}\,ds^2+U(s)^2 \,dt^2,
\end{equation} 
where $k$ is a positive integer
and $d^{i}\RR/ds^{i}(0)=0$ holds for 
all $i$ with $1\le i \le k$.
Hence, we deal with helicoidal surfaces with 
{\it singularities}.
More precisely, 
let $f : \Sigma\to \R^3$
be a smooth map 
defined on a smooth $2$-manifold $\Sigma$.
A point $p\in \Sigma$ is called a \emph{singular point\/} 
where $f$ is not an immersion.
We set a map $f_C : \R^2\to \R^3$ by
$f_C(u,v):=(u^2,u^3,v)$,
which is called the {\it standard cuspidal edge\/}
(see Figure \ref{fig:CE}, left).
A singular point $p\in \Sigma$ of a smooth map
$f : \Sigma\to \R^3$ is said to be 
{\it cuspidal edge}, if 
the map-germ $f : (\Sigma,p)\to (\R^3,f(p))$
is $\mathcal{A}$-equivalent to 
the map-germ $f_C : (\R^2,0)\to (\R^3,0)$.
Here, the $\mathcal{A}$-equivalence relation is defined as follows.
Let $N$ be a positive integer.
For $i=1,2$, 
let $f_i : \Sigma_i\to \R^N$ be smooth maps,
where $\Sigma_i$ $(i=1,2)$
are smooth manifolds.
For points $p_i\in \Sigma_i$ $(i=1,2)$,
we set $q_i=f_i(p_i)$.
Two map-germs 
$f_i : (\Sigma_i,p_i)\to (\R^N,q_i)$
are said to be {\it $\mathcal{A}$-equivalent\/}
if there exist diffeomorphism-germs
$\varphi : (\Sigma_1,p_1) \to (\Sigma_2,p_2)$ and
$\Phi : (\R^N,q_1) \to (\R^N,q_2)$
such that $f_2=\Phi\circ f_1\circ \varphi^{-1}$.

\begin{figure}[htb]
\begin{center}
 \begin{tabular}{{c@{\hspace{1cm}}c}}
  \resizebox{4.5cm}{!}{\includegraphics{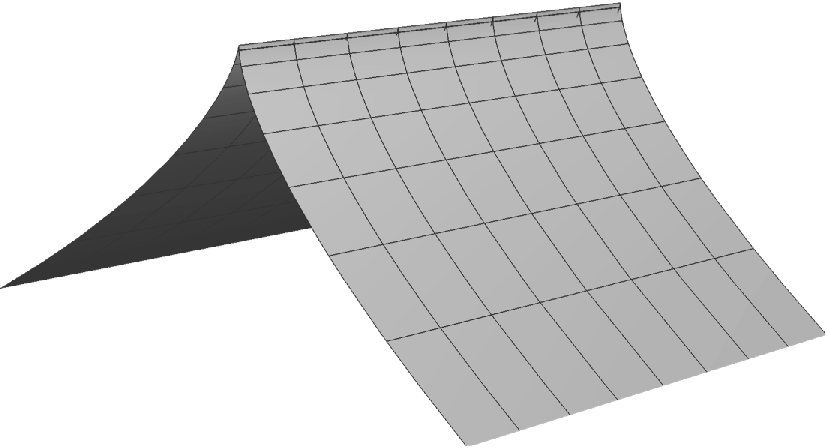}}&
  \resizebox{3.5cm}{!}{\includegraphics{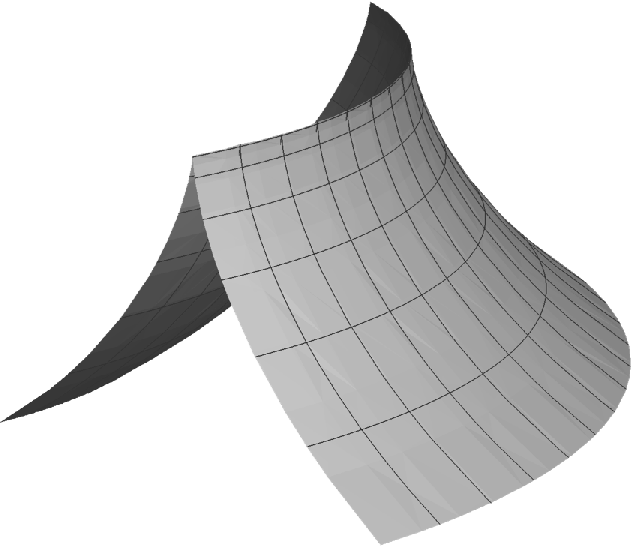}} 
 \end{tabular}
\end{center}
\caption{Left: the image of
the standard cuspidal edge $f_C$.
Right: the image of
a generic cuspidal edge
(cf.\ Example \ref{ex:GHF}).
}
\label{fig:CE}
\end{figure}

In \cite{SUY-ann},
the singular curvature $\kappa_s$ 
and 
the limiting normal curvature $\kappa_\nu$
for cuspidal edge singular points
are introduced (cf.\ Definition \ref{def:generic}).
These invariants play important roles
when we investigate cuspidal edges
from the differential geometric view point.
For example, 
the sign of $\kappa_s$ affects
the shape \cite[Theorem 1.17]{SUY-ann}.
Moreover, 
$\kappa_s$ appears in the remainder term
of the Gauss-Bonnet type formula
\cite{Kos2, SUY-ann, SUY-kyushu, SUY4}.
The singular curvature $\kappa_s$
is known to be an {\it intrinsic invariant}
\cite[Proposition 1.8]{SUY-ann},
namely, $\kappa_s$ can be expressed in terms of 
the first fundamental form.
The limiting normal curvature
$\kappa_{\nu}$
is closely related to the behavior of 
the Gaussian curvature $K$
(\cite[Theorem 3.1]{SUY-ann}, \cite[Theorem 3.9]{MSUY}).

Unlike the singular curvature,
it is proved in \cite[Corollary B]{NUY}
that the limiting normal curvature
$\kappa_{\nu}$ is an {\it extrinsic invariant}.
More precisely,
Naokawa--Umehara--Yamada \cite{NUY}
extended the classical Janet--Cartan's 
local isometric embedding theorem 
to cuspidal edges, 
which yields isometric deformations of 
real analytic `generic' cuspidal edges
that change $\kappa_{\nu}$.
Here, a {\it generic cuspidal edge} is 
a cuspidal edge with non-vanishing 
limiting normal curvature, that is, $\kappa_{\nu}\ne0$
\cite{SUY-ann},
see also Definition \ref{def:generic}.
See also \cite{Kos, HNUY}
(cf.\ \cite{HHNSUY}).
The number of congruence classes
of germs of real analytic generic cuspidal edges
having the common first fundamental form
and the common singular value set
is determined in \cite{HNSUY1} 
(cf.\ Remark \ref{rem:isomer}).
See also \cite{HNSUY2}.
These techniques used in cuspidal edges
are also applied to curved foldings
\cite{HNSUY3, HNSUY4}
(cf.\ \cite{Fuchs-Tabach}).

Let $n$ be a positive integer.
As a generalization of cuspidal edge,
Martins--Saji--Santos--Teramoto \cite{MSST}
introduced a class of singular points called 
{\it $n$-type edge\/}
whose germ is $\mathcal{A}$-equivalent to 
$
(u,v)\mapsto (u^n,u^{n+1}a(u,v),v)
$
for a smooth function $a(u,v)$.
When $n=2$, 
we remark that
$2$-type edge singular points
are just generalized cuspidal edge singular points
\cite{HNSUY1}.

In this paper, we prove 
a Bour-type theorem
for generic helicoidal $n$-type edges
(Theorems \ref{thm:CE-3} and \ref{thm:singular-Bour}).
Here, a {\it generic helicoidal $n$-type edge\/}
is an $n$-type edge
which is invariant under a helicoidal motion
of $\R^3$ and $\kappa_\nu\ne0$
(Definition \ref{def:generic-helicoidal}).
As a corollary,
we obtain a Bour-type theorem
for generic helicoidal $r/n$-cuspidal edges
(Corollaries \ref{cor:Bour-CE} and \ref{cor:Bour-45CE}).
In our theorem, 
the first fundamental form coincides with 
\eqref{eq:metric-Bour-k}
for $n=k+1$.
We remark that 
$s$ is not an arc length parameter,
but the `canonical parameter' at cusps
(Shiba--Umehara \cite{su}, see also \cite{MS2, fukui2},
cf.\ Fact \ref{fact:PBAL-riem}).
We also remark that our Bour-type theorem
do not need the real analyticity.
In \cite{MSST},
several geometric invariants 
of $n$-type edge are introduced,
such as
the {\it cusp-directional torsion\/} $\kappa_t$ 
(cf.\ \cite{MS}),
the {\it $(n,n+i)$-cuspidal curvature\/} $\omega_{n,n+i}$ 
(cf.\ \cite{MSUY}),
and the {\it $(n,2n)$-bias\/} $\beta_{n,2n}$
(cf.\ \cite{Honda-Saji}).
As a corollary of our Bour-type theorem
(Theorems \ref{thm:CE-3} and \ref{thm:singular-Bour}),
we prove the extrinsicity of these invariants 
(Corollaries \ref{cor:nu-c} and \ref{cor:kappa-c}).

This paper is organized as follows.
In Section \ref{sec:wave-front},
we review some basic definitions
and known results
for curves or surfaces with singularities.
In Section \ref{sec:sing-Bour},
we prove a Bour-type theorem
for generic helicoidal $n$-type edges in $\R^3$
(Theorems \ref{thm:CE-3} and \ref{thm:singular-Bour}).
In Section \ref{sec:application},
we give applications of our Bour-type theorem.
In subsection \ref{sec:nr-ce},
we prove a Bour-type isometric deformation theorem
for generic helicoidal $r/n$-cuspidal edges
(Corollaries \ref{cor:Bour-CE} and \ref{cor:Bour-45CE}).
On the other hand, 
by using our Bour-type isometric deformation theorem, 
the existence of isometric deformations 
that do not preserve the singularity types is shown,
see Corollary \ref{cor:rigidity}.
In subsection \ref{sec:extrinsic},
we prove that several invariants, 
such as the limiting normal curvature $\kappa_\nu$,
the cusp-directional torsion $\kappa_t$,
the $(n,n+i)$-cuspidal curvature $\omega_{n,n+i}$,
and the $(n,2n)$-bias $\beta_{n,2n}$
of generic helicoidal $n$-type edges are extrinsic 
(Corollaries \ref{cor:nu-c} and \ref{cor:kappa-c}).
In Appendix \ref{sec:7/2}, 
we introduce a criterion for $7/2$-cusp
based on the work by Takagi \cite{Takagi}.

Throughout the paper, 
manifolds and maps are differentiable of class $C^\infty$ 
unless otherwise indicated.

\section{Preliminaries}
\label{sec:wave-front}

In this section, we review the fundamental properties 
of curves and surfaces with singularities 
in Euclidean $3$-space $\R^3$.

\subsection{Wave fronts}
\label{sec:prelim-CE}

Let $\Sigma$ be an oriented smooth $2$-manifold.
For a smooth map $f:\Sigma \to \R^3$,
we denote its singular set by $S(f)$.
In this paper, we say $f$ a {\it surface}
if the regular set $\Sigma \setminus S(f)$ is dence.
If $S(f)$ is an empty set,
$f$ is said to be a {\it regular surface}.
We denote by $S^2$ the unit sphere 
$$S^2:=\left\{ \vect{x}\in \R^3\,;\, \|\vect{x}\|=1 \right\}$$
in $\R^3$,
where $\|\vect{x}\|=\sqrt{\vect{x}\cdot \vect{x}}$ and 
the dot implies the canonical inner product of $\R^3$.
A surface $f:\Sigma \to \R^3$ is called a \emph{frontal} 
if, for each point $p\in \Sigma$,
there exist an open neighborhood $D$ of $p$
and a smooth map 
$\nu : D\rightarrow S^2$ 
such that 
$df_q(\vect{v})\cdot \nu(q)=0$
holds for all $\vect{v}\in T_q\Sigma$, $q\in D$.
The map $\nu$ is called
a \emph{unit normal vector field} along $f$.
If $L:=(f,\nu):D\to \R^3\times S^2$ is an immersion, 
$f$ is called a \emph{wave front} 
(or a \emph{front}, for short).
For a more detailed explanation
of the geometric properties of wave fronts or frontals, 
please refer to the book \cite{SUY-book}.

\subsection{Geometric invariants}
\label{sec:invs}
Let $f:\Sigma \to \R^3$ be a surface,
and take a point $p\in \Sigma$.
A real number $J(f,p)\in \R$
determined by $f$ and $p$
is said to be a {\it geometric invariant} of $f$ at $p$
if 
$
  \left|J(f,p) \right|
  =
  \left|J(T\circ f\circ \varphi^{-1},p)\right|
$
holds for any isometry $T$ of $\R^3$
and any diffeomorphism $\varphi:D\to D$
defined on a sufficiently small 
open neighborhood $D$ of $p$
such that $\varphi(p)=p$.

We denote by 
$\I$, or $\I_f$,
the {\it first fundamental form} of $f$.
Let $\bar{f} :\Sigma \to \R^3$
be a surface.
Then, we say 
{\it $\bar{f}$ is isometric to $f$ at $p$}
if there exist 
an open neighborhood $D$ of $p$
and a diffeomorphism $\varphi:D\to D$
such that 
$\varphi(p)=p$ and 
$\I_f = \varphi^*\I_{{\bar{f}}}$
holds on $D$.
A geometric invariant $J(f,p)$
is said to be an {\it intrinsic invariant}
if $$|J(f,p)|=|J(\bar{f},p)|$$ holds for 
any surface $\bar{f}$ which is isometric to $f$ at $p$.
A geometric invariant $J(f,p)$
is said to be an {\it extrinsic invariant}
if it is not an intrinsic invariant.
Namely, there exist an open neighborhood 
$D$ of $p$,
and a surface
$\bar{f}: D \to \R^3$
such that
$\bar{f}$ is isometric to $f$ at $p$,
but 
$|J(f,p)|\ne |J(\bar{f},p)|$ holds.


Let $f:\Sigma \to \R^3$ be a surface,
and fix a point $p\in \Sigma$.
A continuous $1$-parameter family of maps
$\{f^t:D\to \R^3 \}_{t\in [0,1]}$
defined on an open neighborhood $D$ of $p$
is called an {\it isometric deformation of $f$ at $p$},
if 
\begin{itemize}
\item $f^0=f$, and 
\item $f^t$ is isometric to $f$ at $p$ for each $t\in [0,1]$.
\end{itemize}
Let $\{\varphi^t\}_{t\in [0,1]}$
be a continuous $1$-parameter family of 
diffeomorphisms defined on $D$,
and $\{T^t\}_{t\in [0,1]}$ 
be a continuous $1$-parameter family of isometries of $\R^3$.
Suppose that both $\varphi^0$ and $T^0$ are the identity maps.
Then, 
$\{f^t\}_{t\in [0,1]}$
defined by
$f^t:=T^t\circ f \circ (\varphi^t)^{-1}$
is said to be a 
{\it trivial isometric deformation}.
By definition,
geometric invariants of surfaces
are preserved 
under trivial isometric deformations.

A singular point $p$ of $f$
is called a {\it rank one singular point}
if ${\rm rank}(df_p)=1$.

\begin{definition}[\cite{SUY-ann, MSUY}]
\label{def:generic}
Let $f:\Sigma\to\R^3$ be a frontal
and $p$ be a rank one singular point of $f$.
\begin{enumerate}
\item
A coordinate neighborhood $(D;u,v)$ of $p$
which is compatible with 
respect to the orientation of $\Sigma$
is said to be {\it adjusted} at $p$
if $f_u(p)=\vect{0}$.
\item
A real number $\kappa_\nu(p)$ defined by 
\begin{equation}\label{eq:LNC}
  \kappa_\nu(p)
  :=\frac{f_{vv}(p)\cdot \nu(p)}{\|f_v(p)\|^2}
\end{equation}
is called the {\it limiting normal curvature} at $p$.
\item
A rank one singular point with non-zero 
limiting normal curvature $\kappa_{\nu}\ne0$
is called a {\it generic rank one singular point}
(cf.\ \cite{SUY-ann}).
\end{enumerate}
\end{definition}

It is proved in \cite[Proposition 2.9]{MSUY}
that the definition of the limiting normal curvature 
$\kappa_\nu(p)$
does not depend on the choice of 
the coordinate system $(u,v)$ adjusted at $p$.
Hence, $\kappa_\nu(p)$ is a geometric invariant.
It is proved in \cite{SUY-ann} that
the Gaussian curvature $K$ diverges
at generic cuspidal edge singular points.

\subsection{Singular points of curves with finite multiplicities}
Let $I$ be an open interval and $N$ a positive integer.
If a smooth map
$\gamma:I\to \R^N$
admits only isolated singular points,
then we call $\gamma$ a {\it curve}.
Take a point $u_0\in I$.
For a positive integer $n$, 
a smooth curve $\gamma:I\to \R^N$ 
is said to be of {\it multiplicity $n$
at} $u_0$ if 
$$
  \gamma'(u_0)=\cdots = \gamma^{(n-1)}(u_0)=\vect{0},\qquad
  \gamma^{(n)}(u_0)\ne\vect{0}
$$
hold,
where $\gamma^{(i)}=d^i \gamma/d u^i$ for a positive integer $i$.
Such a point $u_0\in I$ is called a 
{\it singular point of multiplicity $n$}.
We say that $\gamma$ is of {\it finite multiplicity at} $u_0$ if 
$\gamma$ is of multiplicity $n$ at $u_0$ 
for some positive integer $n$.
By the Hadamard lemma (cf.\ \cite[Lemma 3.4]{BG}),
there exists a smooth map $\bar{\gamma}:I\to \R^N$
such that
$$
  \gamma(u) - \gamma(u_0) = (u-u_0)^n \bar{\gamma}(u)
  \qquad
  (\bar{\gamma}(u_0)\ne \vect{0}).
$$

The following gives 
the `canonical parameter'
for curves of finite multiplicity.

\begin{fact}[{\cite{su}, \cite{MS2}, \cite{fukui2}}]
\label{fact:PBAL-riem}
For a non-negative integer $k$,
we set $n=k+1$.
Let $\gamma(u)$ $(u\in I)$ be a smooth curve in $\R^N$,
and let $u_0\in I$ be a singular point of multiplicity $n$.
Then there exists a coordinate change
$s=s(u)$ of orientation preserving diffeomorphism such that 
$s(u_0)=0$ and
$$\left\|\frac{d\gamma}{ds}\right\|=|s|^{k}.$$
\end{fact}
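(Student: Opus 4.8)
The plan is to reduce the statement to solving a single scalar equation for $s$, and then to verify by an integral-substitution trick that its solution is a genuine smooth, orientation-preserving diffeomorphism. After translating so that $u_0=0$, I would start from the Hadamard factorization recorded just above the statement. Differentiating $\gamma(u)-\gamma(0)=u^{n}\bar\gamma(u)$ and using $n=k+1$ gives $\gamma'(u)=u^{k}\hat\gamma(u)$ with $\hat\gamma(u):=n\bar\gamma(u)+u\bar\gamma'(u)$, so that $\hat\gamma(0)=n\bar\gamma(0)\ne\vect 0$. Consequently $\|\gamma'(u)\|=|u|^{k}h(u)$, where $h:=\|\hat\gamma\|$ is smooth and strictly positive near $0$ (indeed $h(0)=n\|\bar\gamma(0)\|>0$). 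Writing $d\gamma/ds=\gamma'(u)/s'(u)$, the target identity $\|d\gamma/ds\|=|s|^{k}$ is equivalent to $|s|^{k}\,s'=|u|^{k}h(u)$ for an orientation-preserving change $s=s(u)$ with $s(0)=0$.

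Next I would integrate this relation. Since $\frac{d}{du}\bigl(s|s|^{k}\bigr)=(k+1)|s|^{k}s'$, the equation is equivalent to $s|s|^{k}=F(u)$, where $F(u):=(k+1)\int_{0}^{u}\|\gamma'(t)\|\,dt$. Thus the whole problem becomes: solve $s|s|^{k}=F(u)$ for $s$ as an orientation-preserving diffeomorphism. Because $s$ and $u$ must share the same sign near $0$ and $F(u)$ has the sign of $u$, this determines $s$ uniquely as $s=\sgn(u)\,|F(u)|^{1/(k+1)}$; the only genuine issue is smoothness at $u=0$, which I expect to be the main obstacle, since both $|u|^{k}$ and the $(k+1)$-st root are non-smooth there.

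To overcome this I would extract the singular factor from $F$ by the substitution $t=u\sigma$, $\sigma\in[0,1]$: this yields $F(u)=(k+1)\,u|u|^{k}R(u)$ with $R(u):=\int_{0}^{1}\sigma^{k}h(u\sigma)\,d\sigma$, which is smooth (differentiate under the integral sign) and positive, with $R(0)=h(0)/(k+1)>0$. Testing the ansatz $s=u\mu(u)$ with $\mu>0$ gives $s|s|^{k}=u|u|^{k}\mu^{k+1}$, so matching against $F$ forces $\mu(u)=\bigl[(k+1)R(u)\bigr]^{1/(k+1)}$, which is smooth and positive. Hence $s(u):=u\bigl[(k+1)R(u)\bigr]^{1/(k+1)}$ is smooth, satisfies $s(0)=0$ and $s'(0)=h(0)^{1/(k+1)}>0$, and therefore is an orientation-preserving local diffeomorphism solving $s|s|^{k}=F(u)$.

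Finally, differentiating the identity $s|s|^{k}=F(u)$ recovers $|s|^{k}s'=|u|^{k}h(u)=\|\gamma'(u)\|$, whence $\|d\gamma/ds\|=\|\gamma'(u)\|/s'(u)=|s|^{k}$, which is exactly the required conclusion. The conceptual content of the whole argument is concentrated in the substitution producing the factorization $F(u)=(k+1)\,u|u|^{k}R(u)$ with $R$ smooth: this is what converts the non-smooth root into the manifestly smooth expression $u\bigl[(k+1)R(u)\bigr]^{1/(k+1)}$, and I expect everything else to be routine verification.
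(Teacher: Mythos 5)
The paper does not prove this statement: it is recorded as a Fact with citations to Shiba--Umehara, Martins--Saji and Fukui, so there is no internal proof to compare against. Your argument is correct and complete, and it is essentially the standard proof from those references: reduce the condition $\|d\gamma/ds\|=|s|^k$ to the integrated equation $s|s|^k=(k+1)\int_0^u\|\gamma'(t)\|\,dt$, and then establish smoothness of the $(k+1)$-st root via the factorization $F(u)=(k+1)\,u|u|^k R(u)$ obtained from the substitution $t=u\sigma$, which is exactly the crux. All the individual steps check out, including the positivity and smoothness of $h=\|\hat\gamma\|$ near $u_0$ (since $\hat\gamma(u_0)=n\bar\gamma(u_0)\ne\vect{0}$), the smoothness of $R$ by differentiation under the integral sign, and $s'(0)=h(0)^{1/(k+1)}>0$, which guarantees the change of parameter is an orientation-preserving local diffeomorphism.
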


We remark that if $k=0$, 
the parameter $s$ is just the arc length parameter.
Fact \ref{fact:PBAL-riem} is proved in
\cite[Theorem 1.1 and Remark 2.2]{su}
in the case of order $2$ (i.e.\ $k=1$) in $\R^2$.
Such the assertion for 
curves of order $2$ in $\R^3$ 
is proved in \cite[Appendix A]{MS2}.
For curves with finite multiplicity in $\R^N$,
see \cite[Theorem 1.1]{fukui2}.

\subsection{Finite type edge singular points}
Let $n$ be a positive integer.
A smooth map $f : \Sigma \to \R^3$ 
has {\it $n$-type edge\/} at a point $p\in \Sigma$, if 
there exists a smooth function $a(u,v)$
such that
the map-germ $f:(\Sigma,p)\to(\R^3,f(p))$ is $\mathcal{A}$-equivalent
to the map-germ
$$
\R^2\ni (u,v)\longmapsto (u^n,u^{n+1}a(u,v),v)\in \R^3
$$ at the origin \cite{MSST}.
Then, $p$ is called an {\it $n$-type edge singular point\/}.
We say that $f$ has {\it finite type edge} at $p$ if 
$p$ is an $n$-type edge singular point
for some positive integer $n$.
We remark that
generalized cuspidal edge singular points,
introduced in \cite{HNSUY1},
are just $2$-type edge singular points.

If $p$ is a rank one singular point,
that is, $\rank (df_p)=1$,
there exists a non-vanishing vector field 
$\eta$ defined on a neighborhood $\tilde{D}$ of $p$
such that $df_q(\eta_q)=\vect{0}$ holds
for each $q\in S(f)\cap \tilde{D}$.
We call $\eta|_{S(f)}$ a {\it null vector field\/}, 
and $\eta$ an {\it extended null vector field\/}.
An extended null vector field is also called a null vector field
if it does not induce a confusion.

We assume that the set of singular points $S(f)$ is a regular curve,
and the tangent direction of $S(f)$ is not in $\ker(df_p)$.
Let $\xi$ be a vector field such that $\xi_p$ is a non-zero
tangent vector of $S(f)$ for $p\in S(f)$.
We consider the following conditions for $(\xi,\eta)$:
\begin{align}
\label{itm:cri1}
&\eta^{i}f=0\quad \text{holds on} ~ S(f) 
\qquad
(i=2,\dots,n-1),\\
\label{itm:cri2}
&\xi f(p)\quad \text{and} \quad
\eta^{n}f(p) \quad\text{are linearly independent.}
\end{align}
Here, 
for a vector field $\zeta$ and a map $f$,
the symbol
$\zeta^i f$ stands for the $i$-times directional derivative
of $f$ by $\zeta$.
Moreover, for a coordinate system $(u,v)$ and a map $f$,
the symbol $f_{v^i}$
stands for $\partial^if/\partial v^i$.

\begin{fact}
[{\cite[Proposition 2.2]{MSST}}]
\label{fact:MSST}
Let $f : \Sigma \to \R^3$ be a frontal
having a rank one singular point $p\in \Sigma$.
Suppose that the singular set $S(f)$ is a regular curve,
and the tangent direction of $S(f)$ generated by $\xi$ is not in 
$\ker(df_p)$.
Then, $f$ has $n$-type edge at $p$
if and only if there exists a null vector field $\eta$ satisfying
\eqref{itm:cri1} and \eqref{itm:cri2}.
\end{fact}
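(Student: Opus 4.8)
My plan is to establish both implications by reducing $f$ to the model map $f_0(u,v)=(u^n,u^{n+1}a(u,v),v)$ in suitably adapted coordinates; note that since $p$ is a singular point we have $n\ge 2$. \textbf{Setup.} Because $p$ is a rank one singular point and $S(f)$ is a regular curve whose tangent direction (spanned by $\xi$) is transverse to $\ker(df_p)$, I would first straighten the extended null vector field. Parametrizing $S(f)$ by $v\mapsto\sigma(v)$ with $\sigma(0)=p$ and setting $(u,v)\mapsto\Theta^\eta_u(\sigma(v))$, the time-$u$ flow of $\eta$, gives a local diffeomorphism (since $\eta_p$ and $\sigma'(0)$ are linearly independent) in which $S(f)=\{u=0\}$ and $\eta=\partial_u$, so that $\eta^i f=f_{u^i}$. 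The null condition gives $f_u(0,v)=\vect0$, and \eqref{itm:cri1} upgrades this to $f_{u^i}(0,v)=\vect0$ for $1\le i\le n-1$; by the Hadamard lemma one then gets a smooth map $\widehat f$ with $f(u,v)=f(0,v)+u^n\widehat f(u,v)$ and $\widehat f(0,v)=f_{u^n}(0,v)/n!$. Writing $\gamma(v):=f(0,v)$, condition \eqref{itm:cri2} says that $\gamma'(0)=f_v(0,0)$ and $\widehat f(0,0)$ are linearly independent, hence so are $\gamma'(v)$ and $\widehat f(0,v)$ for $v$ near $0$.

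For the \emph{if} direction I would straighten the target. Choosing a smooth field $b(v)$ completing $\{\gamma'(v),\widehat f(0,v),b(v)\}$ to a frame near $v=0$, the map $\Psi(x,y,z):=\gamma(z)+x\,\widehat f(0,z)+y\,b(z)$ is a local diffeomorphism, and composing $f$ with $\Phi:=\Psi^{-1}$ and Taylor expanding $\Psi^{-1}$ about $\gamma(v)$ along the increment $u^n\widehat f(u,v)$ gives $\Phi\circ f=(0,0,v)+u^n\,D\Psi^{-1}|_{\gamma(v)}\widehat f(u,v)+O(u^{2n})$. Since $D\Psi^{-1}|_{\gamma(v)}\widehat f(0,v)=e_1$, the second and third entries of the bracketed factor vanish at $u=0$, hence are divisible by $u$, and as $2n\ge n+1$ the remainder is absorbed, so $\Phi\circ f=(u^nX,\,u^{n+1}Y,\,v+u^{n+1}Z)$ with $X(0,0)=1$. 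Two source reparametrizations then finish the reduction: replacing $v$ by $V:=v+u^{n+1}Z$ makes the third entry exactly $V$; and, since $X>0$ near the origin, replacing $u$ by $\tilde u:=u\,X^{1/n}$ makes the first entry $\tilde u^n$ and turns the second into $\tilde u^{n+1}a(\tilde u,V)$ for a smooth $a$. Both changes preserve $\{u=0\}$, so this is precisely the $n$-type edge normal form.

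For the \emph{only if} direction I would compute the invariants for the model $f_0(u,v)=(u^n,u^{n+1}a,v)$, for which $S(f_0)=\{u=0\}$, $\eta_0=\partial_u$ is null, $\eta_0^i f_0=\vect0$ on $S(f_0)$ for $2\le i\le n-1$, and $\eta_0^n f_0(0,0)=(n!,0,0)$, $(f_0)_v(0,0)=(0,0,1)$ are linearly independent. It then remains to transport \eqref{itm:cri1} and \eqref{itm:cri2} along an $\A$-equivalence $f=\Phi\circ f_0\circ\varphi^{-1}$. Setting $\eta:=\varphi_*\eta_0$, I would check that $\eta$ is null, and that on $S(f)$ one has $\eta^i f=d\Phi\cdot\eta_0^i f_0$ modulo terms that are products involving some $\eta_0^j f_0$ with $1\le j<i\le n-1$, all of which vanish on $S(f_0)$; since $d\Phi$ is invertible, both the vanishing in \eqref{itm:cri1} and the independence in \eqref{itm:cri2} persist. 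For \eqref{itm:cri2} I would also note that any field $\xi$ tangent to $S(f)$ satisfies $\xi f(p)\parallel\varphi_*(\partial_v)f(p)$, so replacing $\partial_v$ by $\xi$ only rescales and does not affect linear independence.

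The main obstacle is the order-of-vanishing bookkeeping in the \emph{if} direction---confirming that after the target straightening the second and third components are genuinely divisible by $u^{n+1}$ (not merely $O(u^{n+1})$), which relies on the leading term being exactly $u^n e_1$---together with the inductive identity for $\eta^i f$ on $S(f)$ in the \emph{only if} direction, where one must verify that every correction term really is a product containing some $\eta_0^j f_0$ with $1\le j\le n-1$. The remaining points (that the flow-box and frame maps are diffeomorphism-germs, and that the reparametrizations preserve the singular set) are routine.
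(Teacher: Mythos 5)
This statement is quoted in the paper as a \emph{Fact} imported from \cite[Proposition 2.2]{MSST}; the paper itself contains no proof of it, so there is no in-paper argument to compare yours against. Judged on its own, your proof is correct and is essentially the standard normal-form argument one would expect the cited reference to use: flow-box coordinates adapted to the extended null field so that $\eta=\partial_u$ and $S(f)=\{u=0\}$, Hadamard division to write $f=\gamma(v)+u^n\widehat f(u,v)$, a target straightening built from the frame $\{\widehat f(0,v),b(v),\gamma'(v)\}$ for the ``if'' direction, and a Fa\`a di Bruno bookkeeping argument to transport \eqref{itm:cri1} and \eqref{itm:cri2} across an $\mathcal{A}$-equivalence for the ``only if'' direction. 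The two delicate points you flag are handled correctly: the Taylor remainder in the target straightening is $u^{2n}$ times a smooth map (integral form of the remainder), so with $2n\ge n+1$ it is absorbed into the $u^{n+1}$-terms, and the leading term being exactly $u^n e_1$ forces the second and third components to vanish to order $n+1$; and in the chain-rule expansion of $\eta_0^i(\Phi\circ f_0)$ every correction term contains a factor $\eta_0^j f_0$ with $1\le j\le i-1\le n-1$, which vanishes on $S(f_0)$ (this also covers the case $i=n$ needed for \eqref{itm:cri2}, even though your displayed inequality $1\le j<i\le n-1$ literally stops at $i\le n-1$). Two further remarks, neither a gap: the replacement of $\xi$ by $\partial_v$ and of $\eta$ by a rescaling is justified by Fact \ref{fact:MSST-h} and by the fact that $\xi|_{S(f)}$ is a nonvanishing multiple of $\partial_v|_{S(f)}$; and for $n=2$ condition \eqref{itm:cri1} is vacuous, which your argument accommodates automatically. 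Note also that your argument never invokes the frontal hypothesis---consistent with the fact that the normal form $(u^n,u^{n+1}a,v)$ is automatically a frontal---so your proof actually establishes the equivalence under slightly weaker hypotheses than stated.
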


See also \cite{KRSUY} for the criterion of cuspidal edge singular points.
The following lemma was used to prove Fact \ref{fact:MSST}.

\begin{fact}
[{\cite[Lemma 2.4]{MSST}}]
\label{fact:MSST-h}
Suppose that $f$ and $(\xi,\eta)$ satisfy 
the condition \eqref{itm:cri1}
$($respectively, \eqref{itm:cri1} and \eqref{itm:cri2}$)$.
For a non-zero function  $h$,
we set $\hat\eta=h\eta$.
Then
$f$ and $(\xi,\hat\eta)$ satisfy 
the condition \eqref{itm:cri1}
$($respectively, \eqref{itm:cri1} and \eqref{itm:cri2}$)$.
\end{fact}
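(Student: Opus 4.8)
The plan is to expand $\hat\eta^i f = (h\eta)^i f$ in terms of the iterated derivatives $\eta^j f$ and then track how the two hypotheses propagate. First I would establish, by induction on $i\ge 1$, the triangular expansion
$$
  \hat\eta^i f = \sum_{j=1}^{i} a_{i,j}\,\eta^j f,
$$
where each $a_{i,j}$ is a smooth function and the leading coefficient is $a_{i,i}=h^i$. The base case $i=1$ is immediate since $\hat\eta f = h\,\eta f$. For the inductive step, applying $\hat\eta=h\eta$ to the expansion and using the Leibniz rule $\eta(a_{i,j}\eta^j f)=(\eta a_{i,j})\eta^j f + a_{i,j}\eta^{j+1}f$ produces only terms $\eta^j f$ with $1\le j\le i+1$; the top term arises solely from $h\cdot a_{i,i}\,\eta^{i+1}f$, giving $a_{i+1,i+1}=h\cdot h^i = h^{i+1}$. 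This triangular structure, with its explicit leading coefficient, is the one fact the whole argument rests on.

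For the first assertion, suppose $(\xi,\eta)$ satisfies \eqref{itm:cri1}, so that $\eta^j f=0$ on $S(f)$ for $2\le j\le n-1$. Since $\eta$ is a null vector field, $\eta f = df(\eta)=\vect{0}$ holds on $S(f)$ as well. Hence for each $i$ with $2\le i\le n-1$, every summand in the expansion of $\hat\eta^i f$ vanishes on $S(f)$: the $j=1$ term because $\eta f=\vect{0}$ there, and the terms $2\le j\le i$ because $\eta^j f=\vect{0}$ there. Therefore $\hat\eta^i f=\vect{0}$ on $S(f)$ for all such $i$, which is precisely \eqref{itm:cri1} for $(\xi,\hat\eta)$.

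For the second assertion, assume in addition that $(\xi,\eta)$ satisfies \eqref{itm:cri2}. Evaluating the expansion for $i=n$ at the point $p\in S(f)$ and using $\eta f(p)=\vect{0}$ together with $\eta^j f(p)=\vect{0}$ for $2\le j\le n-1$, all lower-order terms drop out, leaving
$$
  \hat\eta^n f(p) = a_{n,n}(p)\,\eta^n f(p) = h(p)^n\,\eta^n f(p).
$$
Since $h(p)\ne 0$, we have $h(p)^n\ne 0$, so $\hat\eta^n f(p)$ is a nonzero scalar multiple of $\eta^n f(p)$. Because $\xi$ is unchanged, the pair $\{\xi f(p),\hat\eta^n f(p)\}$ is linearly independent if and only if $\{\xi f(p),\eta^n f(p)\}$ is; the latter holds by \eqref{itm:cri2}, which yields \eqref{itm:cri2} for $(\xi,\hat\eta)$.

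I expect no conceptual obstacle here: the content is essentially the observation that rescaling a null vector field by a nowhere-vanishing function changes $\eta^n f$ only by the factor $h^n$ modulo terms that the null condition and \eqref{itm:cri1} already annihilate. The only point requiring care is the bookkeeping in the induction, ensuring that the leading coefficient is exactly $h^i$ and that every intervening term $\eta^j f$ with $1\le j\le n-1$ is killed on $S(f)$.
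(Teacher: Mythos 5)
Your argument is correct. Note, however, that the paper itself offers no proof of this statement: it is imported as a ``Fact'' with a citation to \cite[Lemma 2.4]{MSST}, so there is no internal proof to compare against. Your triangular expansion $\hat\eta^i f=\sum_{j=1}^{i}a_{i,j}\,\eta^j f$ with leading coefficient $a_{i,i}=h^i$ is exactly the right mechanism: the Leibniz-rule induction is sound, the $j=1$ terms die on $S(f)$ because $\eta$ is a null vector field (so $\eta f=df(\eta)=\vect{0}$ there), the intermediate terms die by \eqref{itm:cri1}, and at $p$ the identity $\hat\eta^n f(p)=h(p)^n\,\eta^n f(p)$ with $h(p)\ne0$ preserves the linear independence in \eqref{itm:cri2}. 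This is presumably the same computation as in the cited source, and it is complete as written.
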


In \cite{MSST}, several invariants 
of frontals at $n$-type edge singular points
are introduced.
We here review the definition
of the {\it cusp-directional torsion} $\kappa_t$\footnote{%
The cusp-directional torsion was 
originally defined for cuspidal edge singular points
\cite{MS}.}.
More precisely, for a frontal $f : \Sigma \to \R^3$
having $n$-type edge at $p\in \Sigma$,
let $\xi$, $\eta$ be vector field satisfying 
\eqref{itm:cri1} and \eqref{itm:cri2}.
Then, 
the cusp-directional torsion $\kappa_t$
is defined by
$$
   \kappa_t (p) 
  := 
  \dfrac{\det(\xi f,\eta^n f,\xi\eta^n f)}
  {\|\xi f\times\eta^n f\|^2}(p)
  -
  \dfrac{(\xi f\cdot\eta^n f)\det(\xi f,\eta^n f,\xi^2 f)}
  {\|\xi f\|^2\|\xi f\times\eta^n f\|^2}(p).
$$

\section{Generalized Bour's theorem}
\label{sec:sing-Bour}

A helicoidal motion is defined as 
a non-trivial one-parameter subgroup 
of the isometry group of $\R^3$.
A surface in $\R^3$ is called 
a {\it helicoidal surface} 
if it is invariant under 
a helicoidal motion.
A helicoidal motion fixes a line,
which is called the {\it axis}.

Let $x(u)$, $z(u)$ be smooth functions 
defined on an open interval $I$.
We set $\gamma(u):=(x(u),z(u))$.
For a real number $h$,
set $f_{\gamma,h}:I\times \R \to \R^3$ by
\begin{equation}\label{cuspsur}
  f_{\gamma,h}(u,v)
  :=( x(u) \cos v, x(u) \sin v, z(u)+hv)
  \quad(u\in I,~v\in \R).
\end{equation}
Then, $f_{\gamma,h}$ is a helicoidal surface.
The constant $h$ is called the {\it pitch}.
If $h=0$, then $f_{\gamma,h}$ is a surface of revolution.

\begin{definition}
\label{def:generic-helicoidal}
Let $n$ be a integer greater than $1$.
\begin{enumerate}
\item
A frontal in $\R^3$
is said to be a 
{\it generic helicoidal $n$-type edge}
if it is invariant under 
a helicoidal motion,
the singular set is non-empty, and
every singular point is 
a generic $n$-type edge singular point
(cf.\ Definition \ref{def:generic}).
\item
A generic helicoidal $n$-type edge
whose singular set consists of 
$r/n$-cuspidal edges
is called 
a {\it generic helicoidal $r/n$-cuspidal edge}.
\end{enumerate}
\end{definition}

Our main theorem in this paper
is a generalization of Bour's theorem
to generic helicoidal $n$-type edges;
the Bour-type representation formula
(Theorem \ref{thm:CE-3}),
and 
the Bour-type isometric deformation theorem
(Theorem \ref{thm:singular-Bour})
for generic helicoidal $n$-type edges.
For the proof of Theorem \ref{thm:CE-3},
we prepare Lemmas \ref{lem2} and \ref{lem:natural-coord}.
Similarly, 
for the proof of Theorem \ref{thm:singular-Bour},
we prepare Lemmas \ref{lem:LNC} and \ref{lem:cuspidal-torsion}.

\begin{lemma}\label{lem2}
Let $f_{\gamma,h}:I\times \R\to \R^3$
be a helicoidal frontal 
given by \eqref{cuspsur},
and $k$ be a positive integer.
Set $n=k+1$.
If
$p=(u_0,v_0)\in I\times \R$ is 
a generic $n$-type edge singular point
of $f_{\gamma,h}$, then
\begin{equation}\label{eq:heli-A4}
\begin{split}
  &x^{(i)}(u_0)=z^{(i)}(u_0)=0 \quad (i = 1,2, \dots ,k)\\ 
  &x(u_0) \ne 0,\qquad
  z^{(k+1)}(u_0)\ne0
\end{split}
\end{equation}
hold,
where the $x^{(i)}$ means $d^{i}x/du^{i}$.
\end{lemma}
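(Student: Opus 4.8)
The plan is to extract every claimed relation in \eqref{eq:heli-A4} from the two structural features that a generic $n$-type edge possesses: that $p=(u_0,v_0)$ is a rank one singular point with $\kappa_\nu(p)\neq0$, and that (via Fact \ref{fact:MSST}) the singular set is a regular curve transverse to $\ker(df_{\gamma,h})_p$ carrying a null vector field $\eta$ subject to \eqref{itm:cri1} and \eqref{itm:cri2}. First I would locate the singular curve. By \eqref{eq:discriminant} the quantity $\|f_u\times f_v\|^2=h^2\dot x^2+x^2(\dot x^2+\dot z^2)$ depends only on $u$ and is non-negative; since $p$ is singular it vanishes at $u_0$, and for $S(f_{\gamma,h})$ to be a regular curve near $p$ the point $u_0$ must be an isolated zero, so $S(f_{\gamma,h})=\{u=u_0\}$ near $p$ with tangent direction $\xi=\partial_v$. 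Because the discriminant is a sum of two squares, both summands vanish at $u_0$: $h^2\dot x(u_0)^2=0$ and $x(u_0)^2(\dot x(u_0)^2+\dot z(u_0)^2)=0$.

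Next I would rule out $x(u_0)=0$ by invoking genericity. Since $f_{vv}=(-x\cos v,-x\sin v,0)$, we have $f_{vv}(p)=\vect0$ whenever $x(u_0)=0$. If in addition $f_v(p)\neq\vect0$ (equivalently $h\neq0$), the null direction $h\,\partial_u-\dot z(u_0)\,\partial_v$ is transverse to $\partial_v$, so I can pass to an adjusted coordinate whose $v$-direction is still $\partial_v$; then \eqref{eq:LNC} gives $\kappa_\nu(p)=0$, contradicting that $p$ is generic. The only other possibility, $f_v(p)=\vect0$, puts the tangent $\partial_v$ of $S(f_{\gamma,h})$ inside $\ker(df_{\gamma,h})_p$, contradicting the transversality hypothesis of Fact \ref{fact:MSST}. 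Hence $x(u_0)\neq0$.

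With $x(u_0)\neq0$ the two vanishing relations above force $\dot x(u_0)=\dot z(u_0)=0$, which is the $i=1$ case; consequently $f_u\equiv\vect0$ along $\{u=u_0\}$, so $\eta:=\partial_u$ is a null vector field and $(u,v)$ is adjusted at $p$. Differentiating gives $\eta^{i}f=(x^{(i)}\cos v,x^{(i)}\sin v,z^{(i)})$, so condition \eqref{itm:cri1} ($\eta^{i}f=\vect0$ on $S(f_{\gamma,h})$ for $2\le i\le k$) yields $x^{(i)}(u_0)=z^{(i)}(u_0)=0$ for these $i$, completing the first line of \eqref{eq:heli-A4}.

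Finally, for $z^{(k+1)}(u_0)\neq0$ I would compute the limiting unit normal. Writing $\dot x=(u-u_0)^{k}\tilde a$ and $\dot z=(u-u_0)^{k}\tilde b$ with $\tilde a(u_0)=x^{(k+1)}(u_0)/k!$ and $\tilde b(u_0)=z^{(k+1)}(u_0)/k!$, the vector $f_u\times f_v$ carries a common factor $(u-u_0)^{k}$; dividing by its norm and letting $u\to u_0$ identifies $\nu(p)$, up to sign, with $W_0/\|W_0\|$, where $-k!\,W_0=f_v(p)\times\eta^{n}f(p)$. Condition \eqref{itm:cri2} guarantees $W_0\neq\vect0$, and a short computation gives $f_{vv}(p)\cdot W_0=x(u_0)^2\,z^{(k+1)}(u_0)/k!$, so by \eqref{eq:LNC} the condition $\kappa_\nu(p)\neq0$ is equivalent to $z^{(k+1)}(u_0)\neq0$; genericity then yields the claim. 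The hard part will be this last normal-vector limit (together with the adjusted-coordinate bookkeeping in the second step), since everything else is a direct reading of the $\mathcal{A}$-criterion; and genericity is genuinely indispensable here, for \eqref{itm:cri2} alone would still permit $z^{(k+1)}(u_0)=0$ with $x^{(k+1)}(u_0)\neq0$.
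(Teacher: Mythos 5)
Your overall strategy is the same as the paper's: rule out $x(u_0)=0$ by playing the position of the null direction against genericity, extract the vanishing of the intermediate derivatives from \eqref{itm:cri1}, and detect $z^{(k+1)}(u_0)\ne 0$ through the limiting normal curvature computed from the limit of $f_u\times f_v/(u-u_0)^k$. The first and third steps are essentially correct and agree with the paper's computations (your $f_{vv}(p)\cdot\bigl(f_v(p)\times f_{u^{n}}(p)\bigr)=-x(u_0)^2z^{(n)}(u_0)$ is the same determinant the paper evaluates to get $\kappa_\nu$ proportional to $x(0)^2Z(0)$).

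The gap is in the middle step. Fact \ref{fact:MSST} asserts the existence of \emph{some} null vector field $\eta$ satisfying \eqref{itm:cri1} and \eqref{itm:cri2}; it does not assert that \emph{every} extended null vector field does, and Fact \ref{fact:MSST-h} only covers rescaling, not a change of the extension of $\eta$ off $S(f)$. For $i\ge2$ the restriction $\eta^{i}f|_{S(f)}$ genuinely depends on that extension: normalizing the $\eta$ produced by Fact \ref{fact:MSST} to $\eta=\partial_u+a\,\partial_v$ with $a(u_0,v)=0$, one gets $\eta^{2}f=f_{uu}+a_u f_v$ along $\{u=u_0\}$, so \eqref{itm:cri1} only gives $f_{uu}(u_0,v)\in\operatorname{span}\{f_v(u_0,v)\}$, not $f_{uu}(u_0,v)=\vect{0}$. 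You silently substitute $\eta=\partial_u$ and then read \eqref{itm:cri1} as $f_{u^{i}}(u_0,v)=\vect{0}$; that is the conclusion you want, not something you may assume. The missing argument---which is the actual content of this step in the paper---is that the correction terms are forced to vanish by the special form of $f_v$: the relation $f_{uu}=-a_uf_v$ at $(u_0,v)$ reads $\ddot x(u_0)(\cos v,\sin v)=a_u(u_0,v)\,x(u_0)(\sin v,-\cos v)$ in the first two components, and since these two plane vectors are orthogonal and $x(u_0)\ne0$, it forces $\ddot x(u_0)=a_u(u_0,v)=0$, whence $\ddot z(u_0)=-a_u(u_0,v)h=0$; one then iterates to kill $a_{u^{i-1}}(u_0,v)$ and $x^{(i)}(u_0),z^{(i)}(u_0)$ for $i\le k$, and only after that does $\eta^{n}f\equiv f_{u^{n}}\pmod{f_v}$ justify your use of \eqref{itm:cri2} in the final step. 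Without this induction your phrase ``a direct reading of the $\mathcal{A}$-criterion'' overstates the case: the first line of \eqref{eq:heli-A4} is exactly where the work is, not a formality.
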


\begin{proof}
The singular set $S(f_{\gamma,h})$ is given by
\begin{equation}\label{eq:Sf}
  S(f_{\gamma,h}) 
  = \{(u,v) \in I \times \R \mid h^2\dot{x}(u)^2=x(u)^2(\dot{x}(u)^2+\dot{z}(u)^2)=0 \},
\end{equation}
where the dot means $d/du$.
Since $(u_0,v_0)$ is a singular point,
$(u_0,v)$ is also a singular point 
for each $v\in \R$.
Hence, $\sigma(v):=(u_0,v)$ $(v\in \R)$ 
parametrizes the connected component of $S(f_{\gamma,h})$
including $p$.

We abbreviate $f_{\gamma,h}$ to $f$,
and set 
$f_u:=\partial f/\partial u$,
$f_v:=\partial f/\partial v$.
First, we prove $x(u_0) \ne 0$ by contradiction.
Suppose that $x(u_0) = 0$ holds.
In the case of $h = 0$, 
since $f_{v}(u_0,v_0) = \vect{0}$ holds,
$\eta = \partial/\partial v$ gives a null vector field.
As $\sigma(v) = (u_0,v)$ is a singular curve, 
singular direction coincides with the null direction.
But, it contradicts that $p$ is an $n$-type edge. 
Hence, $h \ne 0$.
Then, 
$\hat{\sigma}(v)  = f \circ \sigma(v) $
is given by
$
  \hat{\sigma}(v) 
  =f(u_0,v) 
  =\left( 0 , 0 , z(u_0) + hv \right).
$
Hence we have 
$\hat{\sigma}''(v)=\vect{0}$, 
where the prime means $d/dv$.
This implies 
$\kappa_{\nu}(p) 
= {\hat{\sigma}''(v_0)\cdot \nu(p)}/{\|\hat{\sigma}'(v_0)\|^2}
= 0,$
which is a contradiction.
Therefore, $x(u_0) \ne 0$.

Second, we prove $x^{(i)}(u_0)=z^{(i)}(u_0)=0$ 
for all $i$ with $1\le i \le k$.
Without loss of generality, we may suppose that $u_0 = 0$. 
Since $x(0) \ne 0$ holds, \eqref{eq:Sf} yields that 
$\dot{x}(0)=\dot{z}(0)=0$,
and hence, $f_u(0,v)=\vect{0}$.
By Fact \ref{fact:MSST},
there exists a null vector field $\eta$ 
satisfying \eqref{itm:cri1} and \eqref{itm:cri2}.
Since $\partial/\partial u$ points the null direction at $(0,v)$,
we have that $\eta$ is written as 
$$a_1(u,v)\partial_u+a_2(u,v)\partial_v\qquad
(a_1(0,v)\ne0, a_2(0,v)=0),$$
where we set 
$\partial_u=\partial/\partial u$ and $\partial_v=\partial/\partial v$.
By Fact \ref{fact:MSST-h},
dividing by $a_1$,
we may assume that 
$$
  \eta = \partial_u+a(u,v) \partial_v
  \qquad
  (a(0,v)=0).
$$
Then we have $\eta^2f=f_{uu}+a_uf_v$ along the $v$-axis.
Since $\eta^2f(0,v)=\vect{0}$ holds (when $k>1$),
we have 
\begin{align*}
  &\ddot{x}(0)\cos v -a_u(0,v)x(0)\sin v=0,\\
  &\ddot{x}(0)\sin v +a_u(0,v)x(0)\cos v=0,\\
  &\ddot{z}(0)+a_u(0,v)h=0.
\end{align*}
Since $x(0)\ne0$, it holds that $\ddot{x}(0)=\ddot{z}(0)=a_u(0,v)=0$.
Continuing this argument, we have
$x^{(i)}(0)=z^{(i)}(0)=0$ 
and 
$a_{u^{i-1}}(0,v)=0$
for all $i$ with $1\le i \le k$,
where $a_{u^{i-1}}= \partial^{i-1} a/\partial u^{i-1}$.
Then, 
$\eta^{k+1}f=f_{u^{k+1}}+a_{u^{k}}f_v$
holds along the $v$-axis.
Since $f_v$ and $\eta^{k+1}f$ are linearly independent,
$f_{u^{k+1}}(0,v)\ne\vect{0}$, which implies 
$(x^{(k+1)}(0),z^{(k+1)}(0))\ne(0,0)$.
Thus, there exist functions $X(u),Z(u)$ on $I$
such that
\begin{equation}\label{eq:factor}
  (\dot{x}(u), \dot{z}(u) )= u^{k} (X(u),Z(u)), 
  \qquad
  (X(0),Z(0)) \ne (0,0).
\end{equation}

Third, we prove $z^{(k+1)}(0) \ne 0$.
Setting $\Lambda(u)=\sqrt{h^{2}X(u)^{2} + x(u)(X(u)^{2} + Z(u)^{2})}$,
we have that
$$
\nu(u,v)
= \frac{1}{\Lambda}
        (
        hX \sin v - xZ \cos v,
        -hX \cos v - xZ \sin v,
        xX
        )
$$
gives a unit normal vector field along $f$. 
Thus,
the limiting normal curvature $\kappa_\nu$
(cf.\ Definition \ref{def:generic})
of $f$ at $(0,v)$ is given by
$$
  \kappa_{\nu}
  =\left.\frac{f_{vv}\cdot \nu}
  {f_v\cdot f_v}\right|_{u=0}
  =\frac{x(0)^2}{(x(0)^2+h^2)\Lambda(0)}Z(0).
$$
Thus, $p=(0,v_0)$ is 
generic if and only if $Z(0) \ne 0$,
that is, $z^{(k+1)}(0) \ne 0$ holds.
\end{proof}

\begin{lemma}\label{lem:natural-coord}
Let $f:\Sigma\to \R^3$ be a generic helicoidal 
$n$-type edge,
where $n=k+1$ and $k$ is a positive integer.
For each singular point $p\in S(f)$,
there exist
an open interval $J$ including the origin,
a coordinate neighborhood
$(D,\varphi)$ giving a diffeomorphism
$\varphi=(s,\t):D\to J\times \R$,
and a positive-valued function $\RR(s)$ defined on $J$
such that 
the first fundamental form $\I$ of 
$f$ is given by \eqref{eq:metric-Bour-k},
that is
$\I = s^{2k}ds^2+\RR(s)^2d\t^2$ holds on $D$, 
and $\RR^{(i)}(0)=0$ holds for all $i$ with $1\le i \le k$,
where the $\RR^{(i)}$ means $d^{i}\RR/ds^{i}$.
\end{lemma}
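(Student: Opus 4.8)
The plan is to reduce to the explicit parametrization \eqref{cuspsur} and then to diagonalize the induced metric by a Bour-type change of variables. Since the assertion concerns only the first fundamental form $\I$, which is unaffected by ambient isometries, I would first apply a rigid motion of $\R^3$ normalizing the helicoidal motion to the standard screw motion about the $z$-axis; near the singular point $p$ the surface is then of the form $f=f_{\gamma,h}$ for a profile curve $\gamma=(x,z)$, and I may take $p=(u_0,v_0)$ with $u_0=0$. By \eqref{eq:df} the metric in the $(u,v)$-coordinates reads $\I=E\,du^2+2F\,du\,dv+G\,dv^2$ with $E=\dot x^2+\dot z^2$, $F=h\dot z$ and $G=x^2+h^2$, and $G>0$ near $0$ because $x(0)\ne0$ by Lemma \ref{lem2}.

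Completing the square in $dv$ is the heart of the computation. Setting $\t:=v+\int_0^u (F/G)\,du'$, which is smooth since $F/G$ is, turns $\I$ into $\big((EG-F^2)/G\big)\,du^2+G\,d\t^2$, a form with no cross term. The crucial observation is that $EG-F^2=\|f_u\times f_v\|^2$ is exactly the quantity in \eqref{eq:discriminant}, so the factorization \eqref{eq:factor} supplied by Lemma \ref{lem2} gives $EG-F^2=u^{2k}B(u)^2$ with $B(0)\ne0$; consequently the $du^2$-coefficient is $A(u)^2:=(EG-F^2)/G=u^{2k}C(u)^2$ for a smooth $C$ with $C(0)>0$.

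It then remains to absorb $A(u)^2\,du^2$ into the canonical form $s^{2k}\,ds^2$. I would not try to integrate $A=|u|^kC$ directly, since it fails to be smooth for odd $k$; instead I would realize $A$ as the speed of a genuine smooth curve, namely the coordinate curve $\t\equiv v_0$, parametrized by $\tilde c(u):=f\big(u,\,v_0-\int_0^u (F/G)\,du'\big)$, whose velocity $f_u-(F/G)f_v$ has length $A$. Using $f_u=u^k\bar f$ from Lemma \ref{lem2} one sees $\tilde c'(u)=u^kW(u)$ with $\|W(0)\|=C(0)>0$, so $\tilde c$ has a singular point of multiplicity $n=k+1$ at $0$. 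Fact \ref{fact:PBAL-riem} then yields an orientation-preserving reparametrization $s=s(u)$ with $s(0)=0$ and $\|d\tilde c/ds\|=|s|^k$, that is $A(u)^2\,du^2=s^{2k}\,ds^2$. Thus $\varphi=(s,\t)$ is a diffeomorphism onto some $J\times\R$ with $\I=s^{2k}\,ds^2+\RR(s)^2\,d\t^2$, where $\RR(s):=\sqrt{x(u(s))^2+h^2}>0$; this is \eqref{eq:metric-Bour-k}.

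Finally I would check the vanishing $\RR^{(i)}(0)=0$ for $1\le i\le k$. Since $\dot x=u^kX$ vanishes to order $k$, integrating gives $x(u)-x(0)=O(u^{k+1})$, whence $\RR(s)^2-\RR(0)^2=x(u)^2-x(0)^2=O(u^{k+1})$; as $u=u(s)$ is a diffeomorphism with $u=O(s)$ this is $O(s^{k+1})$, and dividing by $\RR(s)+\RR(0)\ge\RR(0)>0$ yields $\RR(s)-\RR(0)=O(s^{k+1})$, so all derivatives up to order $k$ vanish at $0$. I expect the main obstacle to be the reparametrization step: the speed $A$ is only continuous at the singular point, so the point is to exhibit it as the speed of a smooth curve of multiplicity $k+1$, so that Fact \ref{fact:PBAL-riem} applies rigorously.
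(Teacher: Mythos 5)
Your proposal is correct and follows essentially the same route as the paper's proof: your substitution $\t=v+\int_0^u(F/G)\,du'$ is exactly the paper's $\phi(u)=h\int\dot z/(x^2+h^2)\,d\ivar$, and the reparametrization $s=s(u)$ is obtained, just as in the paper, by applying Fact \ref{fact:PBAL-riem} to the coordinate curve $\t\equiv\mathrm{const}$, whose multiplicity $k+1$ at the singular point comes from Lemma \ref{lem2}. The only cosmetic differences are that you phrase the diagonalization via the metric coefficients $E,F,G$ and the identity $EG-F^2=\|f_u\times f_v\|^2$, and that you verify $\RR^{(i)}(0)=0$ by a big-$O$ estimate instead of the chain rule.
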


\begin{proof}
Without loss of generality,
we may assume that
$f$ is given by $f_{\gamma,h}(u,v):I\times \R \to \R^3$
as in \eqref{cuspsur}.
We set 
$t(u,v):=v+\phi(u)$,
where 
$$
\phi(u):=h\int^u_{u_0}\frac{\dot{z}(\ivar)}{x(\ivar)^2+h^2}d\ivar.
$$
Setting
$\tilde{f}:I\times \R \to\R^3$
as
$\tilde{f}(u,t):=f_{\gamma,h}(u,t-\phi(u))$,
we have
$\tilde{f}_u\cdot\tilde{f}_t=0$.

For each $t\in \R$,
we set $\gamma : I\to \R^3$ as
$\gamma(u):=\tilde{f}(u,t)$.
By using Lemma \ref{lem2},
we may check that 
$$
\gamma^{(i)}(u_0)=\vect{0}
\quad
(i=1,\dots,k),\qquad
\gamma^{(k+1)}(u_0)\ne \vect{0}.
$$
Then, Fact \ref{fact:PBAL-riem} implies 
that there exists a coordinate change
$s=s(u)$ defined on an open neighborhood of $u_0\in I$
such that $\|d\gamma/ds\|=|s|^{k}$
and $s(u_0) = 0$.
We set $\hat{f}(s,t):=\tilde{f}(u(s),t)$,
where $u=u(s)$ is the inverse function of $s=s(u)$.
Since 
$\hat{f}_s=d\gamma/ds$,
we have
$\hat{f}_s\cdot\hat{f}_s=s^{2k}.$
Moreover, 
$\hat{f}_s=(du/ds)\tilde{f}_u$ yields
$\hat{f}_s\cdot\hat{f}_t=0$.
Since
$\tilde{f}_t\cdot \tilde{f}_t
=x(u(s))^{2}+h^2$, 
the first fundamental form $\I$
of $f$ is given by \eqref{eq:metric-Bour-k},
where we set
\begin{equation}\label{eq:U}
\RR(s):=\sqrt{(x\circ u)(s)^2+h^2}.
\end{equation}
Since $d^{i}x/du^{i}=0$ for all $i$ with $1\le i \le k$, we have 
$\RR^{(i)}(0)=0$,
which gives the desired result.
\end{proof}

\begin{theorem}[Bour-type representation formula for generic helicoidal $n$-type edges]
\label{thm:CE-3}
Fix a positive integer $k$
and set $n=k+1$.
Let $J$ be an open interval including the origin $0$,
and $\RR(s)$ be a positive valued smooth function 
on $J$ such that $\RR^{(i)}(0)=0$ for all $i$ with $1\le i \le k$.
Take a smooth function $\dR (s)$ on $J$
such that $\RR'(s)=s^{k}\dR (s)$.
Suppose that there exists a positive number $m>0$ and real number $h$
such that
\begin{equation}\label{eq:constants-CE}
\tag*{$(*)_{h,m}$}
\begin{split}
&\rho_{h,m}(s)
:=\sqrt{m^2 \RR(s)^2 - h^2 -m^4 \RR(s)^2 \dR (s)^2}\\
&\hspace{0.5cm}
\text{is a smooth function on $J$ which satisfies $\rho_{h,m}(0)\ne0$.}
\end{split}
\end{equation}
For $\epsilon_0, \epsilon_1, \epsilon_2 \in \{+1,-1\}$,
we set
$x(s)$, $z(s)$, $\theta(s,\t)$
as
\begin{equation}\label{eq:Bour}
\begin{split}
x(s)&= \epsilon_0\sqrt{m^2 \RR(s)^2-h^2},\\
z(s) &=\epsilon_2 m 
\int^s_0\frac{\ivar^k  \RR(\ivar)\rho_{h,m}(\ivar)}{m^2 \RR(\ivar)^2-h^2}d\ivar,\\
\theta(s,\t) &=
\frac1{m}\left(\epsilon_1 t
- \epsilon_2 h
\int^s_0\frac{\ivar^k \rho_{h,m}(\ivar)}{\RR(\ivar)(m^2 \RR(\ivar)^2-h^2)}d\ivar
\right), 
\end{split}
\end{equation}
respectively.
Then 
$\CC:J\times \R\to \R^3$ 
defined by
$$
 \CC(s,\t):= 
 \left( x(s) \cos \theta(s,\t),\, x(s) \sin \theta(s,\t),\, z(s)+h\theta(s,\t)\right)
$$
is a generic helicoidal $n$-type edge
whose first fundamental form
coincides with $\I = s^{2k}ds^2+\RR(s)^2d\t^2$ 
as in \eqref{eq:metric-Bour-k}.
Conversely,  
on a neighborhood of a singular point,
every generic helicoidal $n$-type edge
whose first fundamental form
coincides with $\I$ as in \eqref{eq:metric-Bour-k}
is given in this manner, up to a rigid motion of $\R^3$.
\end{theorem}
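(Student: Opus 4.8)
The plan is to handle both directions by reducing $\CC$ to the standard helicoidal form $f_{\gamma,h}$ of \eqref{cuspsur} and matching the first fundamental form coefficient by coefficient. For the forward direction I would first observe that $(s,\t)\mapsto(s,\theta(s,\t))$ is a diffeomorphism, since $\partial\theta/\partial\t=\epsilon_1/m\ne0$; writing $v:=\theta(s,\t)$ exhibits $\CC$ as a reparametrisation of $f_{\gamma,h}$ with profile curve $\gamma(s)=(x(s),z(s))$, so $\CC$ is invariant under the helicoidal motion of pitch $h$. Then I would compute $\CC_s$ and $\CC_\t$ directly. A short calculation gives $\CC_\t\cdot\CC_\t=(x^2+h^2)/m^2$, which equals $\RR^2$ because $x^2=m^2\RR^2-h^2$; the relation $\partial\theta/\partial s=-hz'/(x^2+h^2)$ built into \eqref{eq:Bour} forces $\CC_s\cdot\CC_\t=0$; and substituting $\RR'=s^k\dR$ together with the defining identity $\rho_{h,m}^2=m^2\RR^2-h^2-m^4\RR^2\dR^2$ of $(*)_{h,m}$ collapses $\CC_s\cdot\CC_s$ to exactly $s^{2k}$. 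This shows $\I_\CC$ is \eqref{eq:metric-Bour-k}.

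Next I would verify genericity and the singularity type. From the explicit formulas $x'$ and $z'$ each carry a factor $s^k$, so $x^{(i)}(0)=z^{(i)}(0)=0$ for $1\le i\le k$; moreover $x(0)^2=m^2\RR(0)^2-h^2\ge\rho_{h,m}(0)^2>0$ and the leading coefficient of $z'$ is $\epsilon_2 m\RR(0)\rho_{h,m}(0)/x(0)^2\ne0$, whence $z^{(k+1)}(0)\ne0$; thus $\gamma$ satisfies \eqref{eq:heli-A4}. To upgrade this to a generic $n$-type edge I would apply Fact \ref{fact:MSST} with $\xi=\partial_v$ and extended null field $\eta=\partial_u$: along $S(\CC)=\{s=0\}$ one has $\eta^i\CC=\CC_{u^i}=(x^{(i)}\cos v,x^{(i)}\sin v,z^{(i)})$, which vanishes for $2\le i\le n-1$ by \eqref{eq:heli-A4}, giving \eqref{itm:cri1}; while $\xi\CC$ and $\eta^n\CC$ are linearly independent because their cross product has nonvanishing last entry $-x(0)x^{(n)}(0)$, or nonvanishing first two entries $x(0)z^{(n)}(0)(\cos v,\sin v)$ when $x^{(n)}(0)=0$, giving \eqref{itm:cri2}. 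The limiting normal curvature is nonzero by the formula obtained in the proof of Lemma \ref{lem2}, so $\CC$ is a generic helicoidal $n$-type edge.

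For the converse I would start from a generic helicoidal $n$-type edge $f$ with $\I_f$ equal to \eqref{eq:metric-Bour-k}; after a rigid motion placing the axis on the $z$-axis, $f=f_{\gamma,h}$ for some profile $\gamma=(x,z)$ and pitch $h$, and Lemma \ref{lem2} guarantees that $\gamma$ satisfies \eqref{eq:heli-A4}. Following Lemma \ref{lem:natural-coord}, the orthogonalising change $\t=v+\phi(u)$ and the canonical parameter $s$ of Fact \ref{fact:PBAL-riem} put $\I_f$ into the form $s^{2k}\,ds^2+(x^2+h^2)\,d\t^2$. The key point is to match this with the prescribed $s^{2k}\,ds^2+\RR^2\,d\t^2$: since $s$ is the canonical parameter (unique up to sign) it is common to both, and two orthogonal coordinate systems sharing the same $g_{11}=s^{2k}$ can differ only by an affine reparametrisation of the second coordinate, whence $x^2+h^2=m^2\RR^2$ for a constant $m>0$ and $x=\epsilon_0\sqrt{m^2\RR^2-h^2}$. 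I expect this reconciliation, which is precisely where the free parameter $m$ enters, to be the main obstacle, since it rests on the rigidity of the natural parameters rather than on a direct computation.

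Finally, with $x$ pinned down, I would recover $z$ and $\theta$ by reversing the forward calculation. The equation $\CC_s\cdot\CC_s=s^{2k}$, that is $(x')^2+x^2(z')^2/(x^2+h^2)=s^{2k}$, solves for $z'=\epsilon_2 m\RR s^k\rho_{h,m}/(m^2\RR^2-h^2)$; this simultaneously forces $\rho_{h,m}$ to be real and smooth with $\rho_{h,m}(0)\ne0$, exactly because $z^{(k+1)}(0)\ne0$, thereby verifying $(*)_{h,m}$ a posteriori. Orthogonality yields $\partial\theta/\partial s=-hz'/(x^2+h^2)$ and the affine rescaling yields $\partial\theta/\partial\t=\epsilon_1/m$. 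Integrating recovers exactly the expressions \eqref{eq:Bour}, the additive constant in $z$ being absorbed by a vertical translation and the additive constant in $\theta$ by a rotation about the axis; hence $f$ coincides with $\CC$ up to a rigid motion, as claimed.
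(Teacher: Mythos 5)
Your proposal is correct and follows the paper's proof in all essentials. The forward direction --- the computations $\|\CC_t\|^2=\RR^2$, $\CC_s\cdot\CC_t=0$ and $\CC_s\cdot\CC_s=s^{2k}$ via the identity defining $\rho_{h,m}$, followed by Fact \ref{fact:MSST} with $\xi$ tangent to the singular set and $\eta$ the null field, and the nonvanishing of $\kappa_\nu$ --- is exactly the paper's argument. The one place you genuinely diverge is the crux of the converse, the identification $x^2+h^2=m^2\RR^2$. The paper obtains it by writing $\I=dx^2+\tfrac{x^2}{x^2+h^2}\,dz^2+(x^2+h^2)\bigl(d\theta+\tfrac{h}{x^2+h^2}\,dz\bigr)^2$, matching the two orthogonal decompositions termwise (which yields $x_t=z_t=0$ and $\theta=\alpha(t)+\beta(s)$), and then observing that $\theta_t=\epsilon_1\RR/\sqrt{x^2+h^2}$ is simultaneously a function of $t$ alone and of $s$ alone, hence constant. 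You instead invoke rigidity of the natural coordinates. That route works, but your justification via Fact \ref{fact:PBAL-riem} is incomplete as stated: uniqueness of the canonical parameter is a statement about reparametrizing a \emph{fixed} curve, whereas the $s$-coordinate curves of two orthogonal systems need not coincide a priori. To close it, note that $|s|^{k+1}/(k+1)$ is the intrinsic distance to the degeneracy locus $\{s=0\}$ of $\I$, so the level sets of $s$ --- and hence $s$ up to sign, and then $t$ up to an affine change --- are determined by $\I$; this reduces your claim to the same constancy statement the paper proves via $\theta_{ts}=0$. The remaining steps (solving for $z'$, deducing $(*)_{h,m}$ a posteriori from $z'=s^k\bar z$ with $\bar z(0)\ne0$ via Lemma \ref{lem2}, and integrating $\theta_s=-hz'/(x^2+h^2)$) coincide with the paper's.
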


\begin{proof}
First, we prove the former assertion.
It holds that
$\CC_s(s,t)=s^k\overline{\CC}(s,t)$,
where we set
\begin{equation}\label{eq:bar-Psi}
\overline{\CC} = \frac{1}{m\RR x}
        \Bigl(
        m^3U^2V\cos \theta +\epsilon_2h \rho \sin \theta,~
        m^3U^2V\sin \theta -\epsilon_2h \rho \cos \theta,~
        \epsilon_2 \rho x
        \Bigr)
\end{equation}
and $\rho=\rho_{h,m}(s)$.
By a direct calculation, we obtain
\begin{equation}\label{eq:inner-product}
  \|\overline{\CC}\|^2=1,\qquad
  \overline{\CC}\cdot \CC_t=0,\qquad
  \|\CC_t\|^2=U^2.
\end{equation}
As $\CC_s=s^k\overline{\CC}$, 
we may conclude that
the first fundamental form of $\Psi$
coincides with $\I$ as in \eqref{eq:metric-Bour-k}.
Thus the singular set $S(f)$ coincides with $t$-axis.
As $\CC_s=s^k\overline{\CC}$, 
\eqref{eq:inner-product}
yield that $\xi:=\partial_t$ and $\eta:=\partial_s$ satisfy
the conditions \eqref{itm:cri1} and \eqref{itm:cri2}
with $n=k+1$.
By Fact \ref{fact:MSST}, 
every singular point $(0,t)\in J\times \R$
is a $(k+1)$-type edge singular point.

With respect to the genericity,
we calculate the limiting normal curvature 
$\kappa_\nu$
at a singular point $(0,t)\in J\times \R$
(cf.\ Definition \ref{def:generic}).
Since
$$
\nu (s,\t) = \frac{-\epsilon_2}{x}
        \Bigl(
        \epsilon_2 \rho \cos \theta - hm \dR \sin \theta,~
        \epsilon_2 \rho  \sin \theta + hm \dR \cos \theta,~
        -\epsilon_0 m\dR x
        \Bigr)
$$
gives a unit normal vector field along $\Psi$,
we have 
$$
\kappa_\nu(0,t)
=\frac{\CC_{tt}(0,t)\cdot \nu(0,t)}{U(0)^2}
=\frac{\rho(0)}{m^2\RR(0)^2}.
$$
Then \ref{eq:constants-CE} yields that 
$\kappa_\nu(0,t)\ne 0$,
and hence,
every singular point $(0,t)$ is generic.

Next, we prove the converse.
Let $f:\Sigma\to\R^3$ be a 
generic helicoidal $(k+1)$-type edge.
By Lemma \ref{lem:natural-coord},
for each singular point $p\in S(f)$,
there exist
an open interval $J$ including the origin,
a coordinate neighborhood
$(D,\varphi)$ giving a diffeomorphism
$\varphi=(s,\t):D\to J\times \R$,
and a positive-valued function $\RR(s)$ defined on $J$
such that 
the first fundamental form $\I$ of $f$ is written as 
\eqref{eq:metric-Bour-k} on $D$, 
and $\RR^{(i)}(0)=0$ holds for all $i$ with $1\le i \le k$.
We may set $\CC(s,\t):=f\circ \varphi^{-1}(s,t)$ as
$\CC(s,\t)= \left( x \cos \theta, x \sin \theta, z+h\theta\right)$,
where 
$x(s,\t)$, $z(s,\t)$ and $\theta(s,\t)$ are smooth functions on 
$J\times \R$.

According to Lemma $\ref{lem2}$, 
$x(0,0) \ne 0$,
and hence, 
$x(s,t) \ne 0$ holds
on a neighborhood of $(0,0)$.
Then, the first fundamental form $\I$ is given by
$$
  \I
  = dx^2 + \frac{x^2}{x^2+h^2}dz^{2}
  +(x^2+h^2)\left(d\theta+\frac{h}{x^2+h^2}dz\right)^2.
$$
By the construction method of $U(s)$
as in \eqref{eq:U}, we have that
\begin{align}
\label{eq:ds}
s^{2k}ds^2&=dx^2+\frac{x^2}{x^2+h^2}dz^2,\\
\label{eq:dt}
\RR(s)d\t&=\epsilon_1\sqrt{x^2+h^2}\left(d\theta+\frac{h}{x^2+h^2}dz\right)
\end{align}
for $\epsilon_1\in \{+1,-1\}$.
Substituting $dx=x_s ds+x_\t d\t$ and $dz=z_s ds+z_\t d\t$
into \eqref{eq:ds},
we have $x_\t = z_\t = 0$.
Hence, $x$ and $z$ do not depend on $t$,
namely, $x=x(s)$ and $z=z(s)$ hold.
Substituting $d\theta=\theta_{s}ds+\theta_{\t} d\t$
into \eqref{eq:dt}, we have
$$
  \theta_{\t}(s,\t) = \epsilon_1 \frac{\RR(s)}{\sqrt{x(s)^2+h^2}},
  \qquad
  \theta_s(s,\t) = -\frac{h}{x(s)^2+h^2} z'(s),
$$
where the prime means $d/ds$.
The second equation yields $\theta_{st}=0$,
and hence we may write $\theta(s,\t)=\alpha(\t)+\beta(s)$
for some functions $\alpha(\t)$, $\beta(s)$
of one variable.
Thus we have
\begin{equation}
\label{eq:vs-vt-2}
\frac{d}{d\t}\alpha(\t) = \epsilon_1 \frac{\RR(s)}{\sqrt{x(s)^2+h^2}},
\qquad
\frac{d}{ds}\beta(s) = -\frac{h}{x(s)^2+h^2} z'(s).
\end{equation}
The first equation of \eqref{eq:vs-vt-2}
implies that $(\RR(s)/\sqrt{x^2+h^2})_s=0$.
Hence there exists a positive constant $m>0$
such that
\begin{equation}
\label{eq:1/m}
\frac{\RR(s)}{\sqrt{x^2+h^2}} = \frac1{m}.
\end{equation}
Then, the first equation of \eqref{eq:vs-vt-2}
yields that
$d\alpha/d\t = \epsilon_1/m$.
By changing $t\mapsto t+{\rm constant}$ if necessary,
we may assume that $\alpha(0)=0$.
Hence, we have
$
\alpha(\t) = \epsilon_1 t /m.
$
Moreover, by \eqref{eq:1/m}, we have 
$x(s)=\epsilon_0\sqrt{m^2\RR(s)^2-h^2}$
for some $\epsilon_0\in \{+1,-1\}$.
Together with \eqref{eq:ds}, we may conclude that
$$
z'(s)^2 = \frac{m^2s^{2k}\RR(s)^2}{(m^2\RR(s)^2-h^2)^2} 
\left( m^2 \RR(s)^2 - h^2 -m^4 \RR(s)^2 \dR (s)^2 \right).
$$
By Lemma \ref{lem2}, 
there exists a smooth function $\bar{z}(s)$
such that $z'(s)=s^{k}\bar{z}(s)$ and $\bar{z}(0)\ne0$.
Restricting $J$ to a neighborhood of the origin $0$,
we may assume that $\bar{z}(s)\ne0$ on $J$.
Then  
$$
m^2 \RR(s)^2 - h^2 -m^4 \RR(s)^2 \dR (s)^2
=\frac{(m^2\RR(s)^2-h^2)^2}{m^2\RR(s)^2}\bar{z}(s)^2>0
$$
holds on $J$,
which implies \ref{eq:constants-CE}.
Then there exists $\epsilon_2\in \{+1,-1\}$
such that
$\frac{d}{ds}z(s) = {\epsilon_2 m s^{k}\,\RR(s) \rho_{h,m}(s)}/{x(s)^2}.$
Substituting this into 
the second equation of \eqref{eq:vs-vt-2},
we have
$$
\frac{d}{ds}\beta(s) = \frac{-\epsilon_2 h s^{k}\, \rho_{h,m}(s)}{m\RR(s)x(s)^2},
$$
which gives the desired result.
\end{proof}

We call $\{U,h,m,\epsilon_0, \epsilon_1, \epsilon_2\}$
the {\it data} of 
the generic helicoidal $n$-type edge
$\CC:J\times \R\to \R^3$
given in Theorem \ref{thm:CE-3}.
We also denote
$\CC$ by 
$\CC_{[U,h,m,\epsilon_0, \epsilon_1, \epsilon_2]}$
to emphasis the data.

In the proof of Theorem \ref{thm:CE-3},
we have proved the following.

\begin{lemma}
\label{lem:LNC}
The limiting normal curvature $\kappa_\nu$
of the generic helicoidal $n$-type edge
$\CC=\CC_{[U,h,m,\epsilon_0, \epsilon_1, \epsilon_2]}$
along the singular curve $\sigma(t)=(0,t)$
is given by 
$$
  \kappa_\nu = \frac{\rho_{h,m}(0)}{m^2\RR(0)^2},
$$
where $\rho_{h,m}$ is defined in \ref{eq:constants-CE}.
\end{lemma}

Similarly, the following holds.

\begin{lemma}
\label{lem:cuspidal-torsion}
The cusp-directional torsion $\kappa_t$ 
of the generic helicoidal $n$-type edge
$\CC=\CC_{[U,h,m,\epsilon_0, \epsilon_1, \epsilon_2]}$
along the singular curve $\sigma(t)=(0,t)$
is given by 
$$
  \kappa_t = \frac{h}{m^2 U(0)^2}.
$$
\end{lemma}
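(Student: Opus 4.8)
The plan is to evaluate the defining expression for $\kappa_t$ directly along the singular curve $\sigma(t)=(0,t)$, reusing the explicit derivative formulas for $\CC$ established in the proof of Theorem~\ref{thm:CE-3}. As there, I take $\xi:=\partial_t$ and $\eta:=\partial_s$ with $n=k+1$, so that $\xi f=\CC_t$ and $\xi^2 f=\CC_{tt}$ are already given by \eqref{eq:Psi-t}. What remains is to compute $\eta^n f=\partial_s^{\,k+1}\CC$ and $\xi\eta^n f=\partial_t\partial_s^{\,k+1}\CC$, both evaluated at $s=0$.

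First I would compute $\eta^n f$ at $s=0$. Since $\CC_s=s^k\overline{\CC}$ by \eqref{eq:Psi-s}, we have $\eta^{k+1}\CC=\partial_s^{\,k}\bigl(s^k\overline{\CC}\bigr)$, and the Leibniz rule gives $\partial_s^{\,k}(s^k\overline{\CC})\big|_{s=0}=k!\,\overline{\CC}(0,t)$, because every term other than the one differentiating $s^k$ exactly $k$ times carries a positive power of $s$ and so vanishes at $s=0$. Hence $\eta^n f(0,t)=k!\,\overline{\CC}(0,t)$, and since $\partial_t$ commutes with evaluation at $s=0$, also $\xi\eta^n f(0,t)=k!\,\partial_t\overline{\CC}(0,t)$. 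The point of this reduction is that the higher $s$-derivatives of $\overline{\CC}$ never enter, which is what keeps the computation finite.

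Next I would simplify the two terms defining $\kappa_t$. By the orthogonality relation $\overline{\CC}\cdot\CC_t=0$ in \eqref{eq:inner-product}, the inner product $\xi f\cdot\eta^n f=k!\,\overline{\CC}\cdot\CC_t$ vanishes at $s=0$, so the entire second term of $\kappa_t$ drops out. The denominator of the first term then reduces, via $\|\xi f\times\eta^n f\|^2=\|\xi f\|^2\|\eta^n f\|^2-(\xi f\cdot\eta^n f)^2$ together with $\|\CC_t\|^2=\RR^2$ and $\|\overline{\CC}\|^2=1$ from \eqref{eq:inner-product}, to $\RR(0)^2(k!)^2$. Pulling the common factor $k!$ out of the numerator and cancelling, we are left with
\begin{equation*}
\kappa_t(0,t)=\frac{1}{\RR(0)^2}\,\det\bigl(\CC_t,\overline{\CC},\partial_t\overline{\CC}\bigr)\big|_{s=0}.
\end{equation*}

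The remaining and main step is to evaluate this $3\times 3$ determinant. At $s=0$ the integral terms in \eqref{eq:Bour} vanish, so $\theta(0,t)=\epsilon_1 t/m$ and $\partial_t\theta=\epsilon_1/m$; differentiating \eqref{eq:bar-Psi} in $t$ then yields $\partial_t\overline{\CC}$, whose third component vanishes. Expanding the determinant along the row $\CC_t$, it splits into a contribution built from the third component $\epsilon_2\rho x$ of $\overline{\CC}$ and a $2\times 2$ minor in the first two components, and the trigonometric cross terms collapse via $\cos^2\theta+\sin^2\theta=1$. Substituting the structural relations $x^2+h^2=m^2\RR^2$ from \eqref{eq:1/m} and $\rho^2=m^2\RR^2-h^2-m^4\RR^2\dR^2$ from \ref{eq:constants-CE} makes the bracket collapse to $h\,m^2\RR^2 x^2$, and after dividing by the prefactor $m^4\RR^2 x^2$ the determinant equals $h/m^2$; this gives $\kappa_t=h/(m^2\RR(0)^2)$, as claimed. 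The only genuine difficulty here is the bookkeeping of the signs $\epsilon_0,\epsilon_1,\epsilon_2$ and the trigonometric cancellation inside the determinant; everything else is forced by the relations already produced in Theorem~\ref{thm:CE-3}, and the $\epsilon_i^2=1$ identities ensure the answer is sign-free.
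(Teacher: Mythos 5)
Your proposal is correct and follows essentially the same route as the paper's proof: the same choice $\xi=\partial_t$, $\eta=\partial_s$, the same Leibniz-rule reduction $\partial_s^{k+1}\CC|_{s=0}=k!\,\overline{\CC}(0,t)$ coming from \eqref{eq:Psi-s}, the same use of \eqref{eq:inner-product} to kill the second term of $\kappa_t$ and reduce the denominator to $(k!)^2\RR(0)^2$, and the same final evaluation of $\det(\CC_t,\overline{\CC},\overline{\CC}_t)=h/m^2$. The only cosmetic difference is that you recompute $\partial_t\overline{\CC}$ rather than quoting the displayed formula for $\overline{\CC}_t$.
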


\begin{proof}
We use the same notation as in 
the proof of Theorem \ref{thm:CE-3}.
Since $\eta = \partial_s$ is a null vector field 
and $\xi = \partial_t$ is tangent to the singular set, 
the cusp-directional torsion $\kappa_t$ 
at a singular point $(0,t)$ is written as
\begin{equation}\label{eq:kt-1}
  \kappa_t 
  = \frac{\det(\CC_{t} , \CC_{s^{k+1}},\CC_{s^{k+1}t})}{\|\CC_{t} \times \CC_{s^{k+1}}\|^{2}}(0,t) 
  - \frac{(\CC_{t}\cdot \CC_{s^{k+1}}) \det(\CC_{t} , \CC_{s^{k+1}} , \CC_{tt}) 
  }{\|\CC_{t}\|^{2}\|\CC_{t} \times \CC_{s^{k+1}} \|^{2}}(0,t),
\end{equation}
where 
$\CC_{s^{k+1}t}=(\partial_s)^{k+1}\partial_t \Psi$.
As $\CC_s=s^k\overline{\CC}$, 
we have
$\CC_{s^{k+1}}(0,t)=k! \overline{\CC}(0,t)$,
where $\overline{\CC}$ is defined by
\eqref{eq:bar-Psi}.
Thus \eqref{eq:inner-product} yields that
$\CC_{s^{k+1}}(0,t)\cdot \CC_{t}(0,t)=0$,
and
$\|\CC_{t}(0,t) \times \CC_{s^{k+1}}(0,t)\|^{2}
  =(k!)^2U(0)^2.
$
By \eqref{eq:kt-1}, we have
\begin{equation}\label{eq:kt-2}
  \kappa_t 
  = \frac{1}{(k!)^2U(0)^2}
  \det(\CC_{t} , \CC_{s^{k+1}},\CC_{s^{k+1}t})(0,t).
\end{equation}
Since $\CC_{s^{k+1}t} (0,t)=k! \overline{\CC}_t(0,t)$
and 
$$
  \overline{\CC}_t
  = \frac{\epsilon_1}{m^2\RR x}
        \Bigl(
        -m^3U^2V\sin \theta +\epsilon_2h \rho \cos \theta,~
        m^3U^2V\cos \theta +\epsilon_2h \rho \sin \theta,~
        0\Bigr),
$$
we obtain
$
  \det(\CC_{t} , \CC_{s^{k+1}},\CC_{s^{k+1}t})(0,t)
  ={h(k!)^2}/{m^2}.
$
Together with \eqref{eq:kt-2},
we have $\kappa_t = h/(m^2 U(0)^2).$
\end{proof}

\begin{theorem}[Bour-type isometric deformation theorem]
\label{thm:singular-Bour}
Let $f:\Sigma\to\R^3$ be a generic helicoidal $n$-type edge,
and $p\in \Sigma$ be a singular point.
Then, there exist
an open neighborhood $D$ of $p$,
and a $2$-parameter family of 
generic helicoidal $n$-type edges
$\{f_{h,m} : D \to \R^3 \}_{h,m}$
which gives a non-trivial isometric deformation 
of $f$.
Conversely,
every non-trivial isometric deformation 
of $f$ among generic helicoidal $n$-type edges
is given by $\{f_{h,m} \}_{h,m}$.
\end{theorem}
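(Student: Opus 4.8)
The plan is to reduce the entire statement to the representation formula of Theorem~\ref{thm:CE-3}, regarding the profile $\RR$ (equivalently the first fundamental form \eqref{eq:metric-Bour-k}) as the fixed \emph{intrinsic} datum and the pair $(h,m)$ together with the signs $\epsilon_0,\epsilon_1,\epsilon_2$ as the \emph{extrinsic} data that we are free to move. First I would apply Lemma~\ref{lem:natural-coord} to obtain, on a neighborhood $D$ of $p$, natural coordinates $(s,\t)$ in which $\I_f=s^{2k}ds^2+\RR(s)^2d\t^2$ with $\RR^{(i)}(0)=0$ $(1\le i\le k)$, and then invoke the converse part of Theorem~\ref{thm:CE-3} to write $f=\CC_{[\RR,h_0,m_0,\epsilon_0,\epsilon_1,\epsilon_2]}$ for some admissible $(h_0,m_0)$. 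Since the condition \ref{eq:constants-CE} holds at $(h_0,m_0)$ and is open in $(h,m)$ (after possibly shrinking $J$), it holds for all $(h,m)$ in a neighborhood $W$ of $(h_0,m_0)$; for $(h,m)\in W$ I set $f_{h,m}:=\CC_{[\RR,h,m,\epsilon_0,\epsilon_1,\epsilon_2]}$. By the direct part of Theorem~\ref{thm:CE-3} each $f_{h,m}$ is again a generic helicoidal $n$-type edge whose first fundamental form equals $s^{2k}ds^2+\RR(s)^2d\t^2$, so $f_{h,m}$ is isometric to $f$ at $p$ through the identity coordinate change; moreover $x,z,\theta$ in \eqref{eq:Bour} are integrals depending smoothly on $(h,m)$, so the family depends continuously on its parameters and any continuous path in $W$ through $(h_0,m_0)$ produces an isometric deformation of $f$.

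The non-triviality is where the geometric content enters, and I would read it off the invariant computations already recorded in Lemmas~\ref{lem:LNC} and~\ref{lem:cuspidal-torsion}: along the singular curve one has $\kappa_\nu=\rho_{h,m}(0)/(m^2\RR(0)^2)$ and $\kappa_t=h/(m^2\RR(0)^2)$, with $\RR(0)>0$ and $\rho_{h,m}(0)\ne0$. Because geometric invariants (in particular $\kappa_\nu$) are preserved under trivial isometric deformations, it suffices to choose a path in $W$ along which $|\kappa_\nu|$ is non-constant; the explicit formula for $\rho_{h,m}(0)$ shows this is possible, so the deformation cannot be trivial. Verifying that $h$ and $m$ are genuinely two independent parameters — e.g.\ that $(h,m)\mapsto(\kappa_\nu,\kappa_t)$ has rank two at $(h_0,m_0)$, so that distinct parameter values give pairwise non-congruent surfaces — is the step I expect to require the most care, although for the statement as phrased only non-constancy of a single invariant is strictly needed.

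For the converse, let $\{f^\tau\}_{\tau\in[0,1]}$ be a non-trivial isometric deformation of $f$ through generic helicoidal $n$-type edges. Applying the converse part of Theorem~\ref{thm:CE-3} to each $f^\tau$ expresses it, in its own natural coordinates, as $\CC_{[\RR^\tau,h^\tau,m^\tau,\epsilon_0^\tau,\epsilon_1^\tau,\epsilon_2^\tau]}$, where $\RR^\tau$ is the profile of its first fundamental form. Since $f^\tau$ is isometric to $f$, both metrics have the canonical shape \eqref{eq:metric-Bour-k}; the canonical parameter $s$ of Fact~\ref{fact:PBAL-riem} is intrinsically determined up to sign, and imposing that the metric stay diagonal with a $d\t^2$-coefficient depending only on $s$ forces the residual freedom to be $\t\mapsto a\t+b$, whence $\RR^\tau=c_\tau\,\RR$ for a constant $c_\tau>0$. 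The key point is that this rescaling introduces no new surfaces: substituting $\RR\mapsto c\RR$, $m\mapsto m/c$, $\t\mapsto \t/c$ leaves every entry of \eqref{eq:Bour}, and hence $\CC$ itself, unchanged up to the reparametrization $\t\mapsto c\t$, so $\CC_{[\RR^\tau,h^\tau,m^\tau,\cdot]}$ coincides, up to reparametrization and a rigid motion, with $f_{h^\tau,\,c_\tau m^\tau}$. Finally the sign of $s$ and the signs $\epsilon_0^\tau,\epsilon_1^\tau,\epsilon_2^\tau$ are discrete data, hence constant in $\tau$ by continuity of the deformation and equal to those of $f$; thus $\tau\mapsto(h^\tau,c_\tau m^\tau)$ is a path in $W$ and every $f^\tau$ lies in $\{f_{h,m}\}$, which is the assertion.
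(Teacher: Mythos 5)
Your proposal is correct and follows essentially the same route as the paper: reduce to the representation formula of Theorem~\ref{thm:CE-3} via Lemma~\ref{lem:natural-coord}, use openness of \ref{eq:constants-CE} to vary $(h,m)$, and establish non-triviality by showing $(h,m)\mapsto(\kappa_\nu,\kappa_t)$ has rank two using Lemmas~\ref{lem:LNC} and~\ref{lem:cuspidal-torsion} (the paper computes the Jacobian determinant explicitly as $1/(m^3\rho_{h,m}(0)U(0)^2)$). Your treatment of the converse, in particular the observation that the rescaling $\RR\mapsto c\RR$, $m\mapsto m/c$, $\t\mapsto\t/c$ introduces no new surfaces, is more detailed than the paper, which simply cites Theorem~\ref{thm:CE-3} for that direction.
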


\begin{proof}
Let $n=k+1$ and $h_0$ be the pitch of $f$.
By Lemma \ref{lem:natural-coord},
there exist
an open interval $J$ including the origin,
a coordinate neighborhood
$(D,\varphi)$ giving a diffeomorphism
$\varphi=(s,\t):U\to J\times \R$,
and a positive-valued function $\RR(s)$ defined on $J$
such that $\RR'(0) = \cdots =\RR^{(k)}(0) =0$ and
the first fundamental form $\I$ of $f$ is given by 
\eqref{eq:metric-Bour-k}.

By an isometry of $\R^3$ and 
a parameter translation $u\mapsto u+\text{constant}$,
we may assume that 
the original frontal $f(s,\t)$ is given by
$\CC_{[U,h_0,1,\epsilon_0,\epsilon_1,\epsilon_2]}(s,\t)$
for some $\epsilon_0,\epsilon_1,\epsilon_2\in \{+1,-1\}$.
In particular, \ref{eq:constants-CE}
holds for $h=h_0$, $m=1$.
Then, there exist constants
$h,m$ such that $h$ (resp.\ $m$) 
is sufficiently close to $h_0$ (resp.\ $1$),
and \ref{eq:constants-CE} holds.
Hence, 
setting $f_{h,m}:=\CC_{[U,h,m,\epsilon_0,\epsilon_1,\epsilon_2]}$,
we have that
the family $\{f_{h,m}\}_{h,m}$
gives an isometric deformation of 
$f\,(=\CC_{[U,h_0,1,\epsilon_0,\epsilon_1,\epsilon_2]})$.

To prove the non-triviality of the isometric deformation
$\{f_{h,m}\}_{h,m}$,
we set $\psi(h,m):=(\kappa_\nu,\kappa_t)$.
By Lemmas \ref{lem:LNC} and \ref{lem:cuspidal-torsion},
we have
$
  \psi(h,m)
  = \left(
      \rho_{h,m}(0),~
      h
     \right)/(m^2 U(0)^2).
$
Since the Jacobian determinant of $\psi$, 
$$
   \det
   \begin{pmatrix}
   \partial \kappa_\nu/\partial h & \partial \kappa_\nu/\partial m\\
   \partial \kappa_t/\partial h & \partial \kappa_t/\partial m
   \end{pmatrix}
   = \frac{1}{m^3 \rho_{h,m} (0) U(0)^2},
$$
is not zero,
we have that $\psi$ is a local diffeomorphism,
and hence,
the family $\{f_{h,m}\}_{h,m}$
gives a non-trivial isometric deformation of $f$.
The converse assertion follows from 
Theorem \ref{thm:CE-3}.
\end{proof}

Let $\CC_{h_0}:=\CC_{[U,h_0,m,\epsilon_0,\epsilon_1,\epsilon_2]}$
be the generic helicoidal $n$-type edge given in Theorem \ref{thm:CE-3}.
In particular, $(*)_{h_0,m}$ holds.
Since 
$$
\rho_{0,m}\ge \rho_{h,m}\ge \rho_{h_0,m}
\qquad
(0\le h \le h_0)
$$
holds,
we have $(*)_{h,m}$ for each $h\in [0,h_0]$.
Hence $\{\CC_{h}\}_{h\in [0,h_0]}$
defined by
$\CC_{h}:=\CC_{[U,h,m,\epsilon_0,\epsilon_1,\epsilon_2]}$
gives an isometric deformation
between $\CC_{h_0}$ and $\CC_0$.
Since $\CC_0$ is of pitch $0$,
we have the following.

\begin{corollary}
\label{cor:revolution}
For each generic helicoidal $n$-type edge,
there exists a family of generic helicoidal $n$-type edges
which gives a local isometric deformation 
to a generic $n$-type edge of revolution.
\end{corollary}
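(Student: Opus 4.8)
The plan is to invoke the Bour-type representation formula (Theorem~\ref{thm:CE-3}) and then deform the pitch continuously down to zero while keeping the first fundamental form fixed. First I would apply the converse part of Theorem~\ref{thm:CE-3} to express the given generic helicoidal $n$-type edge $f$, on a neighbourhood of its singular point, in the standard form $\CC_{[\RR,h_0,m,\epsilon_0,\epsilon_1,\epsilon_2]}$, where $h_0$ is the pitch and $\RR$ is the profile function appearing in \eqref{eq:metric-Bour-k}; composing with the reflection $(X,Y,Z)\mapsto(X,Y,-Z)$ if necessary, I may assume $h_0\ge0$. By construction \ref{eq:constants-CE} holds for the pair $(h_0,m)$.

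The crux is to show that \ref{eq:constants-CE} persists for every smaller pitch. Since the radicand
\[
m^2\RR(s)^2-h^2-m^4\RR(s)^2\dR(s)^2
\]
defining $\rho_{h,m}$ depends on $h$ only through $h^2$, it is monotone decreasing in $h^2$; consequently $\rho_{0,m}\ge\rho_{h,m}\ge\rho_{h_0,m}$ on $J$ for all $h\in[0,h_0]$. Because \ref{eq:constants-CE} for $(h_0,m)$ makes the radicand of $\rho_{h_0,m}$ smooth and positive near $s=0$, after shrinking $J$ the radicand of $\rho_{h,m}$ is positive there as well, so its square root is smooth with $\rho_{h,m}(0)\ne0$; that is, \ref{eq:constants-CE} holds for every $h\in[0,h_0]$. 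The inequality $m^2\RR^2-h^2\ge m^2\RR^2-h_0^2>0$ simultaneously guarantees that $x(s)$ in \eqref{eq:Bour} stays well-defined and nonzero.

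Granting this, the forward direction of Theorem~\ref{thm:CE-3} yields, for each $h\in[0,h_0]$, a generic helicoidal $n$-type edge $\CC_h:=\CC_{[\RR,h,m,\epsilon_0,\epsilon_1,\epsilon_2]}$ whose first fundamental form is $\I=s^{2k}\,ds^2+\RR(s)^2\,d\t^2$, independent of $h$. Thus every member shares the common metric, and the reparametrised family $f^\tau:=\CC_{(1-\tau)h_0}$ $(\tau\in[0,1])$ depends continuously on $\tau$ through the explicit integrals \eqref{eq:Bour}, satisfies $f^0=\CC_{h_0}=f$, and is isometric to $f$ for every $\tau$; hence $\{f^\tau\}_{\tau\in[0,1]}$ is a local isometric deformation of $f$ consisting of generic helicoidal $n$-type edges. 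At $\tau=1$ the pitch vanishes, so by the remark following \eqref{cuspsur} the terminal member $\CC_0$ is a surface of revolution, and since it is still a generic $n$-type edge it is exactly a generic $n$-type edge of revolution, as required.

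The only non-routine step will be the persistence of \ref{eq:constants-CE} along the path, which I reduce entirely to the monotonicity of $\rho_{h,m}$ in $h^2$ recorded above; the continuous dependence of the family on the parameter and the identification of the endpoint $\CC_0$ as a surface of revolution are immediate from the closed-form expressions \eqref{eq:Bour}.
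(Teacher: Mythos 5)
Your proposal is correct and follows essentially the same route as the paper: both arguments normalize $f$ to $\CC_{[\RR,h_0,m,\epsilon_0,\epsilon_1,\epsilon_2]}$ via Theorem~\ref{thm:CE-3}, use the monotonicity $\rho_{0,m}\ge\rho_{h,m}\ge\rho_{h_0,m}$ (i.e.\ the radicand depends on $h$ only through $-h^2$) to propagate the condition $(*)_{h,m}$ down to $h=0$, and then read off the family $\{\CC_h\}_{h\in[0,h_0]}$ with common first fundamental form terminating at the surface of revolution $\CC_0$. The extra details you supply (reflecting to arrange $h_0\ge0$, the bound $m^2\RR^2-h^2\ge m^2\RR^2-h_0^2$ keeping $x(s)$ well-defined) are harmless refinements of the same argument.
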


\begin{example}\label{ex:GHF}
Let $k$ be a positive integer, and set
$$
\RR(s):=
1+\int_{0}^s \zeta^k\,\sin \zeta\,d\zeta.
$$
We consider a generic helicoidal $n$-type edge
$(n=k+1)$
given by our Bour-type representation formula.
Since $\RR'(s)=s^k\,\sin s$,
we have $\dR (s)=\sin s$.
Substituting
$m=1$,
$\epsilon_0=\epsilon_1=1$
and 
$\epsilon_2=-1$
into $\CC_{[U,h,m,\epsilon_0,\epsilon_1,\epsilon_2]}(s,\t)$ 
given in Theorem \ref{thm:CE-3},
we obtain 
a $1$-parameter family of
generic helicoidal $n$-type edges
$\CC_h(s,\t):=\CC_{[U,h,1,+,+,-]}(s,\t)$.
This family $\{\CC_h\}_h$ has the common first fundamental form
$\I=s^{2k}\,ds^2+U(s)^2\,dt^2$.
See Figures \ref{fig:Bour-23CE} and \ref{fig:Bour-CE-3,4cusp} 
for $k=1,2$, respectively.
In the case of $k=1$ 
(resp. $k=2$), 
every singular point of 
this $\CC_h(s,\t)$
is in fact cuspidal edge
(resp. $4/3$-cuspidal edge), 
see Proposition \ref{prop:criterion}.
A part of $\CC_0$ for $k=1$ is shown in Figure \ref{fig:CE}, right.
\end{example}

\begin{figure}[htb]
\begin{center}
 \begin{tabular}{{c@{\hspace{6mm}}c@{\hspace{6mm}}c}}
  \resizebox{3.5cm}{!}{\includegraphics{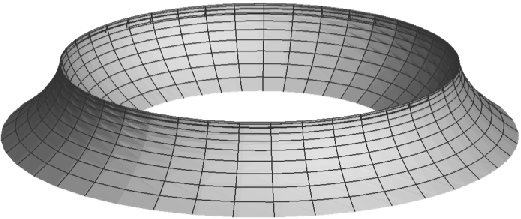}}&
  \resizebox{3.5cm}{!}{\includegraphics{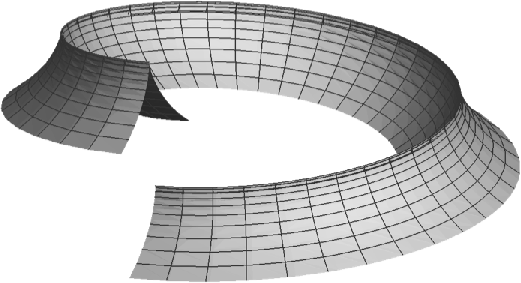}}&
  \resizebox{3.5cm}{!}{\includegraphics{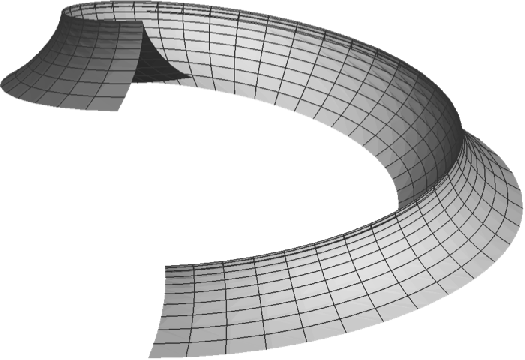}}
 \end{tabular}
\end{center}
\caption{
The generic helicoidal cuspidal edges $\CC_h$
in Example \ref{ex:GHF}.
The left (resp.\ center, right) figure shows
the graphic of $h=0$
(resp. $h=0.1$, $0.2$).}
\label{fig:Bour-23CE}
\end{figure}

\begin{figure}[htb]
\begin{center}
 \begin{tabular}{{c@{\hspace{6mm}}c@{\hspace{6mm}}c}}
  \resizebox{3.5cm}{!}{\includegraphics{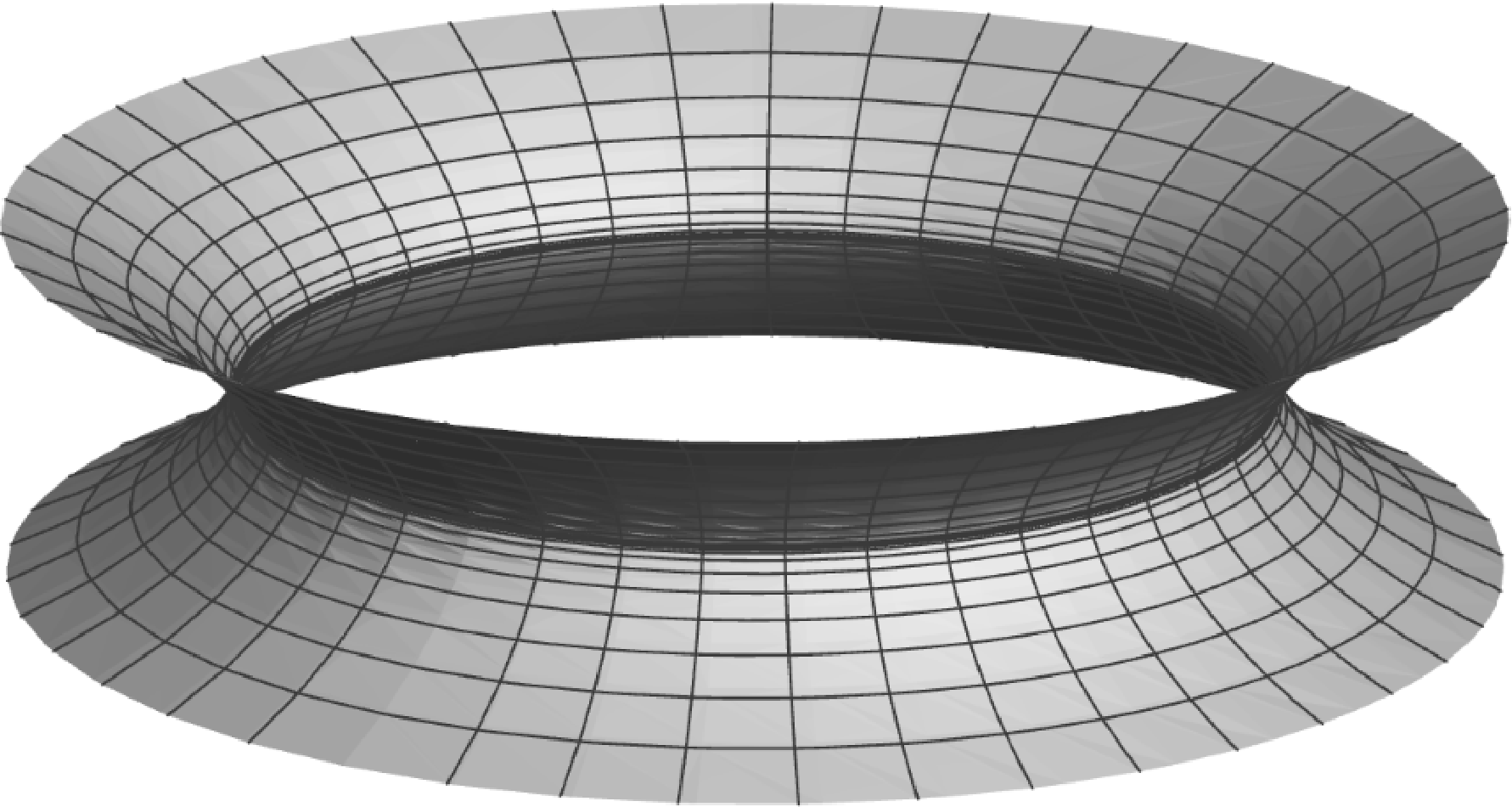}}&
  \resizebox{3.5cm}{!}{\includegraphics{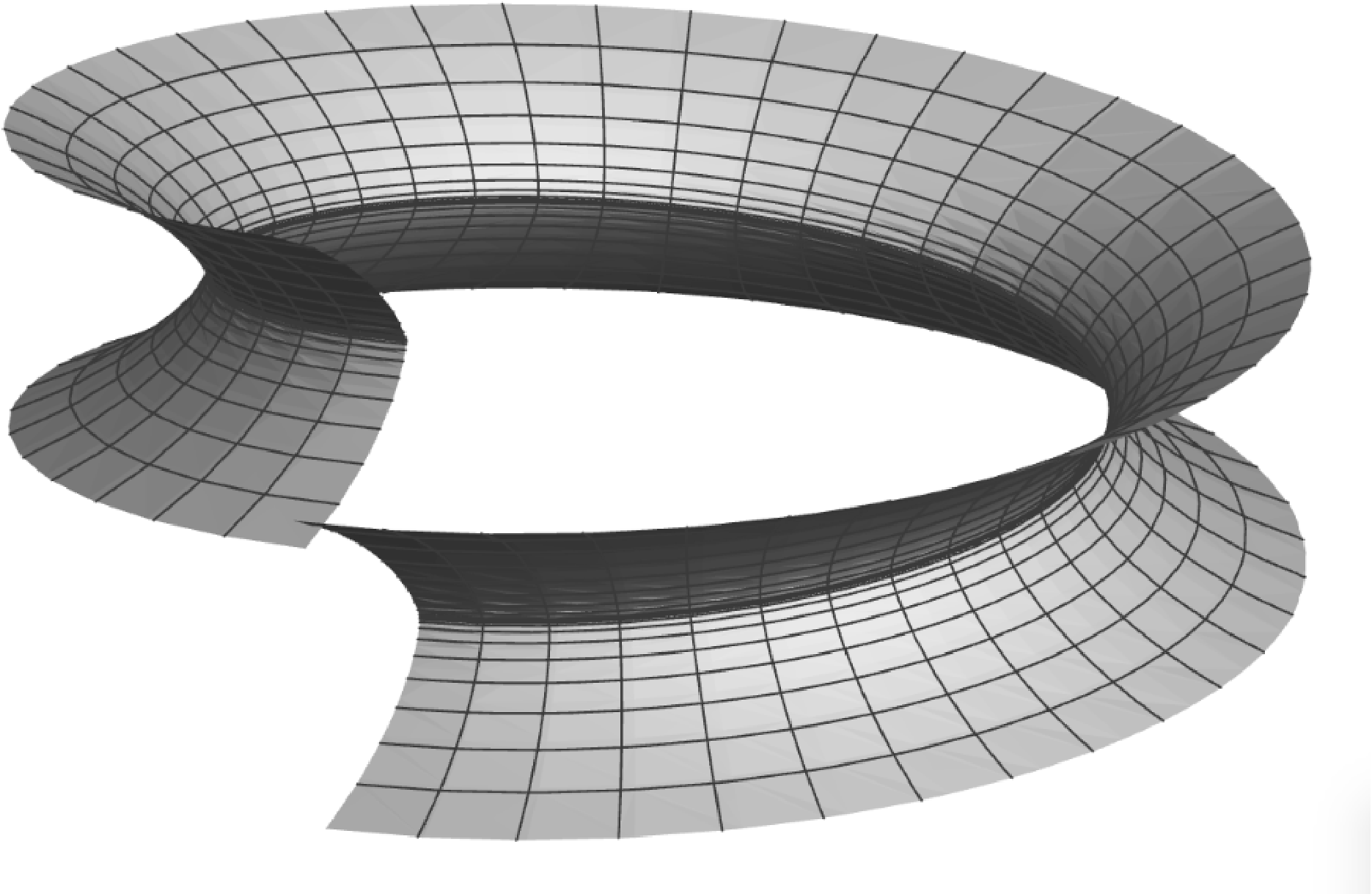}}&
  \resizebox{3.5cm}{!}{\includegraphics{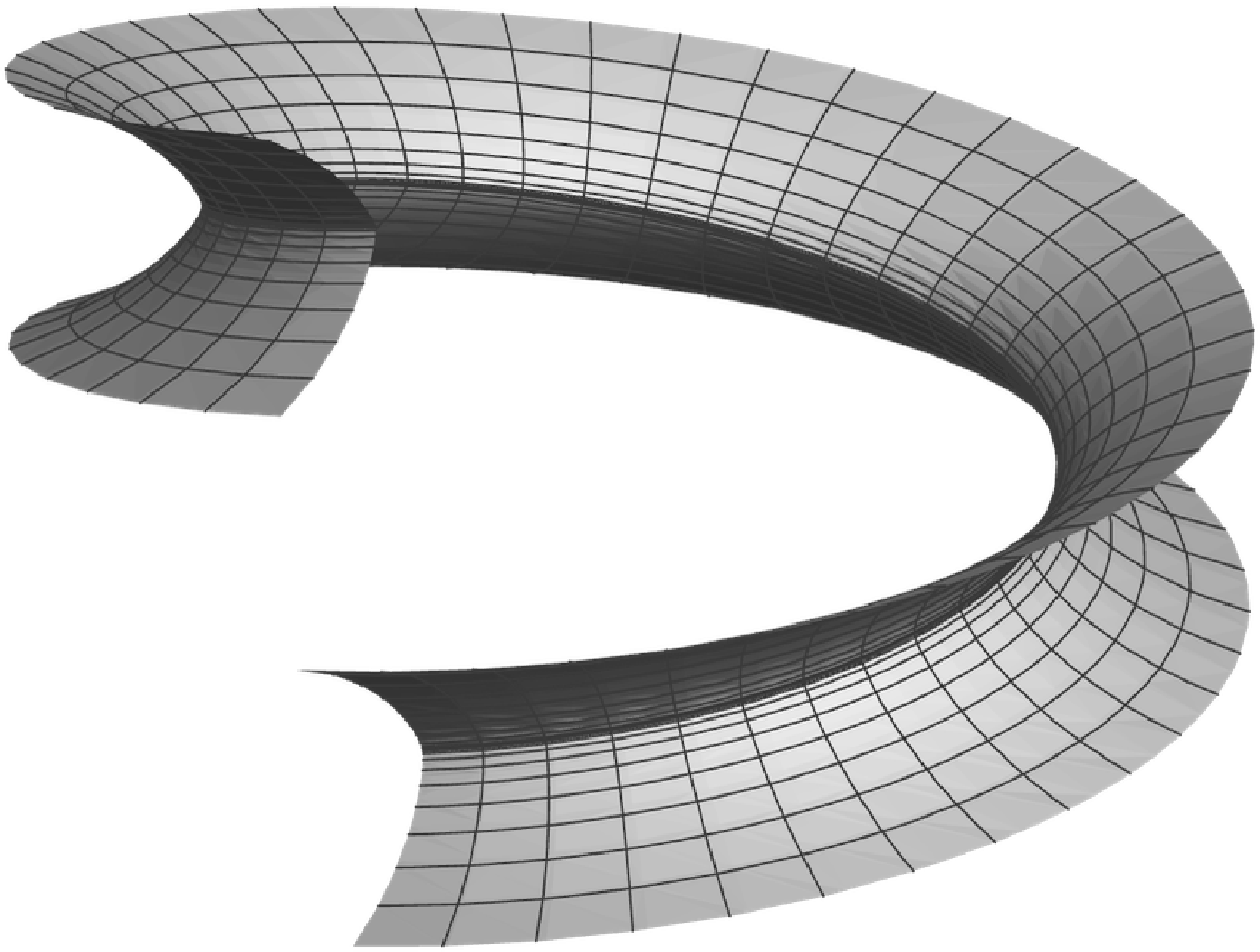}}
 \end{tabular}
\end{center}
\caption{The generic helicoidal $4/3$-cuspidal edges $\CC_h$
in Example \ref{ex:GHF}.
The left (resp.\ center, right) figure shows
the graphic of $h=0$
(resp. $h=0.1$, $0.2$).}
\label{fig:Bour-CE-3,4cusp}
\end{figure}

\begin{remark}[Isomers of cuspidal edges]
\label{rem:isomer}
Let $f:\Sigma\to \R^3$ be a real analytic front 
having a generic cuspidal edge $p\in \Sigma$.
Denote by $\I$ 
the first fundamental form of $f$.
Take a singular curve $\sigma(\t)$ passing through
$p=\sigma(0)$,
and set $\hat{\sigma}(t):=f\circ \sigma(\t)$.
Let $C:={\rm Image}(\hat{\sigma})$ 
be the image of $\hat{\sigma}$,
which is called the {\it singular value set} of $f$.
In \cite{HNSUY1} (see also \cite{NUY, HNUY}),
it is proved that 
there exist an open neighborhood $D$ of $p$
and real analytic fronts 
$\check{f}, f_*, \check{f}_*$ defined on $D$
such that 
\begin{itemize}
\item the first fundamental forms of 
$\check{f}, f_*, \check{f}_*$ coincide with $\I$,
and
\item
the singular value sets of 
$\check{f}, f_*, \check{f}_*$ are subset of $C$.
\end{itemize}
The orientation of 
the singular value set $C$ of $f$ 
(resp.\ $f_*$)
coincides that of $\check{f}$
(resp.\ $\check{f}_*$).
Moreover, 
the singular value set of $f$ 
has the opposite orientation 
to that of $f_*$.
Hence, 
$\check{f}$
(resp.\ $\check{f}_*$)
is called the {\it isometric dual}
of $f$
(resp.\ $f_*$).
In \cite{HNSUY1},
the existence of 
$\check{f}, f_*, \check{f}_*$
is proved by using the Cauchy-Kowalevski theorem.
Hence, it is not easy to construct 
the isometric dual $\check{f}$ of 
a given real analytic cuspidal edge $f$
explicitly, in general.
However, if $f$ has 
a {\it symmetry}, 
then $\check{f}$ is given in an explicit way
(see \cite{HNSUY1}, for details).

In the case of generic helicoidal cuspidal edges,
we can construct the isometric duals explicitly.
In fact, generic helicoidal cuspidal edges
have a symmetry \cite[Example 5.4]{HNSUY1}.
For $f:=\CC_{[U,h,m,\epsilon_0,+,+]}$ 
defined as in Theorem \ref{thm:CE-3},
the isometric dual $\check{f}$ of $f$
is given by
$\CC_{[U,h,m,\epsilon_0,+,-]}$.
Moreover, $f_*$ and $\check{f}_*$ are given by
$\CC_{[U,h,m,\epsilon_0,-,+]}$
$\CC_{[U,h,m,\epsilon_0,-,-]}$.
In other words, the four generic helicoidal cuspidal edges
$$
  \CC_{[U,h,m,\epsilon_0,+,+]},\quad
  \CC_{[U,h,m,\epsilon_0,+,-]},\quad
  \CC_{[U,h,m,\epsilon_0,-,+]},\quad
  \CC_{[U,h,m,\epsilon_0,-,-]}
$$
have the common first fundamental form
and the common singular value set.

Let $f_1, f_2$ be cuspidal edges
having
the common first fundamental form
and the common singular value set.
If the image of $f_1$
does not coincide with that of $f_2$
as set germs at singular points,
then $f_2$ is called the {\it isomer} of $f_1$
(\cite{NUY, HNSUY1, HNSUY2}).
In our cuspidal edges, 
$\CC_{[U,h,m,\epsilon_0,+,+]}$
is an isomer of
$\CC_{[U,h,m,\epsilon_0,+,-]}$ (cf.\ Figure \ref{fig:Bour-isomer}).

\begin{figure}[htb]
\begin{center}
 \begin{tabular}{{c}}
  \resizebox{3.5cm}{!}{\includegraphics{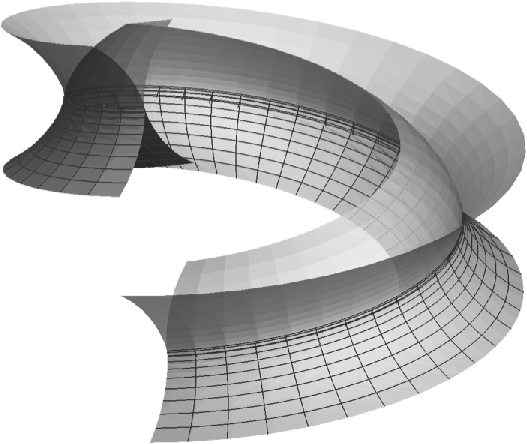}}
 \end{tabular}
\end{center}
\caption{The generic helicoidal cuspidal edge $\CC_h$
in Example \ref{ex:GHF} (meshed one), 
and the isomer of $\CC_h$ (the one without mesh)
in the case of $h=0.2$.}
\label{fig:Bour-isomer}
\end{figure}

In \cite{HNSUY2}, 
it is proved that there exists infinitely many
isomers of generic real analytic cuspidal edges
along a knot.
\end{remark}

\section{Applications of Bour-type theorem}
\label{sec:application}

In this section, as applications of our Bour-type theorem
(Theorems \ref{thm:CE-3} and \ref{thm:singular-Bour}),
we prove the existence of 
isometric deformations of $r/n$-cuspidal edges
which are invariant under helicoidal motions
(Corollaries \ref{cor:Bour-CE} and \ref{cor:Bour-45CE}),
and show that 
several invariants of generic helicoidal finite type edges 
are extrinsic invariants
(Corollaries \ref{cor:nu-c} and \ref{cor:kappa-c}).

\subsection{Isometric deformations of $r/n$-cuspidal edges}
\label{sec:nr-ce}

Throughout this subsection, $Q$ denotes 
the following set:
$$
Q=\left\{
3/2, 5/2, 7/2, 4/3, 5/3, 5/4
\right\}.
$$
For coprime positive integers $n,r$ satisfying
$r/n\in Q$,
a smooth surface $F_{n,r}:\R^2\to \R^3$ defined by
$$
F_{n,r}(u,v):=(u^{n},u^{r},v)\qquad
(u,v\in \R)
$$
is called 
the {\it standard $r/n$-cuspidal edge},
or the {\it standard $(n,r)$-cuspidal edge}.
The standard cuspidal edge $f_C$
defined in the introduction coincides with $F_{2,3}$.
Also, we set
$F_{4,5,7}, F_{4,5,-7}:\R^2\to \R^3$ as
$$
F_{4,5,\pm 7}(u,v):=(u^{4},u^{5}\pm u^{7}, v)\qquad
(u,v\in \R).
$$
We call
$F_{4,5,\pm 7}$
the {\it standard $(4,5,\pm7)$-cuspidal edge}.

\begin{definition}
Let $f: \Sigma\to \R^3$ be a smooth surface.
A singular point $p\in S(f)$
is said to be {\it $r/n$-cuspidal edge\/}, 
or {\it $(n,r)$-cuspidal edge\/}, if 
the map-germ $f : (\Sigma,p)\to (\R^3,f(p))$
is $\mathcal{A}$-equivalent to 
the map-germ $F_{n,r} : (\R^2,0)\to (\R^3,0)$.
Similarly,
a singular point $p\in S(f)$
is said to be {\it $(4,5,\pm7)$-cuspidal edge\/}, if 
the map-germ $f : (\Sigma,p)\to (\R^3,f(p))$
is $\mathcal{A}$-equivalent to 
the map-germ $F_{4,5,\pm7} : (\R^2,0)\to (\R^3,0)$.
\end{definition}

We remark that an $r/n$-cuspidal edge singular point
is $n$-type edge,
and a $(4,5,\pm7)$-cuspidal edge singular point
is $4$-type edge.
The purpose of this subsection is to clarify how these 
singularities are deformed under the Bour-type isometric deformation.
First, we prepare the criteria for singularities of plane curves.

For coprime positive integers $n,r$ satisfying
$r/n\in Q$, we set
$$
\Gamma_{n,r}(u):=(u^{n},u^{r})\qquad
(u\in \R).
$$
The map $\Gamma_{n,r}:\R\to \R^2$ 
is called the {\it standard $r/n$-cusp},
or the {\it standard $(n,r)$-cusp}.
The multiplicity of $\Gamma_{n,r}$ at $u=0$
is given by $n$.
Moreover we set
$$
\Gamma_{4,5,\pm7}(u):=(u^{4},u^{5}\pm u^{7})\qquad
(u\in \R).
$$
The map $\Gamma_{4,5,\pm7}:\R\to \R^2$ 
is called the {\it standard $(4,5,\pm7)$-cusp}.

\begin{definition}
\label{def:nr-cusp}
Let $\gamma: I\to \R^2$ be a smooth curve.
For $r/n\in Q$,
a singular point $u_0\in I$ of $\gamma$ 
is said to be {\it $r/n$-cusp\/},
or {\it $(n,r)$-cusp\/}, if 
the map-germ $\gamma : (I,u_0)\to (\R^2,\gamma(u_0))$
is $\mathcal{A}$-equivalent to 
the map-germ $\Gamma_{n,r} : (\R,0)\to (\R^2,0)$.
Similarly,
a singular point $u_0\in I$ of $\gamma$ 
is said to be {\it $(4,5,\pm7)$-cusp\/}, if 
the map-germ $\gamma : (I,u_0)\to (\R^2,\gamma(u_0))$
is $\mathcal{A}$-equivalent to 
the map-germ $\Gamma_{4,5,\pm7} : (\R,0)\to (\R^2,0)$.
\end{definition}

The following criteria are known
(cf.\ \cite{Bruce-Gaffney}, \cite{porteous}).

\begin{fact}
\label{criteria_cusp}
Let $\gamma:I\to \R^2$ be a smooth curve
having a singular point $u_0\in I$.
Then,
\begin{enumerate}
\item\label{itm:23}
$\gamma$ has $3/2$-cusp at $u_0$
if and only if 
$\det(\gamma'',\gamma''')(u_0)\ne0.$
\item\label{itm:25} 
$\gamma$ has $5/2$-cusp at $u_0$
if and only if 
there exists $c_1\in \R$
such that
$$
\gamma'''(u_0)=c_1\,\gamma''(u_0),\quad
\det(\gamma'',3\gamma^{(5)}-10c_1\gamma^{(4)})(u_0)\ne0.
$$
\item\label{itm:34} 
$\gamma$ has $4/3$-cusp at $u_0$
if and only if $\gamma''(u_0)=0$ and
$\det(\gamma''',\gamma^{(4)})(u_0)\ne0.$
\item\label{itm:35}
$\gamma$ has $5/3$-cusp at $u_0$
if and only if 
$\gamma''(u_0)=0$ and
$$
\det(\gamma''',\gamma^{(4)})(u_0)=0,\quad
\det(\gamma''',\gamma^{(5)})(u_0)\ne0.
$$
\end{enumerate}
\end{fact}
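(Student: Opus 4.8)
I would organize the argument around the two directions of each equivalence, using the multiplicity as the first $\mathcal{A}$-invariant and then reducing $\gamma$ to the model $\Gamma_{n,r}$ by explicit source and target coordinate changes. First I would record that the stated conditions already fix the multiplicity at $u_0$. Since $u_0$ is singular, $\gamma'(u_0)=\vect{0}$; in (1) and (2) the nonvanishing determinant forces $\gamma''(u_0)\ne\vect{0}$, so the multiplicity is exactly $2$, while in (3) and (4) the hypothesis $\gamma''(u_0)=\vect{0}$ together with $\gamma'''(u_0)\ne\vect{0}$ (again forced by the displayed determinant) gives multiplicity exactly $3$. As the multiplicity is preserved by $\mathcal{A}$-equivalence, this matches the multiplicity $n$ of $\Gamma_{n,r}$.

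For the implication ``conditions $\Rightarrow$ $r/n$-cusp'' I would proceed constructively. After a linear change in the target I may assume $\gamma^{(n)}(u_0)$ points along the first axis; by the Hadamard lemma the first component then equals $u^n\cdot(\text{unit})$, and a reparametrization $w=u\cdot(\text{unit})^{1/n}$ of the source makes it exactly $w^n$. For $n=3$ the alignment forces the second component to start at order $\ge 4$, with second-axis coefficients $\gamma_2^{(4)}(u_0)$ and $\gamma_2^{(5)}(u_0)$ equal (up to the nonzero factor $\gamma_1'''(u_0)$) to $\det(\gamma''',\gamma^{(4)})(u_0)$ and $\det(\gamma''',\gamma^{(5)})(u_0)$; thus these determinants detect whether the leading surviving exponent is $4$ or $5$, and a finite sequence of further coordinate changes (finite determinacy) yields $(w^3,w^4)$ or $(w^3,w^5)$. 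For $n=2$ the even-order part of the second component is a smooth function of the first component $w^2$ and is removed by a target diffeomorphism, so only the odd part $w\,Q(w^2)$ survives; in (1) its leading term is $w^3$ precisely when $\det(\gamma'',\gamma''')\ne0$, while in (2) the hypothesis $\gamma'''=c_1\gamma''$ kills the $w^3$ term and pushes the leading surviving term to $w^5$.

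The main obstacle, and the only genuinely delicate point, is the bookkeeping in case (2): the source reparametrization $w=u\cdot(\text{unit})^{1/2}$ is $u=Cw(1-\tfrac{c_1}{6}Cw+\cdots)$, and this mixes the fourth-order Taylor coefficient of the second component into the $w^5$ term, with the mixing weighted by the third-order degeneracy constant $c_1$. Carrying the expansion to fifth order shows that the coefficient of the surviving $w^5$ term is a nonzero multiple of $\det\!\bigl(\gamma''(u_0),\,3\gamma^{(5)}(u_0)-10c_1\gamma^{(4)}(u_0)\bigr)$, which explains why exactly this combination, rather than $\gamma^{(5)}$ alone, occurs; for $n=3,r=5$ no such correction arises because the corresponding fourth-order term is absent, so a plain determinant suffices. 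This leakage computation is where I would spend the most care.

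Finally, for the converse ``$r/n$-cusp $\Rightarrow$ conditions'', I would check that each displayed quantity is nonzero on the model $\Gamma_{n,r}$ by direct differentiation, and that its vanishing or nonvanishing is preserved under $\mathcal{A}$-equivalence. Writing $\tilde\gamma=\Phi\circ\gamma\circ\varphi^{-1}$ and differentiating, every correction term involves lower derivatives of $\gamma$ that vanish at $u_0$, so under the target diffeomorphism the relevant derivative vectors are simply multiplied by $D\Phi$ and under the source reparametrization they acquire powers of $\varphi'(u_0)$; hence each determinant transforms by a nonzero scalar at the jet level at which the lower-order conditions already hold. This invariance, together with the constructive reduction above, yields the equivalences (1)--(4), the underlying finite-determinacy and classification input being that of Bruce--Gaffney \cite{Bruce-Gaffney} and Porteous \cite{porteous}.
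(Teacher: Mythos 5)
The paper offers no proof of Fact~\ref{criteria_cusp}: it is quoted from the literature (Bruce--Gaffney, Porteous), so the only internal point of comparison is the proof of the analogous $7/2$-criterion in Appendix~\ref{sec:7/2}. Your argument for the multiplicity-two cases (1) and (2) follows essentially that proof's template --- invariance of the conditions under source and target diffeomorphisms, normalization of the first component, removal of the even part of the second component via the Whitney lemma, and recognition of the leading odd exponent --- and your ``leakage'' computation checks out: with $\gamma''(u_0)$ rotated onto the first axis and $y_j$ denoting the second component of $\gamma^{(j)}(u_0)$, the reparametrization $u=w-\tfrac{c_1}{6}w^2+\cdots$ produces the $w^5$-coefficient $\tfrac{1}{360}(3y_5-10c_1y_4)$, which is the displayed determinant up to the nonzero factor $\pm\|\gamma''(u_0)\|$. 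Two points need more care. First, in the converse direction your claim that every correction term involves lower derivatives of $\gamma$ that vanish at $u_0$ is false as stated: for instance $\bar\gamma'''=(\varphi')^3\gamma'''+3\varphi'\varphi''\gamma''$ has a correction proportional to the \emph{nonvanishing} vector $\gamma''$; the argument survives only because that correction lies in the first slot of the determinant and because $c_1$ must be allowed to transform (compare \eqref{k-tilde} in Lemma~\ref{lem:invariant-domain}, where $\bar k=k\varphi'+3\varphi''/\varphi'$), so the bookkeeping of Lemmas~\ref{lem:invariant-domain} and~\ref{lem:invariant-target} has to be redone for the $5/2$ combination $3\gamma^{(5)}-10c_1\gamma^{(4)}$ rather than waved at. Second, for the multiplicity-three cases the Whitney even-function trick is unavailable and your appeal to finite determinacy carries real weight: you need $(w^3,w^4\cdot\mathrm{unit})$ and $(w^3,w^5\cdot\mathrm{unit})$ to be $\mathcal{A}$-equivalent to $(w^3,w^4)$ and $(w^3,w^5)$, and in the latter case the gap exponent $w^7$ is not in the semigroup $\langle 3,5\rangle$ and must be removed through the extended tangent space, so an explicit appeal to the Bruce--Gaffney classification (as the paper itself makes) is the right way to close this step. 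With those two caveats your proposal is a correct route to the statement.
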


In the case of $7/2$-cusp,
we obtain a similar criterion,
which is proved in Appendix \ref{sec:7/2}.
This criterion is based on the work by Takagi \cite{Takagi}.

\begin{theorem}\label{thm:7/2}
Let $\gamma:I\to \R^2$ be a smooth curve
having a singular point $u_0\in I$.
Then, $\gamma$ has $7/2$-cusp at $u_0$ if and only if 
there exist real numbers $c_1,c_2  \in \R$
such that $\ga'''(u_0) = c_1\ga''(u_0)$,
$3\ga^{(5)}(u_0)-10c_1 \ga^{(4)}(u_0)=3c_2\ga''(u_0)$,
and 
$$
\det\left(\ga'', \ga^{(7)}-7c_1 \ga^{(6)}-\left( 7c_2-\frac{70}{3}c_1^3\right)\ga^{(4)} \right)(u_0) \ne 0.
$$
\end{theorem}

Recently, Matsushita \cite{Matsushita}
proved the following.
\begin{fact}[\cite{Matsushita}]
\label{fact:Matsushita}
Let $\gamma:I\to \R^2$ be a smooth curve
satisfying 
$\gamma'=\gamma''=\gamma'''=\vect{0}$
at $u_0\in I$.
We set 
\begin{multline*}
\delta
=-77\det(\gamma^{(4)}, \gamma^{(6)})(u_0)^2
+105 \det(\gamma^{(4)}, \gamma^{(5)})(u_0)\det(\gamma^{(5)}, \gamma^{(6)})(u_0)\\ 
+60\det(\gamma^{(4)}, \gamma^{(5)})(u_0)
\det(\gamma^{(4)}, \gamma^{(7)})(u_0).
\end{multline*}
Then,
\begin{enumerate}
\item\label{itm:45}
$\gamma$ has $5/4$-cusp at $u_0$
if and only if 
$\det(\gamma^{(4)},\gamma^{(5)})(u_0)\ne0$
and $\delta=0$.
\item\label{itm:457} 
$\gamma$ has $(4,5,\pm7)$-cusp at $u_0$
if and only if 
$\det(\gamma^{(4)},\gamma^{(5)})(u_0)\ne0$
and $\pm\delta>0$.
\end{enumerate}
\end{fact}

Let $u_0\in I$ be a point and
$\gamma=(x,z):I\to \R^2$ be a curve
with $x\ne0$ on $I$.
For $r/n\in Q$,
the curve $\gamma$ has $r/n$-cusp at $u_0$
if and only if 
$f_{\gamma,h}$ has $r/n$-cuspidal edge at $(u_0,v_0)$
for each real number $v_0$.
This is verified by 
$$
  \Phi \circ F_{n,r}(u,v)
  = ((u^n+1)\cos v,(u^n+1)\sin v,u^r)
  = f_{\gamma,h}(u,v),
$$
where $\gamma(u)=(u^n+1,u^r)$ and 
$\Phi : (x,y,z)\mapsto ((x+1)\cos y,(x+1)\sin y,z)$
is a diffeomorphism for $x\ne -1$
(cf.\ \cite{MSST19}).
Similarly, 
the curve $\gamma$ has $(4,5,\pm7)$-cusp at $u_0$
if and only if 
$f_{\gamma,h}$ has $(4,5,\pm7)$-cuspidal edge at $(u_0,v_0)$
for each real number $v_0$.

Now, we prove the following.

\begin{proposition}
\label{prop:criterion}
Let 
$\CC(s,\t)$
be the generic helicoidal $n$-type edge
given in Theorem \ref{thm:CE-3}.
A singular point
$p=(0,t)\in S(\CC)$ is 
\begin{enumerate}
\item
$3/2$-cuspidal edge
if and only if $n=2$ and $\RR'''(0) \neq 0$.
\item
$5/2$-cuspidal edge
if and only if
$n=2$, $\RR'''(0) = 0$ and $\RR^{(5)}(0) \neq 0$.
\item
$7/2$-cuspidal edge
if and only if
$n=2$, $\RR'''(0) =\RR^{(5)}(0) = 0$ and $\RR^{(7)}(0) \ne 0$.
\item
$4/3$-cuspidal edge
if and only if
$n=3$ and $\RR^{(4)}(0) \ne 0$.
\item
$5/3$-cuspidal edge
if and only if
$n=3$, $\RR^{(4)}(0) = 0$ and $\RR^{(5)}(0) \neq 0$.
\item
$5/4$-cuspidal edge
if and only if
$n=4$, $\RR^{(5)}(0) \ne 0$ and $W=0$.
\item
$(4,5,\pm7)$-cuspidal edge
if and only if
$n=4$, $\RR^{(5)}(0) \ne 0$ and $\pm W>0$.
\end{enumerate}
Here we set 
\begin{multline*}
W=
2304  \left(60 U^{(5)}(0) U^{(7)}(0)-77 U^{(6)}(0)^2\right) \rho^4 \\
+25 m^4  U(0)^2 U^{(5)}(0)^2 \left(101 U^{(5)}(0)^2-16 U^{(4)}(0) U^{(6)}(0)\right) \rho^2 \\
+1675 m^4 U(0)^2 U^{(5)}(0)^4 \left(m^2 U(0)^2-h^2\right),
\end{multline*}
where $\rho=\rho_{h,m}(0)$.
\end{proposition}

\begin{proof}
We let $n=k+1$.
Setting $v:=\theta(s,t)$,
we have that $(s,t)\mapsto (s,v)$ is a local diffeomorphism.
In this coordinate $(s,v)$, 
the generic helicoidal $(k+1)$-type edge
$\CC$ is written as
$\left( x(s) \cos v,\, x(s) \sin v,\, z(s)+hv\right)$.
Then,
$\CC$ has $r/n$-cuspidal edge at $(0,t)$
if and only if $\gamma(s)=(x(s),z(s))$
has $r/n$-cusp at $s=0$.
Dividing $x$ and $z$ by some constants, 
we may set
$$
  \gamma(s)
  = \left(  
  \lambda(s),~
  \int^s_0
  \frac{\ivar^k  \RR(\ivar)\sqrt{\lambda(\ivar)^2-m^2\RR(\ivar)^2\dR(\ivar)^2}
  }{\lambda(\ivar)^2}d\ivar
  \right),
$$
where $\lambda(s)=\sqrt{\RR(s)^2-\alpha}$ and $\alpha=h^2/m^2$.
Then
$$
  \gamma'(s)
  = \frac{s^k}{\lambda(s)^2}\left(
  \RR(s)\dR(s)\lambda(s),~  
  \RR(s)\sqrt{\lambda(s)^2-m^2\RR(s)^2\dR(s)^2}
  \right)
$$
holds.
We set 
$\Delta=\sqrt{U(0)^2-\alpha} \sqrt{U(0)^2-m^2 U(0)^2V(0)^2-\alpha}$.

Let us consider the case $k=1$.
Then $\gamma'(0)=0$.
By a direct calculation, we have
$
\det(\gamma''(0),\gamma'''(0))=
-U(0)^2 U'''(0)/\Delta.
$
Hence 
Fact \ref{criteria_cusp} \ref{itm:23} yields that 
$\gamma(s)$ has cusp at $s=0$
if and only if $U'''(0)\ne0$,
which yields (1).
When $U'''(0)=0$, we have $\gamma'''(0)=0$,
hence the constant $c_1$
in Fact \ref{criteria_cusp} \ref{itm:25} is zero.
By a direct calculation, we have
$
\det(\gamma''(0),\gamma^{(5)}(0))=
-U(0)^2 U^{(5)}(0)/\Delta.
$
Hence 
Fact \ref{criteria_cusp} \ref{itm:25} yields that 
$\gamma(s)$ has $5/2$-cusp at $s=0$
if and only if $U'''(0)=0$ and $U^{(5)}(0)\ne0$,
which yields (2).
When $U'''(0)=U^{(5)}(0)=0$, 
we have $\gamma'''(0)=\gamma^{(5)}(0)=0$,
hence the constants $c_1$, $c_2$
in Theorem \ref{thm:7/2} are both zero.
By a direct calculation, we have
$
\det(\gamma''(0),\gamma^{(7)}(0))=
-U(0)^2 U^{(7)}(0)/\Delta.
$
Hence 
Theorem \ref{thm:7/2} yields that 
$\gamma(s)$ has $7/2$-cusp at $s=0$
if and only if $U'''(0)=U^{(5)}(0)=0$ and $U^{(7)}(0)\ne0$,
which yields (3).

As in the case where $k=1$,
the case where $k$ is $2$ or $3$ can also be shown using 
Facts \ref{criteria_cusp} and \ref{fact:Matsushita}.
Hence we obtain (4)--(7).
\end{proof}

\begin{corollary}
\label{cor:Bour-CE}
We let $r/n\in Q\setminus\{5/4\}$.
Every generic helicoidal $r/n$-cuspidal edge
admits a local non-trivial isometric deformation
given by a $2$-parameter family of generic helicoidal $r/n$-cuspidal edges.
\end{corollary}

\begin{proof}
We have already constructed 
such the isometric deformation $\{f_{h,m}\}_{h,m}$
in Theorem \ref{thm:singular-Bour}.
Every generic helicoidal $n$-type edge
in the family $\{f_{h,m}\}_{h,m}$ has
the common first fundamental form
$\I$ as in \eqref{eq:metric-Bour-k}.
The condition of singular points
to be $r/n$-cuspidal edge
is given in Proposition \ref{prop:criterion} (1)--(5),
which depends only on $\I$.
Hence, singular points of an element in $\{f_{h,m}\}_{h,m}$
have the same singularity types,
which proves the assertion.
\end{proof}

In the cases of 
$5/4$-cuspidal edges and 
$(4,5\pm7)$-cuspidal edges,
the criteria given in Proposition \ref{prop:criterion} (6) and (7)
depend on $h$ and $m$.
The conditions $W>0$ or $W<0$
are preserved under small perturbations of $h,m$.
Hence, we have the following.

\begin{corollary}
\label{cor:Bour-45CE}
Every generic helicoidal $(4,5\pm7)$-cuspidal edge
admits a local non-trivial isometric deformation
given by a $2$-parameter family of 
generic helicoidal $(4,5\pm7)$-cuspidal edges.
\end{corollary}

In the case of $5/4$-cuspidal edge,
since both $\partial W/\partial h$
and $\partial W/\partial m$ do not vanish,
our Bour-type isometric deformation changes 
the singularity type.

In general, a frontal $f: \Sigma \to \R^3$
is called {\it generic}, if it has non-empty singular set $S(f)$,
every singular point is rank one ,
and the limiting normal curvature does not vanish
on $S(f)$. 
Let $f_1, f_2 : \Sigma \to \R^3$
be two generic frontals 
so that $f_1$ and $f_2$ are isometric.
Then $S(f_1)=S(f_2)$ holds.
It is known that,
if $f_1$ has a cuspidal edge singular point $p\in S(f_1)$,
then $f_2$ also has cuspidal edge at $p\in S(f_2)$
(\cite[Proposition 4]{HNUY}).
Such a property is known to hold for swallowtail,
cuspidal cross cap, and $5/2$-cuspidal edges
(\cite[Proposition 4]{HNUY} and \cite[Lemma 5.4]{Honda-Saji}).
Until now, we did not know whether isometric generic frontals are 
$\mathcal{A}$-equivalent or not. 
Our example of generic helicoidal $5/4$-cuspidal edges 
provides a counterexample to this.

\begin{corollary}
\label{cor:rigidity}
There exist generic frontals 
$f_1, f_2 : \Sigma \to \R^3$
such that $f_1$ is isometric to $f_2$,
but $f_1$ is not $\mathcal{A}$-equivalent to $f_2$
at a singular point $p\in \Sigma$.
\end{corollary}

\subsection{Extrinsicity of invariants}
\label{sec:extrinsic}

In \cite{NUY}, it is shown that
the limiting normal curvature $\kappa_\nu$ 
of generic real analytic cuspidal edges
is an extrinsic invariant,
by proving the existence of non-trivial isometric deformations
of such cuspidal edges.
In this subsection, as an application of our Bour-type theorem
(Theorems \ref{thm:CE-3} and \ref{thm:singular-Bour}),
we prove that 
several invariants of generic helicoidal finite type edges 
are extrinsic invariants
(Corollaries \ref{cor:nu-c} and \ref{cor:kappa-c}).

Let 
$\CC=\CC_{[U,h,m,\epsilon_0,\epsilon_1,\epsilon_2]}
:J\times \R\to \R^3$
be the generic helicoidal $n$-type edge
given in Theorem \ref{thm:CE-3},
where $n=k+1$.
The first fundamental form $\I$ of
$\CC$
is given by \eqref{eq:metric-Bour-k}.
Since $\I$ does not depend on $h,m$,
we can easily determine that
a given invariant of $\CC$
is intrinsic or not.
In fact, as seen in Lemmas \ref{lem:LNC} and \ref{lem:cuspidal-torsion},
the limiting normal curvature $\kappa_\nu$
and the cusp-directional torsion $\kappa_t$
of $\CC$ at a singular point $(0,t)$
depend on $h,m$.
Hence, we have the following.

\begin{corollary}\label{cor:nu-c}
The limiting normal curvature $\kappa_\nu$
and
the cusp-directional torsion $\kappa_t$
of generic helicoidal finite type edges
are extrinsic.
\end{corollary}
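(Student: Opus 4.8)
The plan is to prove extrinsicity straight from the definition: for the given generic helicoidal $n$-type edge $f$ I will exhibit surfaces isometric to $f$ at the singular point $p$ that realize different values of $|\kappa_\nu|$ and of $|\kappa_t|$. The engine is Theorem \ref{thm:singular-Bour}, which supplies a $2$-parameter family $\{f_{h,m}\}_{h,m}$ of generic helicoidal $n$-type edges deforming $f$ isometrically. The decisive feature, already emphasized in the paragraph preceding the corollary, is that their common first fundamental form $g=s^{2k}\,ds^2+U(s)^2\,dt^2$ in \eqref{eq:metric-Bour-k} does not involve $h$ or $m$; hence every $f_{h,m}$ is isometric to $f$ at $p$, and any invariant that genuinely moves with $(h,m)$ fails to be intrinsic.

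First I would record the explicit dependence supplied by Lemmas \ref{lem:LNC} and \ref{lem:cuspidal-torsion},
$$
  \kappa_\nu = \frac{\rho_{h,m}(0)}{m^2 U(0)^2}, \qquad
  \kappa_t = \frac{h}{m^2 U(0)^2},
$$
and assemble them into the map $\psi(h,m)=(\kappa_\nu,\kappa_t)$. The proof of Theorem \ref{thm:singular-Bour} already shows that the Jacobian determinant of $\psi$ equals $1/(m^3 \rho_{h,m}(0) U(0)^2)$, which never vanishes, so $\psi$ is a local diffeomorphism. Consequently $\psi$ carries every neighborhood of the base parameter $(h_0,m_0)$ onto an open subset of the $(\kappa_\nu,\kappa_t)$-plane containing the original value $(\kappa_\nu(p),\kappa_t(p))$.

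Finally I would extract the two conclusions from this openness. Since $f$ is generic, $\kappa_\nu(p)\neq0$, so an open interval of nearby $\kappa_\nu$-values lies on the same side of $0$; picking $(h,m)$ in it with $\kappa_\nu$ slightly perturbed yields $|\kappa_\nu(f_{h,m})|\neq|\kappa_\nu(f)|$, proving $\kappa_\nu$ extrinsic. Likewise the image contains a full interval of $\kappa_t$-values around $\kappa_t(p)$; if $\kappa_t(p)\neq0$ the same perturbation argument applies, while if $\kappa_t(p)=0$ (the surface-of-revolution case) any nearby parameter with $h\neq0$ gives $|\kappa_t(f_{h,m})|\neq0=|\kappa_t(f)|$, proving $\kappa_t$ extrinsic. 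Running this for every $n=k+1$ yields the statement for all generic helicoidal finite type edges. I do not expect a genuine obstacle here, since the substantive work is packaged in the local-diffeomorphism property of $\psi$; the only point requiring a moment's care is passing from ``different signed value'' to ``different absolute value,'' which is handled by $\kappa_\nu(p)\neq0$ together with the openness of the image of $\psi$.
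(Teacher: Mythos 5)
Your proposal is correct and follows essentially the same route as the paper: both rest on the observation that the first fundamental form \eqref{eq:metric-Bour-k} is independent of $(h,m)$ while Lemmas \ref{lem:LNC} and \ref{lem:cuspidal-torsion} show $\kappa_\nu$ and $\kappa_t$ vary with $(h,m)$ along the Bour family of Theorem \ref{thm:singular-Bour}. Your extra care in passing from ``the value changes'' to ``the absolute value changes'' (via the non-vanishing Jacobian of $\psi$ and the genericity $\kappa_\nu(p)\ne 0$) only makes explicit a step the paper leaves implicit.
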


In \cite{MSST}, invariants 
$\omega_{n,n+i}$ $(i=1,\dots,n-1)$
and $\beta_{n,2n}$
of $n$-type edges were introduced.
The invariant
$\omega_{n,n+i}$ is called the 
{\it $(n,n+i)$-cuspidal curvature\/}\footnote{%
In the case of $n=2$,
the $(2,3)$-cuspidal curvature $\omega_{2,3}$ was originally introduced 
in \cite{MSUY}
called the {\it cuspidal curvature\/} $\kappa_c$.
},
and 
$\beta_{n,2n}$ is called the {\it $(n,2n)$-bias\/}\footnote{%
In the case of $n=2$,
the $(2,4)$-bias $\beta_{2,4}$ was originally introduced 
in \cite{Honda-Saji}
called the {\it bias\/} $r_b$.
}.
We here review their definitions.
For a frontal $f : \Sigma \to \R^3$
having $n$-type edge at $p\in \Sigma$,
let $\xi$, $\eta$ be vector fields satisfying 
\eqref{itm:cri1} and \eqref{itm:cri2}.
For $i=1,\dots, n$, we set 
$$
   \omega_{n,n+i}(p) 
  = \frac{\|\xi{f}\|^{(n+i)/n}\det(\xi{f},\eta^{n}{f},\eta^{n+i}f)}
    {\|\xi{f}\times\eta^{n}{f}\|^{(2n+i)/n}}(p),
$$
Then, 
$\omega_{n,n+1}(p)$ does not depend on the choice of 
the pair $\xi$, $\eta$ of vector fields satisfying 
\eqref{itm:cri1} and \eqref{itm:cri2}.
For $i>1$,  if $\omega_{n,n+j}$ $(j=1,\dots,i-1)$ 
are identically zero on $S(f)$,
$\omega_{n,n+i}(p)$ does not depend on the choice of 
the pair $\xi$, $\eta$ of vector field satisfying 
\eqref{itm:cri1} and \eqref{itm:cri2}.
Then, $\omega_{n,n+i}(p)$ $(i=1,\dots,n-1)$ is called the 
{\it $(n,n+i)$-cuspidal curvature} of $f$ at $p$,
and 
$\omega_{n,2n}(p)$ is denoted by $\beta_{n,2n}(p)$
which is called the {\it $(n,2n)$-bias\/} of $f$ at $p$.
It holds that an $n$-type edge $f$ is a front at $p$
if and only if $\omega_{n,n+1}(p)\ne0$ (\cite{MSST}).
Here, we calculate 
the $(n,n+i)$-cuspidal curvature $\omega_{n,n+1}$
and 
the $(n,2n)$-bias $\beta_{n,2n}$
of $\CC$.

\begin{proposition}
\label{prop:cuspidal-curvature}
The $(n,n+i)$-cuspidal curvature $\omega_{n,n+i}$
$(i=1,\dots,n-1)$
and the $(n,2n)$-bias $\beta_{n,2n}$
of the generic helicoidal $n$-type edge
$\CC=\CC_{[U,h,m,\epsilon_0, \epsilon_1, \epsilon_2]}$
along the singular curve $\sigma(t)=(0,t)$
are respectively given by 
\begin{align}
\label{eq:omega-n}
  \omega_{n,n+i} 
  &= \epsilon_1 \epsilon_2 
  \frac{m^2 U(0) U^{(n+i)}(0)}{((n-1)!)^{(n+i)/n} \rho_{h,m} (0) },\\
\label{eq:beta-n}
\beta_{n,2n}
&= 
\frac{\epsilon_1 \epsilon_2}{\rho_{h,m} (0)}
\left(\frac{m^2 U(0) U^{(2n)}(0)}{((n-1)!)^2}-
\begin{pmatrix} 2 n-1\\ n\end{pmatrix}
\frac{h^2}{m^2 U(0)^2}\right),
\end{align}
where $\begin{pmatrix} 2 n-1\\ n\end{pmatrix}$ 
is the binomial coefficient.
\end{proposition}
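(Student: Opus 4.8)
The plan is to follow the computational strategy of Lemmas \ref{lem:cuspidal-torsion} and \ref{lem:criterion}, reducing the surface-theoretic determinants that define $\omega_{n,n+i}$ and $\beta_{n,2n}$ to ordinary $2\times 2$ determinants of the profile curve $\gamma(s)=(x(s),z(s))$, and then expressing those through $U$ and its derivatives. First I would pass to the coordinate $(s,v)$ with $v=\theta(s,t)$, in which $\CC(s,v)=(x(s)\cos v,\,x(s)\sin v,\,z(s)+hv)$ as in the proof of Lemma \ref{lem:criterion}. Here $\eta=\partial_s$ is a null vector field and $\xi=\partial_v$ is tangent to the singular set $\{s=0\}$, so $(\xi,\eta)$ satisfies \eqref{itm:cri1} and \eqref{itm:cri2} by Lemma \ref{lem2}. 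The advantage of this chart is that $\eta^m\CC=\CC_{s^m}=(x^{(m)}\cos v,\,x^{(m)}\sin v,\,z^{(m)})$, so the $s$-derivatives never touch $v$.

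A short cross-product computation then gives
$$\det(\xi\CC,\eta^n\CC,\eta^{n+i}\CC)(0,v)=-x(0)\det\bigl(\gamma^{(n)}(0),\gamma^{(n+i)}(0)\bigr),$$
while $\|\xi\CC\|^2=x(0)^2+h^2=m^2U(0)^2$, $\ \xi\CC\cdot\eta^n\CC=h\,z^{(n)}(0)$, and, using the identity $\rho_{h,m}(0)^2+m^4U(0)^2V(0)^2=x(0)^2$ (immediate from the definition of $\rho_{h,m}$), one checks $\|\xi\CC\times\eta^n\CC\|^2=((n-1)!)^2m^2U(0)^2$. Since rescaling $\xi\mapsto c\,\xi$ multiplies $\omega_{n,n+i}$ by $\sgn(c)$, the passage from the natural field $\partial_t$ (used for $\kappa_\nu,\kappa_t$) to $\partial_v$ accounts for exactly the factor $\epsilon_1$ in \eqref{eq:omega-n} and \eqref{eq:beta-n}; I would record this sign only at the very end.

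The core step is to evaluate $\det(\gamma^{(n)}(0),\gamma^{(n+i)}(0))$. Writing $x'=s^kP$, $z'=s^kQ$ with $P=m^2UV/x=A\cdot V$ and $Q=\epsilon_2 mU\rho_{h,m}/x^2=B\cdot\rho_{h,m}$, Leibniz gives $\gamma^{(n)}(0)=(n-1)!\,(P(0),Q(0))$ and $\gamma^{(n+i)}(0)=\tfrac{(n-1+i)!}{i!}(P^{(i)}(0),Q^{(i)}(0))$. For $1\le i\le n-1=k$ the derivatives $A^{(l)}(0)$ and $B^{(l)}(0)$ vanish for $1\le l\le k$ by Lemma \ref{lem2}, so only the top-order terms survive and
$$\det(\gamma^{(n)},\gamma^{(n+i)})(0)=(n-1)!\,\tfrac{(n-1+i)!}{i!}\,A(0)B(0)\bigl(V(0)\,\rho_{h,m}^{(i)}(0)-\rho_{h,m}(0)\,V^{(i)}(0)\bigr).$$
Differentiating $\rho_{h,m}^2=m^2U^2-h^2-m^4U^2V^2$ and invoking the same identity $\rho_{h,m}(0)^2+m^4U(0)^2V(0)^2=x(0)^2$, the bracket collapses to $-x(0)^2V^{(i)}(0)/\rho_{h,m}(0)$ (for $i=1$ this is an exact cancellation; for $2\le i\le k$ the remaining lower-order terms are precisely those that vanish once the preceding cuspidal curvatures vanish, which is the hypothesis under which $\omega_{n,n+i}$ is well defined). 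Substituting $V^{(i)}(0)=\tfrac{i!}{(n-1+i)!}U^{(n+i)}(0)$ and assembling the powers of $(n-1)!$ then yields \eqref{eq:omega-n}.

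The bias is the case $i=n$, and this is where I expect the main difficulty. Now $i=n=k+1>k$, so the Leibniz expansions of $P^{(n)}(0)$ and $Q^{(n)}(0)$ pick up an extra term from $A^{(n)}(0)$ and $B^{(n)}(0)$, which no longer vanish: $P^{(n)}(0)=A(0)V^{(n)}(0)+A^{(n)}(0)V(0)$ and $Q^{(n)}(0)=B(0)\rho_{h,m}^{(n)}(0)+B^{(n)}(0)\rho_{h,m}(0)$. The leading terms reproduce the $U^{(2n)}$ contribution exactly as above, with $\tfrac{(2n-1)!}{n!}=\binom{2n-1}{n}(n-1)!$ supplying the binomial coefficient, while the new terms contribute $V(0)\rho_{h,m}(0)\bigl(A(0)B^{(n)}(0)-B(0)A^{(n)}(0)\bigr)$; expressing $A^{(n)}(0),B^{(n)}(0)$ through $x^{(n)}(0)$ and hence through $h$ and $V(0)$ produces precisely the $-\binom{2n-1}{n}h^2/(m^2U(0)^2)$ correction in \eqref{eq:beta-n}. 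The hardest part of the whole argument is exactly this order-$2n$ bookkeeping: identifying which sub-leading Leibniz terms survive and verifying that they coalesce into the single clean $h^2$ term rather than an uncontrolled combination of higher $U$-derivatives.
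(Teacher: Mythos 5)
Your reduction to the profile curve is a genuinely different route from the paper's: the paper stays in the natural chart $(s,t)$ and computes the three\-/dimensional vectors $\CC_{s^{k+1+i}}(0,t)$ directly (via $\overline{\CC}$, $\vect{w}$ and, for the bias, an explicit correction vector $\vect{a}$), whereas you pass to $(s,v)$ with $v=\theta(s,t)$ and collapse everything to $2\times 2$ determinants of $\gamma=(x,z)$. For $\omega_{n,n+i}$ with $1\le i\le n-1$ this works: your intermediate identities are all correct ($\|\xi\CC\times\eta^{n}\CC\|^{2}=((n-1)!)^{2}m^{2}U(0)^{2}$, the collapse of the bracket to $-x(0)^{2}V^{(i)}(0)/\rho_{h,m}(0)$ via $\rho_{h,m}(0)^{2}+m^{4}U(0)^{2}V(0)^{2}=x(0)^{2}$, the sign $\epsilon_1$ from reorienting $\xi$), and one recovers \eqref{eq:omega-n} exactly, arguably more transparently than the paper does.

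The bias part, however, has a genuine gap, and it is not just bookkeeping. Passing to the chart $(s,v)$ silently replaces the null vector field: $\partial_s|_{(s,v)}=\partial_s|_{(s,t)}+c(s)\,s^{k}\,\partial_t$ with $c(0)=\epsilon_1\epsilon_2 h\rho_{h,m}(0)/(U(0)x(0)^{2})$, which is nonzero when $h\ne0$. For $i\le n-1$ this change is harmless, but at order $2n$ it contributes to $\eta^{2n}\CC(0,t)$ a nonzero multiple (proportional to $h$) of $\overline{\CC}_t(0,t)$, which is not in the span of $\xi\CC$ and $\eta^{n}\CC$; since $\det(\CC_t,\overline{\CC},\overline{\CC}_t)=h/m^{2}\ne0$, the determinant defining $\omega_{n,2n}$ genuinely changes. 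Carrying your own Leibniz scheme to completion (including the extra term in $V(0)\rho_{h,m}^{(n)}(0)-\rho_{h,m}(0)V^{(n)}(0)$ coming from $U^{(n)}(0)=(n-1)!\,V(0)\ne0$, which your sketch also omits) yields, in place of $-\binom{2n-1}{n}h^{2}/(m^{2}U(0)^{2})$, the quantity $-\binom{2n-1}{n}\,m^{6}h^{2}U(0)^{2}V(0)^{4}/x(0)^{4}$, which still depends on $V(0)$. A concrete test: for $n=2$, $U(s)=1+s^{2}/4$, $m=1$, $h=1/2$, the natural\-/chart computation gives $\beta_{2,4}=-3\sqrt{2}/4$, in agreement with \eqref{eq:beta-n}, while the profile\-/curve computation gives $-\sqrt{2}/12$. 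So the claimed coalescence into the single clean $h^{2}$ term does not occur along your route; to prove \eqref{eq:beta-n} one must keep the null field $\partial_s$ of the natural parameters and track the $\theta_s$\-/contributions of order $s^{k}$ --- which is precisely what the paper's correction vector $\vect{a}$ in \eqref{eq:s-2n} records.
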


\begin{proof}
We use the same notation as in 
the proof of Theorem \ref{thm:CE-3} and 
Lemma \ref{lem:cuspidal-torsion}.
We set $n=k+1$.
Since $\eta = \partial_s$ is a null vector field 
and $\xi = \partial_t$ is tangent to the singular set, 
\begin{equation}\label{eq:omega-i}
  \omega_{n,n+i}
  = \frac{1}{(k!)^{(2k+2+i)/(k+1)}\,U(0)}
  \det(\CC_{t} , \CC_{s^{k+1}} , \CC_{s^{k+1+i}})(0,t)
\end{equation}
holds,
where we used \eqref{eq:inner-product}.

With respect to $\omega_{n,n+1}$, 
as $\CC_s=s^k\overline{\CC}$, 
we have
$\CC_{s^{k+1}}(0,t)=k! \overline{\CC}(0,t)$
and 
$\CC_{s^{k+2}}(0,t)=U^{(k+2)}(0)\vect{w}(0,t)$
at $s=0$,
where we set 
$\overline{\CC}$ as \eqref{eq:bar-Psi},
and 
$$
 \vect{w}(s,t)=\frac{m^2 U}{\rho x}\Bigl(
        \rho \cos \theta - \epsilon_2h V \sin \theta,~
        \rho \sin \theta + \epsilon_2h V \cos \theta,~
        -\epsilon_2 m V x\Bigr).
$$
Then,
$
  \det(\CC_{t} , \CC_{s^{k+1}},\CC_{s^{k+2}})
  =\epsilon_1\epsilon_2k!m^2U^2U^{(k+2)}/\rho
$
holds at $s=0$.
Together with \eqref{eq:omega-i},
we obtain \eqref{eq:omega-n} with $i=1$.
With respect to $\omega_{n,n+2}$, 
by the condition $\omega_{n,n+1}=0$,
we have $U^{(k+2)}(0)=0$.
Then $\CC_{s^{k+3}}(0,t)=U^{(k+3)}(0)\vect{w}(0,t)$ holds,
and hence,
we obtain \eqref{eq:omega-n} with $i=2$.
Continuing this argument, 
for $i=2,\dots, n-1$,
we have that
$\omega_{n,n+1}=\dots=\omega_{n,n+i-1}=0$
if and only if 
$U^{(k+2)}=\dots=U^{(k+i)}=0$
at $s=0$.
Then $\CC_{s^{k+1+i}}=U^{(k+1+i)}\vect{w}$ holds
at $s=0$,
and hence,
we obtain \eqref{eq:omega-n}.

With respect to $\beta_{n,2n}$,
the condition
$\omega_{n,n+1}=\dots=\omega_{n,n+i}=0$
yields 
$U^{(k+2)}(0)=\dots=U^{(2k+1)}(0)=0$.
Let us set
$$
 \vect{a}(s,t)=\frac{1}{mU x}\left(
        a_1 \cos \theta +\frac{ha_2}{x} \sin \theta,~
        a_1 \sin \theta -\frac{ha_2}{x} \cos \theta,~
        a_2\right),
$$
where 
$
a_1=\frac{k!}{mUx^2}\left( 2m^4U^2V^2x^2-h^2\rho^2\right)$,
$a_2=\frac{\epsilon_2 m^2 k!UV}{x\rho}\left(2\rho^2+m^2h^2V^2\right).
$
Then,
$$
\CC_{s^{2k+2}}(0,t)
\equiv 
U^{(2k+2)}(0)\vect{w}(0,t)
+\frac{(2k+1)!}{(k+1)!}\vect{a}(0,t)
\mod \overline{\Psi}(0,t)
$$
holds.
Calculating
$\beta_{n,2n}
= \frac{1}{(k!)^3\,U(0)}
\det(\CC_{t} , \CC_{s^{k+1}} , \CC_{s^{2k+2}})(0,t)$,
we have \eqref{eq:beta-n}.
\end{proof}

\begin{corollary}\label{cor:kappa-c}
The $(n,n+i)$-cuspidal curvature $\omega_{n,n+i}$
and
the $(n,2n)$-bias $\beta_{n,2n}$
of generic helicoidal $n$-type edges
are extrinsic invariants.
\end{corollary}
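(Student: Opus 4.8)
The plan is to argue exactly as in Corollary \ref{cor:nu-c}. Both invariants in question are, by Proposition \ref{prop:cuspidal-curvature}, computed along the singular curve of the model $\CC_{[U,h,m,\epsilon_0,\epsilon_1,\epsilon_2]}$, whose first fundamental form $\I = s^{2k}ds^2+U(s)^2d\t^2$ depends only on $U$ and is completely independent of the pair $(h,m)$. Theorem \ref{thm:singular-Bour} furnishes, for a suitable model, a $2$-parameter isometric deformation $\{f_{h,m}\}_{h,m}$ realizing every nearby $(h,m)$ while fixing $\I$. Hence, to establish extrinsicity in the sense of Section \ref{sec:invs}, it suffices to exhibit a single generic helicoidal $n$-type edge for which the absolute value of $\omega_{n,n+i}$ (resp.\ $\beta_{n,2n}$) is non-constant as $(h,m)$ ranges over this family.

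First I would fix the profile function $U$ so that the invariant under consideration is well defined and the relevant non-degeneracy holds: for $\omega_{n,n+i}$, take $U$ with $U^{(n+1)}(0)=\cdots=U^{(n+i-1)}(0)=0$ and $U^{(n+i)}(0)\ne0$; for $\beta_{n,2n}$, take $U$ with $U^{(n+1)}(0)=\cdots=U^{(2n-1)}(0)=0$. The essential observation is that, as recorded in the proof of Proposition \ref{prop:cuspidal-curvature}, the vanishing of the lower-order cuspidal curvatures $\omega_{n,n+1}=\cdots=\omega_{n,n+i-1}=0$ is equivalent to the vanishing of $U^{(n+1)}(0),\dots,U^{(n+i-1)}(0)$. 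These are conditions on $U$ alone, hence intrinsic; since $\{f_{h,m}\}_{h,m}$ keeps $U$ fixed, the well-definedness of the invariant is automatically inherited by every member of the family.

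Next I would invoke the explicit formulas \eqref{eq:omega-n} and \eqref{eq:beta-n}. Passing to absolute values, the sign factor $\epsilon_1\epsilon_2$ disappears and one reads off
\[
  |\omega_{n,n+i}|
  = \frac{m^2 U(0)\,|U^{(n+i)}(0)|}{((n-1)!)^{(n+i)/n}\,\rho_{h,m}(0)},
\]
where $\rho_{h,m}(0)$ is as in \ref{eq:constants-CE}. Fixing $m=1$ and increasing $h$ within the range allowed by \ref{eq:constants-CE}, the denominator $\rho_{h,1}(0)=\sqrt{U(0)^2-h^2-U(0)^2V(0)^2}$ strictly decreases while the numerator stays constant, so $|\omega_{n,n+i}|$ strictly increases; in particular it is non-constant. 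Applying the same one-parameter variation to \eqref{eq:beta-n} changes $|\beta_{n,2n}|$ as well, since both the explicit $h^2$-term and the prefactor $1/\rho_{h,m}(0)$ depend non-trivially on $h$ (indeed $\beta_{n,2n}$ varies with $h$ even when $U^{(2n)}(0)=0$). Because each $f_{h,m}$ is isometric to the original edge at the singular point, this non-constancy shows that neither invariant is intrinsic.

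The only delicate point I anticipate is precisely the well-definedness just discussed: the higher cuspidal curvatures and the bias are defined only after the lower-order ones vanish identically along the singular curve, so a priori the deformation might move one out of the locus where the invariant makes sense. I expect this to be the main, though mild, obstacle, and it is dissolved by the remark above that the relevant vanishing conditions are purely conditions on $U$ and are therefore preserved throughout the isometric family $\{f_{h,m}\}_{h,m}$. Granting this, the strict monotonicity of $|\omega_{n,n+i}|$ and the non-constancy of $|\beta_{n,2n}|$ in $h$ complete the proof.
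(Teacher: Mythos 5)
Your argument is correct and follows essentially the same route as the paper: the corollary is read off from the explicit formulas \eqref{eq:omega-n} and \eqref{eq:beta-n} of Proposition \ref{prop:cuspidal-curvature}, which depend non-trivially on $(h,m)$ while the first fundamental form \eqref{eq:metric-Bour-k} of the family $\{f_{h,m}\}_{h,m}$ from Theorem \ref{thm:singular-Bour} does not. Your additional check that the vanishing conditions $U^{(n+1)}(0)=\dots=U^{(n+i-1)}(0)=0$ guaranteeing well-definedness of the higher invariants are conditions on $U$ alone, hence preserved throughout the deformation, is a point the paper leaves implicit but is entirely consistent with its proof of Proposition \ref{prop:cuspidal-curvature}.
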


\appendix
\section{Criterion for $7/2$-cusp}
\label{sec:7/2} 

In this appendix, we prove the following criterion for $7/2$-cusp
(Theorem \ref{thm:7/2-app}).
This criterion is based on the work by Takagi \cite{Takagi}.
In the following, the dot means $d/d t$ and we set 
$\ga^{(j)}:=d^{j}\ga/dt^{j}.$

\begin{theorem}\label{thm:7/2-app}
Let $\gamma(t)$ be a plane curve.
Then, $\gamma(t)$ has $7/2$-cusp at $t=c$ if and only if 
there exist real numbers $k, l \in \R$
such that
\begin{align}
\label{eq:27-1}
&\dot{\gamma}(c)=\vect{0},\quad
\ddot{\gamma}(c)\ne\vect{0},\quad
\dddot{\ga}(c) = k\ddot{\ga}(c),\quad
\ga^{(5)}(c)-\frac{10}{3}k \ga^{(4)}(c)=l\ddot{\ga}(c),\\
\label{eq:27-2}
&\det\left(\ddot{\ga}(c), \ga^{(7)}(c)-7k \ga^{(6)}(c)-\left( 7l-\frac{70}{3}k^3\right)\ga^{(4)}(c) \right) \ne 0.
\end{align}
\end{theorem}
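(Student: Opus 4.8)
The plan is to prove the equivalence by reducing $\gamma$ to a normal form in which the three conditions in \eqref{eq:27-1}--\eqref{eq:27-2} become transparent. First I would take $c=0$ without loss of generality. The first two requirements $\dot\gamma(0)=\vect{0}$ and $\ddot\gamma(0)\ne\vect{0}$ say precisely that $0$ is a singular point of multiplicity $2$ (cf.\ Example \ref{ex:cusp}), which is the multiplicity of any $7/2$-cusp. The strategy is then to observe that both the property of being a $7/2$-cusp (an $\A$-invariant notion) and the existence of a pair $(k,l)$ satisfying \eqref{eq:27-1}--\eqref{eq:27-2} are preserved under (i) orientation-preserving reparametrizations of the source fixing $0$ and (ii) affine changes of the target $\R^2$; granting this, I may compute in any convenient gauge.

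Using a target rotation to align $\ddot\gamma(0)$ with the first axis and a source reparametrization normalizing the first coordinate, I would reduce to the semi-normal form $\gamma(s)=(s^2,g(s))$ with $g'(0)=0$. A direct differentiation shows that in this gauge \eqref{eq:27-1} forces $k=l=0$ together with $g'''(0)=g^{(5)}(0)=0$, while \eqref{eq:27-2} becomes $2\,g^{(7)}(0)\ne0$. Hence the conditions are equivalent to $g$ having, in its odd-order part, a zero of order exactly $7$ (the even-order derivatives $g''(0),g^{(4)}(0),g^{(6)}(0)$ remaining unconstrained). Applying the target shear $(X,Y)\mapsto(X,\,Y-p(X))$ with $p(s^2)$ equal to the even part of $g$ removes all even-order terms, yielding $\gamma(s)=(s^2,\,a_7 s^7+\cdots)$ with $a_7\ne0$ and only odd powers; by the finite determinacy of $(2,2m+1)$-cusps (cf.\ \cite{Bruce-Gaffney}) this germ is $\A$-equivalent to $\Gamma_{2,7}(s)=(s^2,s^7)$, i.e.\ a $7/2$-cusp. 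Reading this computation forward gives the ``if'' direction, and reading it backward, together with the invariance in the first paragraph, gives the ``only if'' direction.

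The hard part will be establishing the invariance under reparametrization asserted in (i): one must show that if \eqref{eq:27-1}--\eqref{eq:27-2} hold for $(k,l)$ then, after a reparametrization $t=t(\tau)$ fixing the singular point, they again hold for a suitable transformed pair $(\tilde k,\tilde l)$. This requires expanding $\tilde\gamma^{(j)}=(d/d\tau)^{j}(\gamma\circ t)$ at the singular point through order $7$ via Fa\`a di Bruno's formula; because $\dot\gamma=\vect{0}$ there, the leading terms simplify and one finds, for instance, $\tilde{\ddot\gamma}=t'^2\ddot\gamma$ and $\tilde{\dddot\gamma}=t'^3\dddot\gamma+3t't''\ddot\gamma$, whence $\tilde k=t'k+3t''/t'$. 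The delicate point is to verify that the combination $\gamma^{(7)}-7k\gamma^{(6)}-(7l-\tfrac{70}{3}k^{3})\gamma^{(4)}$ transforms by a nonzero scalar multiple modulo $\ddot\gamma$; the cubic correction $-\tfrac{70}{3}k^{3}$ is exactly what is needed to absorb the cross terms produced by $t'',t''',\dots$, so that the determinant in \eqref{eq:27-2} changes only by a nonzero factor. This covariance computation, which is the substance of Takagi's method \cite{Takagi}, is the one laborious step; once it is in place, the normal-form reduction above completes the proof.
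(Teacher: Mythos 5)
Your plan is sound and, at its core, follows the same route as the paper's Appendix~\ref{sec:7/2}: the decisive step in both is the covariance of the criterion under source reparametrizations (the paper's Lemma~\ref{lem:invariant-domain}), whose transformation law $\bar{k}=k\varphi'+3\varphi''/\varphi'$ and the absorbing role of the $-\tfrac{70}{3}k^{3}$ term you predict correctly, followed by a reduction to the semi-normal form $(s^{2},g(s))$ using Whitney's even-function lemma. Where you genuinely diverge is on the target side: the paper proves invariance of the criterion under \emph{arbitrary} local diffeomorphisms of $\R^2$ (Lemma~\ref{lem:invariant-target}), which makes the ``only if'' direction an immediate consequence of the fact that $\Gamma_{2,7}$ satisfies the conditions with $k=l=0$; you claim only affine target invariance and compensate by classifying the germs $(s^{2},g(s))$ via the shear removing the even part and an appeal to finite determinacy, reading the normal-form computation backward. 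This works, but it silently uses the mutual distinctness of the $\mathcal{A}$-classes of the $(2,3)$-, $(2,5)$-, $(2,7)$-, \dots cusps to rule out the other possible vanishing orders of the odd part of $g$; with the paper's stronger Lemma~\ref{lem:invariant-target} that extra input is unnecessary. For the converse the paper replaces your even-part shear and determinacy appeal by an explicit cubic shear $\Phi(x,y)=\bigl(x,\,y-\tfrac{m_1}{a(0)}x-\tfrac{m_2}{2a(0)^3}x^2-\tfrac{m_3}{24a(0)^5}x^3\bigr)$ followed by an elementary normal-form statement (Lemma~\ref{lemma:2}) proved directly from Whitney's lemma, so the whole argument stays self-contained. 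Both routes stand or fall with the Fa\`a di Bruno computation you defer as ``the one laborious step''; that computation occupies Lemmas~\ref{lem:invariant-domain} and~\ref{lem:invariant-target} and is indeed the bulk of the appendix, so your proposal is correct in outline but leaves precisely the substantive verification unperformed.
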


First, we prove this criterion is invariant under 
coordinate changes in the domain.
In the following, the prime means $d/d u$ and we set 
$\ga^{[j]}:=d^{j}\ga/du^{j}.$

\begin{lemma}\label{lem:invariant-domain}
Let $\gamma(t)$ $(t\in I)$ be a plane curve satisfying 
\eqref{eq:27-1} and \eqref{eq:27-2}
for real numbers $k, l \in \R$.
Let $t = \varphi(u)$ be a diffeomorphism 
from an interval $J$ to $I$.
We set $\bar{c} := \varphi(c)$ and $\bar{\gamma}(u) := \gamma \circ t(u)$.
Then, there exist real numbers $\bar{k}, \bar{l} \in \R$
such that
\begin{align}
\label{eq:not35-u}
&\bar{\gamma}'(\bar{c})=\vect{0},\quad
\bar{\gamma}''(\bar{c})\ne\vect{0},\quad
\bar{\gamma}'''(\bar{c}) = \bar{k}\bar{\gamma}''(\bar{c}),\quad
\bar{\gamma}'^{[5]}(\bar{c})
-\frac{10}{3}\bar{k} \bar{\gamma}^{[4]}(\bar{c})=\bar{l}\bar{\gamma}''(\bar{c}),\\
\label{eq:27-u}
&\det\left(\bar{\gamma}''(\bar{c}), 
\bar{\gamma}^{[7]}(\bar{c})
-7\bar{k} \bar{\gamma}^{[6]}(\bar{c})
-\left( 7\bar{l}-\frac{70}{3}\bar{k}^3\right)\bar{\gamma}^{[4]}(\bar{c}) \right) \ne 0.
\end{align}
\end{lemma}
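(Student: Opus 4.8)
The plan is to expand the derivatives of $\bar{\gamma}$ at $\bar{c}$ by the Faà di Bruno formula and to read off $\bar{k}$ and $\bar{l}$ one degree at a time, using crucially that $\gamma'(c)=\vect{0}$ forces every term containing a first derivative of $\gamma$ to vanish. Put $a:=\varphi'(\bar{c})$, which is nonzero since $\varphi$ is a diffeomorphism (and $\varphi(\bar{c})=c$, so every $\gamma^{(j)}$ below is evaluated at $c$). The chain rule gives $\bar{\gamma}'(\bar{c})=a\,\gamma'(c)=\vect{0}$ and $\bar{\gamma}''(\bar{c})=a^2\,\gamma''(c)+\varphi''(\bar{c})\,\gamma'(c)=a^2\,\gamma''(c)$, so $\bar{\gamma}''(\bar{c})\ne\vect{0}$ and, more importantly, $\bar{\gamma}''(\bar{c})$ and $\gamma''(c)$ span the same line. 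This is the structural fact driving the whole proof: under \eqref{eq:27-1}, each $\bar{\gamma}^{[m]}(\bar{c})$ is a Faà di Bruno combination of $\gamma^{(2)}(c),\dots,\gamma^{(m)}(c)$ whose coefficients are polynomials in the derivatives of $\varphi$ at $\bar{c}$.

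At degree three, $\bar{\gamma}^{[3]}(\bar{c})=a^3\,\gamma'''(c)+3a\,\varphi''(\bar{c})\,\gamma''(c)$; substituting $\gamma'''(c)=k\,\gamma''(c)$ shows this is a multiple of $\gamma''(c)$, hence of $\bar{\gamma}''(\bar{c})$, which defines $\bar{k}=ak+3\varphi''(\bar{c})/a$ and establishes the third relation of \eqref{eq:not35-u}. Next I would expand $\bar{\gamma}^{[4]}(\bar{c})$ and $\bar{\gamma}^{[5]}(\bar{c})$ and form $\bar{\gamma}^{[5]}(\bar{c})-\frac{10}{3}\bar{k}\,\bar{\gamma}^{[4]}(\bar{c})$. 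Feeding in $\gamma'''(c)=k\,\gamma''(c)$ and the hypothesis $\gamma^{(5)}(c)-\frac{10}{3}k\,\gamma^{(4)}(c)=l\,\gamma''(c)$, together with the value of $\bar{k}$ just obtained, the transverse (i.e.\ $\gamma^{(4)}(c)$) component is forced to cancel, so the combination is again a multiple of $\gamma''(c)$; reading off the coefficient defines $\bar{l}$ and gives the fourth relation of \eqref{eq:not35-u}.

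The main obstacle is the final condition \eqref{eq:27-u}, which requires carrying the expansion out to degree seven. Writing $W:=\gamma^{(7)}(c)-7k\,\gamma^{(6)}(c)-(7l-\frac{70}{3}k^3)\gamma^{(4)}(c)$ and $\bar{W}$ for the corresponding combination built from $\bar{\gamma}$, $\bar{k}$, $\bar{l}$, I would show
$$
\bar{W}\equiv a^7\,W \pmod{\langle\gamma''(c)\rangle}.
$$
The content of the lemma is precisely that the non-covariant terms — those carrying the derivatives $\varphi'',\varphi''',\dots$ — all fall into the span of $\gamma''(c)$ once $\bar{k}$ and $\bar{l}$ take the values forced above; this is the algebraic reason the coefficients $7$ and $7l-\frac{70}{3}k^3$ are chosen as they are. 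Granting this and using $\bar{\gamma}''(\bar{c})=a^2\gamma''(c)$, the alternating property of $\det$ gives
$$
\det\bigl(\bar{\gamma}''(\bar{c}),\,\bar{W}\bigr)=a^9\,\det\bigl(\gamma''(c),\,W\bigr),
$$
which is nonzero by \eqref{eq:27-2} because $a\ne0$; this is \eqref{eq:27-u}, and \eqref{eq:not35-u} was established along the way. In practice the cleanest bookkeeping for degrees six and seven is to compute modulo the line $\langle\gamma''(c)\rangle$ from the outset, where only the transverse scalar components survive and the Faà di Bruno coefficients collapse dramatically.
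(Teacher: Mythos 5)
Your proposal is correct and follows essentially the same route as the paper: chain-rule (Fa\`a di Bruno) expansion of $\bar{\gamma}^{[m]}$ at $\bar{c}$, reading off $\bar{k}=k\varphi'+3\varphi''/\varphi'$ and $\bar{l}$ from degrees $3$ and $5$, and then working modulo the line spanned by $\gamma''(c)$ to obtain $\det(\bar{\gamma}'',\bar{W})=(\varphi')^{9}\det(\gamma'',W)\ne0$. The one place you write ``granting this'' --- the cancellation of all $\varphi'',\varphi''',\dots$ terms in $\bar{W}$ into $\langle\gamma''(c)\rangle$ at degrees $6$ and $7$ --- is exactly the explicit computation the paper carries out, so to make the argument complete you would still need to perform that verification rather than assert it.
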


\begin{proof}
Without loss of generality, we may assume that $c=\bar{c}=0$.
Since $\bar{\gamma}'(u)=\varphi'(u)\dot{\ga}(\varphi(u))$,
we have $\bar{\gamma}'(0)=\vect{0}$.
As $\bar{\gamma}''(0)=\varphi'(0)^2\ddot{\ga}(0)$
and 
$\varphi'(0)$ does not vanish, 
$\bar{\gamma}''(0)\ne\vect{0}$ holds.
By a direct calculation, 
we have
$\bar{\ga}'''(0)
=\varphi'(0)^3\dddot{\ga}(0)+3\varphi'(0)\varphi''(0)\ddot{\ga}(0)$.
Since $\dddot{\ga}(0)=k\ddot{\ga}(0)$, we obtain
\begin{equation}\label{k-tilde}
\bar{\ga}'''(0)=\bar{k}\bar{\ga}''(0)\qquad
\left(
\bar{k}:=k\varphi'(0)+3\frac{\varphi''(0)}{\varphi'(0)}
\right).
\end{equation}
Hence we have the first, second and third equations
in \eqref{eq:not35-u}.

With respect to the fourth equation in \eqref{eq:not35-u},
by direct calculation, we have
\begin{align*}
\bar{\ga}^{[4]}(0)
&= (\varphi')^4\ga^{(4)} +\bigl(6k(\varphi')^2\varphi''+3(\varphi'')^2+4\varphi'\varphi'''\bigr)\ddot{\ga},\\
\bar{\ga}^{[5]}(0)
&=(\varphi')^5\ga^{(5)}+10(\varphi')^3\varphi''\ga^{(4)} 
\\&\hspace{10mm} 
+\bigl(15k\varphi'(\varphi'')^2 + 10k(\varphi')^2\varphi'''+10\varphi''\varphi'''+5\varphi'\varphi^{[4]}\bigr)\ddot{\ga},
\end{align*}
where the right hand sides are evaluated at $t=u=0$.
Hence, setting
\begin{multline}\label{l-tilde}
\bar{l}=
l(\varphi')^3-20k^2\varphi'\varphi''-k\left(55\frac{(\varphi'')^2}{\varphi'}+\frac{10}{3}\varphi'''\right)\\
-\frac{5}{(\varphi')^3}\bigl(6(\varphi'')^3+6\varphi'\varphi''\varphi'''-(\varphi')^2\varphi^{[4]}\bigr),
\end{multline}
where the right hand side is evaluated at $u=0$,
we have the fourth equation in \eqref{eq:not35-u}.

Next, we prove \eqref{eq:27-u}.
In the following,
$A\equiv B$
means that
$A- B$ is a scalar multiple of 
$\ddot{\ga}(0)$.
By direct calculation,
{\allowdisplaybreaks
we have
\begin{align*}
\bar{\ga}^{[6]}(0)
&\equiv(\varphi')^6\ga^{(6)}(0)+\bigl(50k(\varphi')^4\varphi''+
45(\varphi'\varphi'')^2+20(\varphi')^3\varphi'''\bigr)\ga^{(4)}(0),\\
\bar{\ga}^{[7]}(0)
&\equiv(\varphi')^7\ga^{(7)}(0)
+21(\varphi')^5\varphi''\ga^{(6)}(0)
+\biggl(350k(\varphi')^3(\varphi')^2
+\frac{350}{3}k(\varphi')^4\varphi'''
\\&\hspace{10mm} 
+105\varphi'(\varphi'')^3
+210(\varphi')^2\varphi''\varphi'''
+35(\varphi')^3\varphi^{[4]}\biggr)\ga^{(4)}(0),
\end{align*} 
where the right hand sides are evaluated at $u=0$.
Then, we obtain 
\begin{align*}
&\det\left(\bar{\ga}''(0) , 
\bar{\ga}^{[7]}(0)-7\bar{k}\bar{\ga}^{[6]}(0)
-\left(7\bar{l}-\frac{70}{3}\bar{k}^3\right)\bar{\ga}^{[4]}(0)\right) \\
&=(\varphi'(0))^9\det\Bigl(\ddot{\ga}(0) ,  \ga^{(7)}(0)-7k\ga^{(6)}(0)
-\Bigl(7l-\frac{70}{3}k^3\Bigr)\ga^{(4)}(c)\Bigr) \ne 0.
\end{align*}}
Therefore, we have \eqref{eq:27-u}
and \eqref{eq:not35-u}.
\end{proof}

Next, we prove the criterion in Theorem \ref{thm:7/2-app}
is invariant under 
coordinate changes in the target space $\R^2$.

\begin{lemma}\label{lem:invariant-target}
Let $\gamma(t)$ $(t\in I)$ be a plane curve satisfying 
\eqref{eq:27-1} and \eqref{eq:27-2}
for real numbers $k, l \in \R$.
Let $\Phi $ be a local diffeomorphism of $\R^2$.
We set $\Gamma(t) := \Phi\circ \gamma(t)$.
Then, 
\begin{align}
\label{eq:not35-target}
&\dot{\Gamma}(c)=\vect{0},\quad
\ddot{\Gamma}(c)\ne\vect{0},\quad
\dddot{\Gamma}(c) = k\ddot{\Gamma}(c),\quad
\Gamma^{(5)}(c)
-\frac{10}{3}k \Gamma^{(4)}(c)=l\ddot{\Gamma}(c),\\
\label{eq:27-target}
&\det\left(\ddot{\Gamma}(c), 
\Gamma^{(7)}(c)
-7k \Gamma^{(6)}(c)
-\left( 7l-\frac{70}{3}k^3\right)\Gamma^{(4)}(c) \right) \ne 0.
\end{align}
\end{lemma}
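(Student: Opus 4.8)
The plan is to compute the derivatives of $\Gamma=\Phi\circ\gamma$ at $t=c$ directly from the chain rule (Fa\`a di Bruno's formula), exploiting the hypothesis $\dot\gamma(c)=\vect{0}$. Without loss of generality I would assume $c=0$ and write $L:=d\Phi_{\gamma(0)}$ for the differential of $\Phi$ at $\gamma(0)$; since $\Phi$ is a local diffeomorphism, $L$ is a linear isomorphism of $\R^2$, so $\det L\ne0$. Let $d^{j}\Phi$ denote the symmetric $j$-th derivative of $\Phi$ at $\gamma(0)$. In the Fa\`a di Bruno expansion of $\Gamma^{(m)}(0)$ each summand is indexed by a partition of $\{1,\dots,m\}$ and carries one factor $\gamma^{(|B|)}(0)$ for every block $B$; because $\dot\gamma(0)=\vect{0}$, every partition possessing a block of size one contributes $\vect{0}$, so only partitions into blocks of size at least two survive. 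This already yields $\dot\Gamma(0)=\vect{0}$, $\ddot\Gamma(0)=L\ddot\gamma(0)\ne\vect{0}$ (as $L$ is injective and $\ddot\gamma(0)\ne\vect{0}$), and $\dddot\Gamma(0)=L\dddot\gamma(0)=kL\ddot\gamma(0)=k\ddot\Gamma(0)$, which are the first three assertions of \eqref{eq:not35-target}.

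For the remaining relations I would record the surviving Fa\`a di Bruno terms up to order seven. Writing $a:=\ddot\gamma(0)$ and $b:=\gamma^{(4)}(0)$, the partitions into blocks of size $\ge 2$ give, for instance,
\begin{align*}
\Gamma^{(4)}(0)&=Lb+3\,d^{2}\Phi(a,a),\\
\Gamma^{(5)}(0)&=L\gamma^{(5)}(0)+10\,d^{2}\Phi(\dddot\gamma(0),a),
\end{align*}
and similarly $\Gamma^{(6)}(0)$ and $\Gamma^{(7)}(0)$ equal $L$ applied to $\gamma^{(6)}(0)$ and $\gamma^{(7)}(0)$ plus explicit combinations of $d^{2}\Phi$ and $d^{3}\Phi$ evaluated on $a$, $b$, $\dddot\gamma(0)$, $\gamma^{(5)}(0)$, with coefficients equal to the number of set partitions of each block-size type. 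Substituting the hypotheses $\dddot\gamma(0)=ka$ and $\gamma^{(5)}(0)=\tfrac{10}{3}kb+la$ into these expressions, the fourth relation of \eqref{eq:not35-target} follows because the $d^{2}\Phi(a,a)$ terms in $\Gamma^{(5)}(0)-\tfrac{10}{3}k\,\Gamma^{(4)}(0)$ cancel, leaving $L\bigl(\gamma^{(5)}(0)-\tfrac{10}{3}k\,\gamma^{(4)}(0)\bigr)=L(la)=l\,\ddot\Gamma(0)$.

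The decisive step is \eqref{eq:27-target}, and here the specific coefficients $7k$ and $7l-\tfrac{70}{3}k^{3}$ do the work: they are tuned precisely so that \emph{all} the nonlinear contributions, namely those involving $d^{2}\Phi$ and $d^{3}\Phi$, cancel from
\[
\Gamma^{(7)}(0)-7k\,\Gamma^{(6)}(0)-\Bigl(7l-\tfrac{70}{3}k^{3}\Bigr)\Gamma^{(4)}(0)
\]
after the substitutions $\dddot\gamma(0)=ka$ and $\gamma^{(5)}(0)=\tfrac{10}{3}kb+la$; concretely, the $d^{2}\Phi(b,a)$, $d^{2}\Phi(a,a)$ and $d^{3}\Phi(a,a,a)$ terms each cancel on account of the matching partition multiplicities. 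Thus this combination equals $L$ applied to $\gamma^{(7)}(0)-7k\,\gamma^{(6)}(0)-\bigl(7l-\tfrac{70}{3}k^{3}\bigr)\gamma^{(4)}(0)$, and since $\ddot\Gamma(0)=La$ and $\det(Lu,Lv)=(\det L)\det(u,v)$ for $u,v\in\R^2$, I obtain
\begin{align*}
&\det\Bigl(\ddot\Gamma(0),\,\Gamma^{(7)}(0)-7k\,\Gamma^{(6)}(0)-\bigl(7l-\tfrac{70}{3}k^{3}\bigr)\Gamma^{(4)}(0)\Bigr)\\
&\qquad=(\det L)\,\det\Bigl(a,\,\gamma^{(7)}(0)-7k\,\gamma^{(6)}(0)-\bigl(7l-\tfrac{70}{3}k^{3}\bigr)\gamma^{(4)}(0)\Bigr).
\end{align*}
Since $\det L\ne0$ and the right-hand determinant is nonzero by \eqref{eq:27-2}, the left-hand side is nonzero, which is \eqref{eq:27-target}. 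The only genuine obstacle is the bookkeeping of the Fa\`a di Bruno multiplicities at orders six and seven; once those counts are in hand, the cancellations are forced by the chosen coefficients, and this forced cancellation is exactly the content of the invariance asserted by the lemma.
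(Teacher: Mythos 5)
Your proposal is correct and follows essentially the same route as the paper: a direct chain-rule computation of the derivatives of $\Phi\circ\gamma$ at the singular point, showing that the particular combination $\Gamma^{(7)}-7k\Gamma^{(6)}-(7l-\tfrac{70}{3}k^3)\Gamma^{(4)}$ is carried to $d\Phi$ applied to the corresponding combination for $\gamma$, and then using multiplicativity of the determinant; the paper merely organizes the same bookkeeping via the Jacobian $J(t)=J\circ\gamma(t)$ and the relations $\dot J(0)=0$, $\dddot J(0)=k\ddot J(0)$, $J^{(5)}(0)-\tfrac{10}{3}kJ^{(4)}(0)=l\ddot J(0)$, rather than via Fa\`a di Bruno partition counts. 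Your deferred multiplicities at orders six and seven ($15,10,15$ for $m=6$ and $21,35,105$ for $m=7$) do produce exactly the asserted cancellations, so the argument is complete once they are written out.
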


\begin{proof}
Without loss of generality, we may assume that $c=0$.
We denote by $\Phi(x,y)=(\Phi _{1}(x, y), \Phi _{2}(x, y) )$
the local diffeomorphism $\Phi $ of $\R^2$.
The Jacobian matrix of $\Phi(x, y)$ is written as
$$
J(x, y)
=
\begin{pmatrix} \partial \Phi_{1}/\partial x & 
\partial \Phi_{1}/\partial y\\
\partial \Phi_{2}/\partial x & \partial \Phi_{2}/\partial y
\end{pmatrix}.
$$
Setting $J(t):=J \circ \ga(t)$,
we have $\dot{\Gamma}(t)=J(t)\dot{\ga}(t)$,
and hence, $\dot{\Gamma}(0)=\vect{0}$ holds.
Since 
\begin{equation}\label{eq:Jdot}
\dot{J}(t)
=\dot{x}(t)J_{x}(t)+\dot{y}(t)J_{y}(t),
\end{equation} 
we obtain $\dot{J}(0)=0$,
$
\ddot{\Gamma}(0)=J(0)\ddot{\ga}(0)$,
and 
$
\dddot{\Gamma}(0)=J(0)\dddot{\ga}(0).
$
By \eqref{eq:27-1}, we have 
$
\dddot{\Gamma}(0)=k\,\ddot{\Gamma}(0).
$
Next, by a calculation similar to that in \eqref{eq:Jdot},
$\ddot{J}(0) = J_{x}(0)\ddot{x}(0)+J_{y}(0)\ddot{y}(0)$
holds.
By a direct calculation, we have
\begin{equation}\label{eq:G45}
\Gamma^{(4)}(0)=J\ga^{(4)}+3\ddot{J}\ddot{\ga},\qquad
\Gamma^{(5)}(0)=J\ga^{(5)}+6\ddot{J}\dddot{\ga}+4\dddot{J}\ddot{\ga},
\end{equation}
where the right hand sides are evaluated at $t=0$.
By $\dot{x}(0) = \dot{y}(0) = 0$, we obtain
$\ddot{J}(0) = J_{x}(0)\ddot{x}(0)+J_{y}(0)\ddot{y}(0)$.
Since $\dot{J}(0)=\dot{J_{x}}(0)=\dot{J_{y}}(0)=0$,
we have 
\begin{equation}\label{eq:J-ddd}
\dddot{J}(0) = k\ddot{J}(0).
\end{equation}
By substituting this into $\Gamma^{(5)}(0)$ in \eqref{eq:G45}, 
we obtain 
$
\Gamma^{(5)}(0)=J(0)\ga^{(5)}(0)+10k\ddot{J}(0)\ddot{\ga}(0).
$
By \eqref{eq:27-1}, 
$$
\Gamma^{(5)}(0)-\frac{10}{3}k \Gamma^{(4)}(0)
=l\,\ddot{\Gamma}(0)
$$
holds, and hence, we have \eqref{eq:not35-target}.

With respect to \eqref{eq:27-target},
since $\dot{\ga}(0)=\vect{0}$, $\dot{J}(0)=0$, $\dddot{J}(0)=k\ddot{J}(0)$, 
and $\dddot{\ga}(0)=k\ddot{\ga}(0)$, 
we have 
\begin{align}
\label{eq:Ga-6}
\Gamma^{(6)}(0) 
&= J\ga^{(6)}+10\ddot{J}\ga^{(4)} +10k^2\ddot{J}\ddot{\ga}+5J^{(4)}\ddot{\ga},\\
\label{eq:Ga-7}
\Gamma^{(7)}(0) 
&= J\ga^{(7)}+15\ddot{J}\ga^{(5)}+20k\ddot{J}\ga^{(4)} +15kJ^{(4)}\ddot{\ga}+6J^{(5)}\ddot{\ga},
\end{align}
where the right hand sides are evaluated at $t=0$.
In a similar way as in \eqref{eq:J-ddd},
we have
$\dddot{J_{x}}(0)=k\ddot{J_{x}}(0)$, 
$\dddot{J_{y}}(0)=k\ddot{J_{y}}(0)$.
Since 
\begin{align*}
J^{(4)}
&=J_{x}x^{(4)} +J_{y}y^{(4)}+3\ddot{J_{x}}\ddot{x}+3\ddot{J_{y}}\ddot{y},\\
J^{(5)}
&=J_{x}x^{(5)}+6\ddot{J_{x}}\dddot{x}
+4\dddot{J_{x}}\ddot{x}+J_{y}y^{(5)}+6\ddot{J_{y}}\dddot{y}+4\dddot{J_{y}}\ddot{y}
\end{align*}
hold at $t=0$, we have
$$
 J^{(5)}(0) -\frac{10}{3}kJ^{(4)}(0)=l\ddot{J}(0).
$$
Hence, by \eqref{eq:Ga-6} and \eqref{eq:Ga-7},  
\begin{align*}
  \Gamma^{(7)}-7k\Gamma^{(6)}-\left(7l-\frac{70}{3}k^3\right)\Gamma^{(4)}
  =J(0)\biggl( \ga^{(7)}-7k\ga^{(6)}-\left(7l-\frac{70}{3}k^3\right)\ga^{(4)} \biggr)
\end{align*}
holds at $t=0$.
By the condition \eqref{eq:27-2}, we obtain 
\begin{align*}
&\det\left(\ddot{\Gamma}(0) , \Gamma^{(7)}(0)-7k\Gamma^{(6)}(0)-\left(7l-\frac{70}{3}k^3\right)\Gamma^{(4)}(0)\right)  \\
&\hspace{5mm}
=\det(J(0))\det\left(\ddot{\ga}(0) , \ga^{(7)}(0)-7k\ga^{(6)}(0)-\left(7l-\frac{70}{3}k^3\right)\ga^{(4)}(0)\right) \ne 0.
\end{align*}
This completes the proof.
\end{proof}

We use the following fact to prove
Theorem \ref{thm:7/2-app}.

\begin{fact}[\cite{Ishikawa}] 
\label{lemma:2}
Let $\ga(t)$ be a $C^\infty$ curve in $\R^2$.
For a positive integer $n$,
if $\dot{\ga}(0)=0$, 
$\dddot{\ga}(0)=\cdots=\ga^{(2n)}(0)=0$ and
\begin{gather*}
 \det(\ddot{\ga}(0),\ga^{(2n+1)}(0)) \ne 0
\end{gather*}
hold,
then $\ga(t)$ has $(2,2n+1)$-cusp at $t=0$.
\end{fact}

\begin{proof}[Proof of Theorem \ref{thm:7/2-app}]
The standard $(2,7)$-cusp $\ga_{0}(t) = (t^2, t^7)$
satisfies \eqref{eq:27-1} and \eqref{eq:27-2} with $k=l=0$.
By Lemmas \ref{lem:invariant-domain} and \ref{lem:invariant-target},
we have that $\gamma(u):=\Phi\circ \ga_{0} (t(u))$ also satisfies 
\eqref{eq:27-1} and \eqref{eq:27-2}.
Conversely, 
let $\gamma(t)=(x(t), y(t))$ be a plane curve satisfying 
\eqref{eq:27-1} and \eqref{eq:27-2}.
It suffices to show that $\gamma(t)$
has $(2,7)$-cusp at $t=c$.

Without loss of generality, we may suppose that $c=0$
and $\ga(0)=\vect{0}$.
Since $\dot{\ga}(0)=\vect{0}$,
by the Hadamard's lemma (cf.\ \cite[Lemma 3.4]{BG}),
there exists a $C^{\infty}$ map $\vect{c}(t)=(a(t), b(t))$ 
such that
$$
  \ga(t)=t^2\vect{c}(t)\qquad
  \vect{c}(0)\ne (0,0)
$$
holds.
By rotating the plane, if necessary, 
we may assume that $a(0) \ne 0$.
We set $\Phi(x,y)$ as
$$
\Phi(x,y):= \left( x, y-\frac{m_1}{a(0)}x-\frac{m_2}{2a(0)^3}x^2-\frac{m_3}{24a(0)^5}x^3\right)
$$
respectively, where $m_1=b(0)$,
$m_2=\det( \vect{c}(0),\ddot{\vect{c}}(0))$,
and 
$$
m_3=a(0)^2\det( \vect{c}(0),\vect{c}^{(4)}(0))
-12(\ddot{a}(0)a(0)+\dot{a}(0)^2)\det(\vect{c}(0),\vect{c}^{(4)}(0)).
$$
If we set 
$\Gamma(t):=\Phi\circ\gamma(t)$,
then we may write
$$
\Gamma(t)=(a(t),\beta(t)),\qquad
\beta(t) = 
b(t)-\frac{m_1}{a(0)}a(t)-\frac{m_2}{2a(0)^3}a(t)^2-\frac{m_3}{24a(0)^5}a(t)^3.
$$
By a direct calculation, we have
$\beta^{(i)}(0)=0$
$(i=0,1,2,4,6)$.
Then \eqref{eq:27-1} and \eqref{eq:27-2}
yield that $\dddot{\beta}(0)=\beta^{(5)}(0)=0$ and $\beta^{(7)}(0)\ne0$.

In summary, it holds that
$\beta^{(i)}(0)=0$ $(0\le i\le 6)$ and $\beta^{(7)}(0)\ne0$.
Hence, applying the Hadamard's lemma again, 
there exists a $C^{\infty}$ function $B(t)$ 
such that
$$
  \ga(t)=(t^2a(t),t^7B(t))\qquad
  (a(0)\ne 0, B(0)\ne 0).
$$
By Fact \ref{lemma:2}, we have that 
$\gamma(t)$ has $(2,7)$-cusp at $t=0$.
\end{proof}

\section*{Statements and Declarations}

\subsubsection*{Acknowledgements}
The authors would like to 
thank Toshizumi Fukui, Yoshiki Matsushita and Masaaki Umehara
for helpful comments.
This work was supported by the Research Institute for Mathematical
Sciences, an International Joint Usage/Research Center located in Kyoto
University.

\subsubsection*{Funding}
This work was supported by 
the Japan Society for the Promotion of Science (JSPS) KAKENHI Grants 
JP19K14526 and JP20H01801.

\subsubsection*{Data availability}
Data sharing not applicable to this article as no datasets were generated or analysed during the current study.

\subsubsection*{Conflict of interest}
The authors declare that they have no conflict of interest.


\end{document}